\newtheorem{theorem}{Theorem}[section]
\newtheorem{proposition}[theorem]{Proposition}
\newtheorem{lemma}[theorem]{Lemma}
\newtheorem{definition}[theorem]{Definition}
\newtheorem*{remark}{Remark}
\newcommand{\R}{\mathbb R}
\newcommand{\eps}{\varepsilon}
\newcommand{\dd}{\, \mathrm{d}}
\newcommand{\spt}{\mbox{spt}}
\newcommand{\tr}{\mbox{tr}}
\Crefname{assumption}{Assumption}{Assumptions}
\Crefname{theorem}{Theorem}{Theorems}
\Crefname{lem}{Lemma}{Lemmas}
\Crefname{cor}{Corollary}{Corollaries}
\Crefname{prop}{Proposition}{Propositions}
\Crefname{theorem}{Theorem}{Theorems}
\Crefname{conjecture}{Conjecture}{Conjectures}
\numberwithin{equation}{section}
\title[$C^\infty$ smoothing for the Landau equation]{$C^\infty$ smoothing for weak solutions of the inhomogeneous Landau equation}
\author{Christopher Henderson}
\address{Department of Mathematics, University of Chicago, 5734 S. University Ave., Chicago, IL 60637}
\curraddr{Department of Mathematics, University of Arizona, Tucson, AZ 85721}
\email{ckhenderson@math.arizona.edu}
\author{Stanley Snelson}
\address{Department of Mathematics, University of Chicago, 5734 S. University Ave., Chicago, IL 60637}
\curraddr{Department of Mathematical Sciences, Florida Insitute of Technology, Melbourne, FL 32901}
\email{ssnelson@fit.edu}
\thanks{Both authors were partially supported by National Science Foundation grant DMS-1246999.  CH was partially supported by NSF grant DMS-1907853. SS was partially supported by a Ralph E. Powe Award from ORAU}
\begin{document}

\begin{abstract}
We consider the spatially inhomogeneous Landau equation with initial data that is bounded by a Gaussian in the velocity variable. In the case of moderately soft potentials, we show that weak solutions immediately become smooth, and remain smooth as long as the mass, energy, and entropy densities remain under control. For very soft potentials, we obtain the same conclusion with the additional assumption that a sufficiently high moment of the solution in the velocity variable remains bounded. Our proof relies on the iteration of local Schauder-type estimates.
\end{abstract}

\maketitle

\section{Introduction}

The Landau equation from plasma physics models the evolution of a particle density $f(t,x,v)\geq 0$ in phase space, see e.g. \cite{chapmancowling, lifschitzpitaevskii}. In spatial dimension $d$, the equation is given by
\begin{align}
\partial_t f + v\cdot \nabla_x f &= Q_L(f,f) := \nabla_v \cdot \left( \int_{\R^d} a(v-w)[f(w)\nabla f(v) - f(v)\nabla f(w)] \dd w \right),\label{e:main}\\
a(z) &= a_{d,\gamma}|z|^{\gamma+2}\left(I - \frac z {|z|}\otimes \frac z{|z|}\right).
\end{align}
%\begin{equation}
%\partial_t f + v\cdot \nabla_x f = Q_L(f,f) := \nabla_v \cdot \left( \overline a(t,x,v)\nabla_v f + \overline b(t,x,v) f\right).
%\end{equation}
Here, $t\in [0,T_0]$, $x\in\R^d$, $v\in\R^d$, $\gamma \geq -d$, and $a_{d,\gamma}>0$ is a physical constant. The Landau equation arises as the limit of the Boltzmann equation as grazing collisions predominate \cite{alexandre2004landau}. We are interested in both the case of \emph{moderately soft potentials}, $\gamma\in (-2,0)$ and \emph{very soft potentials}, $\gamma \in [-d,-2]$. The case $d=3, \gamma = -3$ corresponds to Coulomb interaction between particles at small scales.% The $\gamma=0$ case is known as \emph{Maxwellian molecules}, and $\gamma>0$ corresponds to \emph{hard potentials}.

As opposed to the Boltzmann collision operator, which is a purely integro-differential operator of fractional order, $Q_L$ is an operator of diffusion type whose coefficients depend nonlocally on $f$. In particular, the Landau equation \eqref{e:main} can be written in divergence form
\begin{equation}\label{e:divergence}
\partial_t f + v\cdot \nabla_x f = \nabla_v\cdot \left[\overline a(t,x,v)\nabla_v f\right] + \overline b(t,x,v)\cdot\nabla_v f + \overline c(t,x,v) f,
\end{equation}
or in nondivergence form
\begin{equation}\label{e:nondivergence}
\partial_t f + v\cdot \nabla_x f = \tr\left[\overline a(t,x,v)D_v^2 f\right] + \overline c(t,x,v) f,
\end{equation}
with the coefficients $\overline a(t,x,v)\in \R^{d\times d}$, $\overline b(t,x,v) \in \R^d$, and $\overline c(t,x,v)\in \R$ defined by
\begin{align}
\overline a(t,x,v) &:= a_{d,\gamma}\int_{\R^d} \left( I - \frac w {|w|} \otimes \frac w {|w|}\right) |w|^{\gamma + 2} f(t,x,v-w) \dd w,\label{e:a}\\
\overline b(t,x,v) &:= b_{d,\gamma}\int_{\R^d} |w|^\gamma w f(t,x,v-w)\dd w,\label{e:b}\\
\overline c(t,x,v) &:= c_{d,\gamma}\int_{\R^d} |w|^\gamma f(t,x,v-w)\dd w, \label{e:c}
\end{align}
for some constants $a_{d,\gamma}$, $b_{d,\gamma}$, and $c_{d,\gamma}$. When $\gamma = -d$, the expression for $\overline c$ must be replaced by $c_{d,\gamma} f$. We use both formulations \eqref{e:divergence} and \eqref{e:nondivergence}, which are equivalent as long as, say, $f\in C^2_{v,\rm loc}$ and $f$ has enough decay so that $\overline a$, $\overline b$, and $\overline c$ are well-defined.

 We make the following assumptions on the mass density, energy density, and entropy density:
\begin{align}
0<m_0\leq \int_{\R^d} f(t,x,v)\dd v \leq M_0,\label{e:M0}\\
\int_{\R^d} |v|^2 f(t,x,v)\dd v \leq E_0,
	\qquad \text{and}\label{e:E0}\\
\int_{\R^d} f(t,x,v) \log f(t,x,v) \dd v \leq H_0,\label{e:H0}
%\|f\|_{L^\infty(\R^d)} \leq L_0,\label{e:L0}
\end{align}
uniformly in $t\geq 0$ and $x\in\R^d$. In the spatially homogeneous case, i.e.~when $f$ is assumed to be independent of $x$, the mass and energy are conserved, and the entropy is monotonically decreasing; hence, in this case, it would suffice to assume that the initial data have finite mass, energy, and entropy. It is not currently known whether these hydrodynamic quantities stay under control for $t>0$ in the inhomogeneous case, so we include \eqref{e:M0}, \eqref{e:E0}, and \eqref{e:H0} as \emph{a priori} assumptions.

We are interested in the regularity of weak solutions to \eqref{e:main}. We use the following notion of weak solution, which is implicitly used in \cite{golse2016} and \cite{cameron2017landau}:
\begin{definition}
We say $f:[0,T_0]\times \R^d\times\R^d\to\R_+$ is a weak solution of \eqref{e:divergence} if $f$, $\nabla_v f$, $\partial_t f + v\cdot \nabla_x f \in L^2_{loc}(\R^{2d+1})$, the coefficients $\overline a$, $\overline b$, and $\overline c$ are well-defined, and 
\[\int_{\R^{2d+1}} (\partial_t f + v\cdot \nabla_x f)\phi \dd v\dd x \dd t = \int_{\R^{2d+1}} \left(- \langle \overline a \nabla_v f, \nabla_v \phi\rangle + (\overline b \cdot \nabla_v f + \overline c f)\phi\right)\dd v \dd x \dd t \]
for all $\phi \in H_0^1(\R^{2d+1})$. 
\end{definition}

Our main result states that weak solutions immediately become smooth, for any initial data that is bounded by a Gaussian and regular enough for a weak solution to exist:

%\CH{A SUGGESTION FOR AN ALTERNATE WAY TO STATE THE assumptions:  There exists $\mu_0$ such that if $f_{\rm in} \lesssim e^{-\mu|v|^2}$ then $\partial f(t,x,v) \lesssim e^{-\mu'|v|^2}$ for any $\mu' < \min\{\mu,\mu_0\}$.  This makes it totally clear that the $\mu_0$ is not a restriction on the initial data.}

\begin{theorem}\label{t:main}
Let $\gamma \in (-2,0)$, and let $f:[0,T_0]\times\R^d\times\R^d\to\R_+$ be a bounded weak solution of the Landau equation \eqref{e:main} satisfying the bounds \eqref{e:M0}, \eqref{e:E0}, and \eqref{e:H0}. There exists $\mu_0>0$ depending on $d$, $\gamma$, $m_0$, $M_0$, $E_0$, and $H_0$ such that if the initial data $f_{\rm in}$ satisfies 
\[f_{\rm in}(x,v) \leq C_0 e^{-\mu|v|^2}\] 
for some $C_0>0$ and $\mu>0$, then $f\in C^\infty((0,T_0]\times \R^d\times\R^d)$, and for any $\mu'<\min\{\mu_0,\mu\}$, any integer $j \geq 0$, and any multi-indices $\beta$ and $\eta$ with non-negative integer coordinates, the partial derivatives of $f$ satisfy the pointwise estimates
\begin{equation}\label{e:pointwise}
|\partial_t^j\partial_x^\beta\partial_v^\eta f(t,x,v)| \leq C \left(1+t^{-q}\right)e^{-\mu'|v|^2}.
\end{equation}
The constants $C, q\geq 0$ depend on $d$, $\gamma$, $m_0$, $M_0$, $E_0$, $H_0$, $\mu'$, $j$, $|\beta|$, $|\eta|$, and $C_0$. 

For $\gamma\in [-d,-2]$, if we make the additional assumption that  for all $t\in[0,T_0]$ and $x\in \R^d$,
\begin{equation}\label{e:4thmoment}
\int_{\R^d}|v|^p f(t,x,v)\dd v \leq P_0,
\end{equation}
where $p$ is the smallest integer such that $p>\dfrac{d|\gamma|}{2+\gamma+d}$, then the same conclusion holds, with all constants depending additionally on $P_0$ and $\|f\|_{L^\infty([0,T_0]\times \R^d\times \R^d)}$. If $\gamma \leq -d/2-1$, the constants also depend on $T_0$. %\CH{DO THEY THOUGH?? SHOULD CHECK}
\end{theorem}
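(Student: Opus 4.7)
The strategy combines a Gaussian upper barrier (yielding decay of $f$ and, eventually, of each of its derivatives in $v$) with iterated application of local Schauder-type estimates for kinetic Fokker--Planck equations. The first task is to establish the pointwise bound $f(t,x,v)\leq C(1+t^{-q})e^{-\mu'|v|^2}$; from this, the coefficients $\bar a$, $\bar b$, $\bar c$ defined in \eqref{e:a}--\eqref{e:c} become well-behaved, and one can run a bootstrap from $L^\infty$ to $C^\alpha$ to $C^{2,\alpha}$, iterating to $C^\infty$.

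To produce the Gaussian upper bound, I would construct an explicit supersolution of the form $\Phi(t,x,v)=A(1+t^{-q})e^{-\mu'|v|^2}$ for the nondivergence equation \eqref{e:nondivergence}. The hydrodynamic bounds \eqref{e:M0}--\eqref{e:H0} translate into uniform ellipticity of $\bar a$ of the form $c_1(1+|v|)^\gamma I \preceq \bar a(t,x,v) \preceq C_1(1+|v|)^{\gamma+2}I$, with analogous control of $\bar b$ and $\bar c$. A direct computation then shows that for $\mu'$ sufficiently small relative to these coefficient bounds, $\Phi$ is a classical supersolution of \eqref{e:nondivergence}, and a kinetic comparison principle yields $f\leq \Phi$. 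For very soft potentials $\gamma\in[-d,-2]$, the singular kernel $|w|^\gamma$ forces one to split the integrals \eqref{e:a}--\eqref{e:c} near and far from $w=0$: the $L^\infty$ bound on $f$ handles the near piece, while the moment bound \eqref{e:4thmoment} controls the far piece, which is exactly why the extra assumption is required in that regime.

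Once Gaussian decay is in hand, the next step is regularity. The De Giorgi--Nash--Moser theory for kinetic equations (Golse--Imbert--Mouhot--Vasseur, Wang--Zhang) applied to \eqref{e:divergence} gives that $f$ is locally H\"older continuous in the kinetic metric adapted to $\partial_t+v\cdot\nabla_x$. The H\"older regularity of $f$, combined with its Gaussian tail, then transfers to H\"older regularity of $\bar a, \bar b, \bar c$ via the convolution structure. A local kinetic Schauder estimate (in the spirit of Imbert--Silvestre) then promotes $f$ to $C^{2,\alpha}$ in the kinetic H\"older space. Differentiating \eqref{e:nondivergence} in $v$, $x$, or $t$ produces an equation for each derivative $\partial^k f$ with the same leading diffusion structure plus source terms built from lower derivatives of $f$ and of the coefficients; crucially, differentiating the convolutions \eqref{e:a}--\eqref{e:c} transfers the derivative onto $f$ (or onto the smooth angular factor in \eqref{e:a}), so coefficient regularity inherits from that of $f$. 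Iterating, and re-running the barrier argument at each level to maintain decay in $v$, yields \eqref{e:pointwise} and $C^\infty$ regularity.

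The principal obstacle is the nonlocal, nonlinear coupling: every Schauder step requires H\"older regularity of $\bar a, \bar b, \bar c$, which requires global information about $f$ through convolution with weights that are both singular at $w=0$ and growing at infinity. Balancing the near-singular piece (through $L^\infty$ and moment bounds, for very soft potentials) against the far piece (through Gaussian decay) uniformly across the bootstrap is the main technical work. A secondary subtlety is that the natural kinetic H\"older space does not commute cleanly with ordinary $x$-derivatives because of the transport term $v\cdot\nabla_x$, so commutators produced by differentiating must be tracked and absorbed at each stage; and propagating Gaussian decay to high-order derivatives requires re-running the barrier argument level by level, since the differentiated equations pick up new source terms whose decay must be verified anew.
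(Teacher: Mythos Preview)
Your outline captures the broad strategy---Gaussian barrier, then H\"older via De Giorgi--Nash--Moser for kinetic equations, then Schauder bootstrap---and this is indeed what the paper does. However, two key mechanisms are missing or misidentified.

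First, the ellipticity of $\overline a$ is not merely degenerate as $|v|\to\infty$; it is \emph{anisotropically} degenerate. The eigenvalue of $\overline a(t,x,v)$ in the direction parallel to $v$ behaves like $(1+|v|)^\gamma$, while in directions orthogonal to $v$ it behaves like $(1+|v|)^{\gamma+2}$ (see Proposition~\ref{p:a}). Writing $c_1(1+|v|)^\gamma I \preceq \overline a \preceq C_1(1+|v|)^{\gamma+2} I$ and applying a standard Schauder estimate would give constants depending on the ellipticity ratio $(1+|v|)^2$ in a way you do not track. The paper handles this with a specific non-isotropic linear change of variables $T$ (Lemma~\ref{l:T}) that stretches differently along and across $v_0$, yielding an equation for $f_{z_0}(z):=f(\mathcal T_{z_0}\delta_r z)$ with \emph{universal} ellipticity constants in $Q_1$. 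All estimates are then proved for $f_{z_0}$ and translated back via Proposition~\ref{p:f0-f}, with the polynomial factors in $|v_0|$ absorbed into the Gaussian. Without this device, the dependence of your Schauder constants on $|v|$ is uncontrolled.

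Second, the Gaussian decay of derivatives is \emph{not} obtained by ``re-running the barrier argument at each level.'' Derivatives of $f$ have no sign, so a comparison principle does not apply to them directly, and bounding $|\partial f|$ by a supersolution is not straightforward. In the paper, decay of derivatives falls out automatically from the Schauder estimate applied to $f_{z_0}$: since $|f_{z_0}|_{0,Q_1}\lesssim e^{-\mu'|v_0|^2}$ and the right-hand side of Theorem~\ref{t:weak-schauder} is linear in $|u|_{0,Q_1}$ (with coefficient norms growing only polynomially in $|v_0|$ by Lemma~\ref{l:coefficients}), each $[\partial f_{z_0}]_{\alpha,Q_1}$ inherits the factor $e^{-\mu'|v_0|^2}$. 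The induction then proceeds entirely at the level of these weighted local H\"older seminorms, not via repeated maximum-principle arguments. Your proposed mechanism here would not close.
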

%In the case of very soft potentials, we have a similar result with more restrictive assumptions on the solution:
%\begin{theorem}\label{t:very-soft}
%Let $\gamma \in [-d,-2)$, and let $f:[0,T]\times\R^d\times\R^d\to\R_+$ be as in Theorem \ref{t:main}. With the additional assumption that
%
%we have that $f\in C^\infty((0,T]\times \R^d\times\R^d)$ and the estimates \eqref{e:pointwise} hold, with all constants depending additionally on ???
%\end{theorem}

%\CH{IN THE PROOF WE CLAIM THAT $\mu'$ CAN BE EQUAL TO MU.  THIS SHOULDN'T BE TRUE.  NEED TO MAKE SURE THAT IS CORRECTED.  }

The question of global-in-time existence of smooth solutions to \eqref{e:main} for non-perturbative initial data remains a challenging open problem. In the case of moderately soft potentials, Theorem \ref{t:main} implies a physically meaningful continuation criterion: any loss of smoothness of $f$ can be detected at the macroscopic level by a breakdown of the bounds on the mass, energy, or entropy density.  %Our result in this paper, combined with the local existence theory (see the Related Work section below), reduces this question to 
%Note that solutions 

Our proof of Theorem \ref{t:main} relies on three elements:
\begin{enumerate}
\item[1.] The local H\"older continuity of solutions to \eqref{e:main}, which was established in \cite{wang2011ultraparabolic} and \cite{golse2016}.

\item[2.] Decay of the solution $f$ for large velocities, and corresponding decay in the local estimates, which is needed to pass regularity of $f$ to regularity of the coefficients $\overline a$ and $\overline c$ in \eqref{e:nondivergence}.

\item[3.] Local Schauder-type estimates for kinetic Fokker-Planck equations with H\"older continuous coefficients, which we prove in Section \ref{s:schauder} and apply iteratively in Section \ref{s:landau}.
\end{enumerate}
The second point is where our assumption that $f_{\rm in}$ is bounded by a Gaussian comes in. In \cite{cameron2017landau}, it was shown that this upper bound is propagated for all $t\in (0,T_0]$ when $\gamma\in (-2,0)$. We extend this to $\gamma\in [-d,-2]$ in Theorem \ref{t:gaussian}, under more restrictive assumptions; however, if we could guarantee by any other method that sufficiently high moments of the solution are finite (as in the hypotheses of \cite{chen2009smoothing} and \cite{liu2014regularization}, see below), our proof would still go through. It was shown in \cite{cameron2017landau} that solutions of \eqref{e:main} satisfying the hydrodynamic bounds \eqref{e:M0}, \eqref{e:E0}, and \eqref{e:H0} satisfy \emph{a priori} pointwise decay proportional to $(1+|v|)^{-1}$ for arbitrary initial data, but this is not strong enough for our purposes because of the slowly decaying kernels in \eqref{e:a} and \eqref{e:c}. It was also shown in \cite{cameron2017landau} that \emph{a priori} Gaussian decay cannot hold without any decay assumption on $f_{\rm in}(x,v)$.

\subsection{Related work}

In \cite{chen2009smoothing}, the authors show that classical solutions of \eqref{e:main} defined on a three-dimensional torus are $C^\infty$ in all three variables, provided that infinitely many moments of the solution and its first eight derivatives in $x$ and $v$ remain bounded uniformly in time and provided that the solution remains bounded away from vacuum. %\CH{$H^8_{x,v}$}
% of the solution are finite, and the solution is bounded away from vacuum. 
A corresponding result for solutions defined on $\R^3$ was shown in \cite{liu2014regularization}, in the case $\gamma\in [-3,-2)$. Our Theorem \ref{t:main} extends these results in the case where $f_{\rm in}$ is bounded by a Gaussian. %, but if we can ensure in some other way that sufficiently high moments of the solution are finite, our methods can still prove that $f\in C^\infty$. 
The assumptions \eqref{e:M0}, \eqref{e:E0}, and \eqref{e:H0} are much weaker than the \emph{a priori} regularity hypotheses of \cite{chen2009smoothing} and \cite{liu2014regularization}, and are defined in terms of physically relevant hydrodynamic quanitites. At least in the case $\gamma\in (-2,0)$, our estimates do not depend quantitatively on the $L^\infty$ norm of $f$. 

Local H\"older estimates for kinetic equations with rough coefficients were proven by Wang-Zhang \cite{wang2011ultraparabolic} and Golse-Imbert-Mouhot-Vasseur \cite{golse2016}, and this is the starting point for the application of our Schauder estimates. The first global regularity estimates for \eqref{e:main} in this setting (weak solutions with bounded mass, energy, and entropy) were established in \cite{cameron2017landau}. The ellipticity constants of the diffusion operator $Q_L$ degenerate as $|v|\to \infty$ in a non-isotropic way (see Appendix \ref{s:A}). To deal with this, we use a change of variables derived in \cite{cameron2017landau} to obtain an equation with universal ellipticity constants in a small cylinder (see Lemma \ref{l:T}).

Regarding the existence theory for \eqref{e:main}, global-in-time classical solutions have only been constructed in the close-to-equilibrium setting: see the work of Guo \cite{guo2002periodic} in the $x$-periodic case, and Mouhot-Neumann \cite{mouhot2006equilibrium} in the whole space. For general initial data, Villani \cite{villani1996global} constructed so-called renormalized solutions with defect measure for the Landau equation. More recently, He-Yang \cite{he2014boltzmannlandau} established the short-time existence of spatially periodic classical solutions to \eqref{e:main} in the Coulomb case ($\gamma = -d$) with initial data in a weighted $H_{x,v}^7$ space, by taking the grazing collisions limit in their estimates on the Boltzmann collision operator. They assume that the mass density of the initial data is uniformly bounded away from zero. Since this lower bound along with the bounds \eqref{e:M0}, \eqref{e:E0}, \eqref{e:H0}, and \eqref{e:4thmoment} can be shown to propagate for a short time, our Theorem \ref{t:main} combined with \cite{he2014boltzmannlandau} provides a $C^\infty$ solution to the Cauchy problem for suitable initial data. However, on physical grounds, the equation should be expected to be well-posed even with vacuum regions in the initial data. We explore this issue, as well as short-time existence for a broader range of $\gamma$, in a forthcoming paper.

%there has been a large body of work on local-in-time existence for the Boltzmann equation (see, for example, \cite{amuxy2011boltzmann} , and it is generally believed that a fixed point iteration method in the same vein can prove short-time existence of classical solutions to \eqref{e:main}. Th

%the existence theory for \eqref{e:main} is not sufficient to apply the results of \cite{chen2009smoothing} or \cite{liu2014regularization}, except in the close-to-equilibrium setting. %To the best of the author's knowledge, Theorem \ref{t:main} provides the first $C^\infty$ solution to the Cauchy

For the spatially homogeneous Landau equation, $C^\infty$ smoothing was established in \cite{desvillettes2000landau} in the case $\gamma>0$ and \cite{villani1998landau} in the $\gamma = 0$ case. For $\gamma \in (-2,0)$, the upper bounds of \cite{silvestre2015landau} also imply smoothing via parabolic regularity theory. For $\gamma\in [-d,-2]$, the result of Theorem \ref{t:main} is new even in the space homogeneous case, to the best of our knowledge.

%Our assumption that the initial data is bounded by a Gaussian is satisfied by all solutions of Landau that have been constructed so far.
%One motivation for our result is the Cauchy problem for \eqref{e:main}: what conditions on $f_{\rm in}$ ensure that a smooth solution exists on some time interval $[0,T_0]$?

\subsection{Schauder estimates}

Our main technical tools are local Schauder-type estimates for linear kinetic Fokker-Planck equations of the form
\begin{equation}\label{e:holder}
\partial_t u + v\cdot \nabla_x u = \tr(AD_v^2 u) + g,
\end{equation}
with $A$ and $g$ H\"older continuous (see Theorem \ref{t:weak-schauder} below). Schauder estimates have been established in the more general setting of ultraparabolic equations by Manfredini \cite{manfredini1997ultraparabolic},   DiFrancesco-Polidoro \cite{difrancesco2006schauder}, and Bramanti-Brandolini \cite{bramanti2007schauder}, among others. However, there are two complications involved in bootstrapping regularity estimates in this context: based on the natural scaling of the equation, Schauder estimates should be expected to bound two derivatives in $v$, one derivative in $t$, and two-thirds of a derivative in $x$ (i.e. the $\frac 2 3$-H\"older norm in $x$) of $u$, which is not enough to directly conclude $u$ is a classical solution. Even worse, Schauder estimates do not provide $C^\alpha$ estimates on $\partial_t u$, but rather on $\partial_t u + v\cdot\nabla_x u$. This is related to the non-symmetric Lie group structure of the equation, which shows up in the representation formula \eqref{e:convolution} of the solution. %(See \cite{bouchut2002hypoelliptic} for more on the connection between regularity in time, space, and velocity in kinetic equations.)
%This is related to the degeneracy of the equation: diffusion only acts in the v variable.
To get around this, we prove a second estimate that bounds $\partial_t u$ and $\nabla_x u$ in terms of the $C^{1+\alpha}$-norm of $g$. We give elementary proofs of the estimates we need, using the explicit fundamental solution for constant-coefficient equations.

\subsection{Organization of the paper} In Section \ref{s:schauder}, we prove regularity estimates for kinetic equations with H\"older continuous coefficients. In Section \ref{s:landau}, we apply these estimates iteratively to weak solutions of the Landau equation.  %Decay in the local estimates as $|v|\to\infty$ is necessary to pass regularity of $f$ to regularity of the coefficients $\overline a$ and $\overline c$ in \eqref{e:nondivergence}, because the integral kernels decay slowly. 
In Appendix \ref{s:A}, we review the bounds on the coefficients $\overline a$, $\overline b$, and $\overline c$ in \eqref{e:nondivergence}.

\subsection{Notation} 

We let $z=(t,x,v)$ denote a point in $\R_+\times \R^d\times \R^d$. For any $z_0=(t_0,x_0,v_0)$, define the Galilean transformation
\[\mathcal S_{z_0}(t,x,v) := (t_0+t, x_0 + x +tv_0,v_0+v).\]
We also have
\[\mathcal S_{z_0}^{-1}(t,x,v) := (t-t_0, x - x_0 -(t-t_0)v_0,v-v_0).\]
For $r>0$, define the scaling $\delta_r$ by 
\[\delta_r(t,x,v) = (r^2t,r^3x,rv).\]
The class of equations of the form \eqref{e:holder} is invariant under $S_{z_0}$ and $\delta_r$. We also define the quasimetric
\[\rho(z,z') := \|\mathcal S_{z}^{-1} z'\| = |t'-t|^{1/2} + |x'-x - (t'-t)v|^{1/3} + |v'-v|,\]
where
\[\|z-z'\| := |t-t'|^{1/2} + |x-x'|^{1/3} + |v-v'|.\]
For any $r>0$ and $z_0 = (t_0,x_0,v_0)$, let
\[ Q_r(z_0) := %\{z: \rho(z_0,z) < r\} = 
	(t_0-r^2,t_0] \times \{x : |x-x_0 - (t-t_0) v_0| < r^3 \}\times B_r(v_0),  \]
and $Q_r = Q_r(0,0,0)$.

We say a constant is universal if it depends only on $\gamma$, $d$, $m_0$, $M_0$, $E_0$, and $H_0$ when $\gamma \in (-2,0)$. When $\gamma\in [-d,-2]$, we also allow universal constants to depend on $P_0$ and $\|f\|_{L^\infty([0,T_0]\times \R^d\times \R^d)}$. The notation $A\lesssim B$ means that $A\leq CB$ for a constant $C$ that depends on the quantities listed in the statement of the given lemma or theorem, and $A\approx B$ means that $A\lesssim B$ and $B\lesssim A$.  %\CH{We, in general, follow this notation; however, in some places the role of the constants becomes important.  In this case where will denote by $C$ any universal constant which may change line-by-line}. {\color{blue} I'm not sure about this comment, since usually when we put a $C$ we specify what it depends on, so it's not necessarily universal. But I don't think any explanation is necessary here.} \CH{This is a good point and we should discuss this.  Maybe we should not worry about including dependence on $T_0$?  We should talk about this}

%\begin{theorem}\label{t:local-holder}
%Let $g$ be a weak solution of
%\begin{equation}\label{e:FP2}
%\partial_t g + v\cdot \nabla_x g = \nabla_v \cdot(A\nabla_v g) + B \cdot\nabla_v g + s
%\end{equation}
%in $Q_1$, with $\lambda I \leq A \leq \Lambda I$, $|B|\leq \Lambda$, and $s\in L^\infty(Q_1)$. Then $g$ is H\"older continuous with respect to $(t,x,v)$ in $Q_{1/2}$, and
%\[\frac{|g(z) - g(z)|}{|t-t'|^{\beta/2}+|x-x'|^{\beta/3}+|v+v'|^\beta} \leq C(\|g\|_{L^2(Q_1)} + \|s\|_{L^\infty(Q_1)}),\]
%for all $z,z'\in Q_{1/2}$, where $\beta$ and $C$ depend on $d$, $\lambda$, and $\Lambda$.
%\end{theorem}

\section{Schauder estimates for linear kinetic equations}\label{s:schauder}

In this section, we obtain regularity estimates for equations of the form \eqref{e:holder}. We begin by defining H\"older norms and semi-norms that correspond to $\rho$.
\begin{definition} Let $Q\subseteq \R^{2d+1}$. For $u:Q\to \R$, define
\begin{align*}
[u]_{\alpha,Q} &:= \sup_{\substack{z,z'\in Q,\\ z\neq z'}} \frac{|u(z) - u(z')|}{\rho(z,z')^\alpha}\\
[u]_{\alpha,x,Q} &:= \sup_{\substack{(t,x,v),(t,x',v)\in Q,\\ x\neq x'}} \frac{|u(t,x,v) - u(t,x',v)|}{|x-x'|^{\alpha}},\\
[u]_{\alpha,t,Q} &:= \sup_{\substack{(t,x,v),(t',x,v)\in Q,\\ t\neq t'}}\frac{|u(t,x,v) - u(t',x,v)|}{|t-t'|^{\alpha}+|(t'-t)v|^{2\alpha/3}}\\
|u|_{0,Q} &:= \sup_{z\in Q} |u(z)|\\
|u|_{\alpha,Q} &:= |u|_{0,Q} + [u]_{\alpha,Q}\\
[u]_{1+\alpha,Q} &:= [\nabla_v u]_{\alpha,Q_1} + [u]_{(1+\alpha)/2,t,Q} + [u]_{(1+\alpha)/3,x,Q}\\
|u|_{1+\alpha,Q} &:= |u|_{0,Q} + |\nabla_v u|_{0,Q} + [u]_{1+\alpha,Q}\\
[u]_{2+\alpha,Q} &:= [D_v^2 u]_{\alpha,Q} + \left[\partial_t u\right]_{\alpha,Q} + [u]_{(2+\alpha)/3,x,Q}\\
|u|_{2+\alpha,Q} &:= |u|_{0,Q} + |\partial_t u|_{0,Q} + |\nabla_v u|_{0,Q} + |D_v^2 u|_{0,Q} + [u]_{2+\alpha,Q}.
\end{align*}
For $\beta\in (0,3)$, if $|u|_{\beta,Q} <\infty$, we say $u\in C^{\beta}(Q)$. %The spaces $C_t^\alpha C_x^\beta C_v^\delta(Q)$ are defined in the natural way.
\end{definition}
If $u$ is in $C^\alpha(Q)$ by this definition, then in particular, $u$ is $\frac \alpha 3$-H\"older continuous in the Euclidean metric on $\R^{2d+1}$. We use the following lemma repeatedly:
\begin{lemma}[Interpolation Inequalities]\label{l:interp}
Let $Q = Q_r(z_0)$ for some $z_0\in \R^{2d+1}$ and $r>0$, and let $u\in C^{2+\alpha}(Q)$. There exists a constant $C$, depending only on the dimension, such that for any $\eps>0$,
\begin{align*}
[u]_{\alpha,Q} &\leq \eps^{2}[u]_{2+\alpha,Q} + C\eps^{-\alpha}|u|_{0,Q},\\
|\partial_t u|_{0,Q} &\leq \eps^\alpha [\partial_t u]_{\alpha,Q} + C\eps^{-2}|u|_{0,Q},\\
|\nabla_v u|_{0,Q} &\leq \eps^{1+\alpha}[u]_{2+\alpha,Q} + C\eps^{-1}|u|_{0,Q},\\
[\nabla_v u]_{\alpha,Q} &\leq \eps [u]_{2+\alpha,Q} + C\eps^{-(1+\alpha)}|u|_{0,Q},\\
|D_v^2 u|_{0,Q} &\leq \eps^\alpha [u]_{2+\alpha,Q} + C\eps^{-2}|u|_{0,Q}.
\end{align*}
If $D_v^3 u, \nabla_x u\in C^{\alpha}(Q)$, we also have
\begin{align*}
|D_v^3 u|_{0,Q} &\leq \eps^\alpha[D_v^3 u]_{\alpha,Q} + C\eps^{-3}|u|_{0,Q}\\
|\nabla_x u|_{0,Q} &\leq \eps^{\alpha} [\nabla_x u]_{\alpha,Q} + C\eps^{-3}|u|_{0,Q}.
\end{align*}
%{\color{blue} Need to add and prove that $[\nabla_x^k \nabla_v^\ell u]_{\alpha',Q} \leq C_?[\nabla_x^k \nabla_v^\ell u]_{\alpha,Q} + C_? |u|_{0,Q}$ for $\alpha > \alpha'$} 
\end{lemma}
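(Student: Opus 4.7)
The plan is to prove each of the seven inequalities by the classical interpolation scheme: write the quantity on the left side as a finite difference (or combination of finite differences) of $u$, plus a remainder involving the top-order Hölder semi-norm, then optimize over the step size $h$. Each inequality is anisotropic in the sense that a step $h$ in the $v$-, $x$-, or $t$-direction contributes $\rho$-distance $h$, $h^{1/3}$, or $h^{1/2}$ respectively, which dictates the exponent of $\eps$ in the final bound. By the invariance of the norms under $\mathcal{S}_{z_0}$ and $\delta_r$, I would reduce to the case $Q = Q_1$ centered at the origin, so that $|v| \leq 1$ throughout the cylinder.

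The zeroth-order bounds on $v$-derivatives (3), (5), (6) are direct. For (5), a symmetric second-order $v$-difference yields $h^2 \partial_{v_i}^2 u(z_0) = u(z_0+he_i) - 2u(z_0) + u(z_0-he_i) + O(h^{2+\alpha}[D_v^2 u]_\alpha)$, giving $|D_v^2 u|_0 \lesssim h^{-2}|u|_0 + h^\alpha [u]_{2+\alpha}$; pick $h=\eps$. For (6), a third-order symmetric difference in $v$ with $h=\eps$ works similarly. For (3), a first-order Taylor in $v$ gives $|\nabla_v u|_0 \lesssim h^{-1}|u|_0 + h |D_v^2 u|_0$, after which invoking (5) with the same $\eps$ produces the stated form. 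For (7), a single $x$-difference at step $h$ gives $|\nabla_x u|_0 \lesssim h^{-1}|u|_0 + h^{\alpha/3}[\nabla_x u]_\alpha$ (using $\rho = h^{1/3}$ along the $x$-direction), and the choice $h=\eps^3$ yields the stated bound.

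The $\partial_t u$ bound (2) is the most delicate: the $\rho$-distance from a pure $t$-perturbation of size $h$ is $h^{1/2} + (h|v|)^{1/3}$, which on $Q_1$ is dominated by the $h^{1/3}$ term unless $|v| \leq h^{1/2}$. To recover the cleaner $h^{1/2}$ scaling uniformly, I would difference along kinetic characteristics and combine with a pure $x$-difference to isolate $\partial_t u$: for instance, writing $h \partial_t u(z_0) \approx [u(z_0 + (h, hv_0, 0)) - u(z_0)] - [u(z_0 + (0, hv_0, 0)) - u(z_0)]$, so that the $t$-derivative is isolated along the characteristic direction (where the $\rho$-distance is just $h^{1/2}$), and absorbing the remaining $x$-increment using $[u]_{(2+\alpha)/3, x, Q} \leq [u]_{2+\alpha,Q}$.

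For the Hölder seminorm bounds (1) and (4), I would split into cases $\rho(z_1, z_2) \leq \eps$ and $\rho(z_1,z_2) > \eps$. In the large-$\rho$ case, the trivial $|u|_0$ (resp.\ $|\nabla_v u|_0$, handled by (3)) bound suffices after dividing by $\rho^\alpha \geq \eps^\alpha$. In the small-$\rho$ case, I would connect $z_1$ and $z_2$ by a piecewise path in $t$, $x$, and $v$, Taylor-expand on each segment, and bound the resulting errors using the $|\cdot|_0$ interpolation inequalities already established together with the direct $x$-Hölder control $[u]_{(2+\alpha)/3, x}$ on the $x$-segment. The main obstacle throughout is bookkeeping the anisotropic exponents and managing the $v$-dependence of the $t$-direction $\rho$-distance, especially in inequality (2); once the finite-difference formulas are set up and the scaling normalization to $Q_1$ is in place, the optimizations are routine.
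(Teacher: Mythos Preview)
Your overall scheme---finite differences plus optimization in the step size, with segment-by-segment estimates for the H\"older bounds---is exactly the standard method the paper invokes, and for inequalities (3), (5), (6), (7), and the splitting argument for (1) and (4), your outline is correct and essentially identical to what the references (Manfredini, Krylov) do. The paper's own ``proof'' is just a two-sentence pointer to this method.

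There is, however, a genuine gap in your treatment of inequality (2). You correctly identify that a pure $t$-step of size $h$ at a point with velocity $v_0$ produces $\rho$-distance $h^{1/2} + (h|v_0|)^{1/3}$, and that on $Q_1$ the second term dominates for small $h$. But your proposed fix does not repair this. Writing
\[
h\,\partial_t u(z_0) \approx \bigl[u(t_0+h,x_0+hv_0,v_0) - u(z_0)\bigr] - \bigl[u(t_0,x_0+hv_0,v_0) - u(z_0)\bigr]
\]
amounts to computing $u(t_0+h,x_0+hv_0,v_0) - u(t_0,x_0+hv_0,v_0) = \int_0^h \partial_t u(t_0+s,x_0+hv_0,v_0)\,ds$, and the remainder $\int_0^h [\partial_t u(t_0+s,x_0+hv_0,v_0) - \partial_t u(z_0)]\,ds$ is still controlled by $[\partial_t u]_{\alpha}$ times $\rho$-distances of the form $s^{1/2} + ((h-s)|v_0|)^{1/3} \lesssim h^{1/3}$. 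So you recover only $|\partial_t u|_0 \lesssim h^{-1}|u|_0 + h^{\alpha/3}[\partial_t u]_\alpha$, which after optimization gives $\eps^{-3}|u|_0$, not $\eps^{-2}|u|_0$. Your suggestion to ``absorb the remaining $x$-increment using $[u]_{(2+\alpha)/3,x}$'' does not help either: that seminorm is not on the right-hand side of (2), and in any case the $x$-increment of $u$ is of order $h^{(2+\alpha)/3}$, which after dividing by $h$ contributes a \emph{negative} power $h^{(\alpha-1)/3}$.

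The remedy is the one the paper actually sketches: scale by $\delta_\eps$ rather than $\delta_r$, i.e.\ reduce to the case $\eps = 1$ (on a cylinder whose radius depends on $\eps$) rather than to the case $Q = Q_1$. After applying $\delta_\eps$, the cylinder has $|v| \le \eps$, the natural $t$-step is $h \sim \eps^2$, and then $(h|v|)^{1/3} \lesssim (\eps^3)^{1/3} = \eps = h^{1/2}$, so the two contributions to the $t$-direction $\rho$ are automatically balanced. In other words, proving the $\eps=1$ case on $Q_1$ (where the step $h\sim 1$ makes both $h^{1/2}$ and $(h|v|)^{1/3}$ of order one) and then scaling gives the sharp exponent directly, with no need for the characteristic-path correction.
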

%\begin{proof}
The method of proving inequalities of this type is standard. (See, for example, \cite{manfredini1997ultraparabolic} or \cite[Theorem~8.8.1]{krylov_holder}). Briefly, it suffices to prove the case $\eps = 1$ by scaling. To prove the first inequality, one estimates $|u(z) - u(z')|$ by writing $z-z'$ as a sum of segments parallel to the coordinate axes, and applying the mean value inequality. The details are omitted. 
%{\color{blue} Should we prove one in detail to indicate the idea?} \CH{Nah...  I think it is ``classical'' enough that we don't need to do it.  If a referee complains we can add it in later...}
%\end{proof}
%We will also implicitly use the following simple fact: $[fg]_{\alpha,Q_1} \leq |f|_{0,Q_1}[g]_{\alpha,Q_1} + [f]_{\alpha,Q_1}|g|_{0,Q_1}$.

Finally, we define the non-scale-invariant H\"older seminorms that correspond to our regularity estimates:

\begin{definition}\label{d:prime}
For $Q \subseteq \R^{2d+1}$, $u:Q\to \R$, and $\alpha,\beta\in (0,1)$, define
\begin{align*}
 [u]_{2+\alpha,\beta,Q}' :&= [D_v^2 u]_{\alpha,Q} + [u]_{(2+\alpha)/3,x,Q} + [u]_{\beta,t,Q},\\
[u]_{3+\alpha,Q}'' &:= [\partial_t u]_{\alpha,Q} + [\nabla_x u]_{\alpha,Q} + [D_v^3 u]_{\alpha,Q}.
 \end{align*}
\end{definition}

\subsection{Constant coefficients} 
Consider the equation
\begin{equation}\label{e:simple}
u_t + v\cdot \nabla_x u - \Delta_v u = g,
\end{equation}
in $(-1,0]\times \R^d\times \R^d $ %$Q_1$
with zero initial data at $t=-1$. The explicit fundamental solution for this equation is given by
\begin{equation}\label{e:Gamma}
\Gamma(z) := \begin{cases}\dfrac {C_d}{t^{2d}} \exp\left(-\dfrac{|v|^2}{t} - \dfrac {3 v\cdot x}{t^2} - \dfrac {3|x|^2}{t^3}\right), &t>0,\\
0, &t\leq 0,\end{cases}
\end{equation}
where $C_d = (\sqrt{3} / (2\pi))^d$. More precisely, if $g$ is, say, continuous, bounded, and has support contained in $\{t > -1\}$ then \eqref{e:simple} is uniquely solved by
\begin{equation}\label{e:convolution}
u(z) = \int_{\R^{2d+1}} \Gamma\left(\mathcal S_{\zeta}^{-1} z\right) g(\zeta) \dd \zeta,
\end{equation}
where $\zeta = (s,y,w)$ and $\mathcal S_{\zeta}^{-1} z = (t-s,x-y-(t-s)w,v-w)$. The fundamental solution $\Gamma$ is a special case of the solution constructed by H\"ormander \cite{hormander1967} for more general hypoelliptic equations. (See also \cite{lanconelli1994evolution, manfredini1997ultraparabolic}.) The following lemma provides a useful characterization of the homogeneity of the fundamental solution:

\begin{lemma}\label{l:Gamma-estimates}
For any partial derivative $\partial_t^j \partial^\beta_x\partial^\eta_v \Gamma$ of $\Gamma$, with $\beta$ a multi-index of order $k\geq 0$ and $\eta$ a multi-index of order $\ell\geq 0$, there exists a constant $C = C(d,j,k,\ell)$ such that for all $t>0$ and $p,q\geq 0$,% and any $\xi = (\xi_1,\xi_2,\xi_3)\in [0,1]\times \R^d \times \R^d$, 
\[\int_{\R^d}\int_{\R^d}|\partial_t^j \partial^\beta_x\partial^\eta_v \Gamma(t+\xi_1,y+\xi_2,w+\xi_3)||y|^p |w|^q \dd w \dd y \leq C t^{-(\ell/2 + j + 3k/2)+3p/2 + q/2}.  \]
Further, if $\xi \in [0,1]\times \R^d \times \R^d$ and $\|\xi\| \leq t^{1/2}/2$, then
\[
	\int_{\R^d}\int_{\R^d}|\partial_t^j\partial^\beta_x\partial^\eta_v \Gamma(z + \xi)||y|^p |w|^q \dd w \dd y
		\leq C t^{-(\ell/2 + j + 3k/2) + 3p/2 + q/2},
\]
where $z = (t,x,v)$.
\end{lemma}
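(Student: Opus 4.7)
The fundamental observation is that $\Gamma$ is homogeneous with respect to the kinetic dilation $\delta_r$: the quadratic form in the exponent is invariant under $(t,x,v)\mapsto(r^2t,r^3x,rv)$, and the prefactor $t^{-2d}$ transforms by $r^{-4d}$. Combined with the chain-rule factors $r^{-2j}$, $r^{-3k}$, $r^{-\ell}$ coming from differentiation, this yields the identity
\[
(\partial_t^j\partial_x^\beta\partial_v^\eta\Gamma)(\delta_r z) \;=\; r^{-4d-2j-3k-\ell}\,(\partial_t^j\partial_x^\beta\partial_v^\eta\Gamma)(z),
\]
which is the engine of the whole lemma.

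\textbf{First inequality (unshifted case, $\xi=0$).} Denote $F=\partial_t^j\partial_x^\beta\partial_v^\eta\Gamma$ and change variables by setting $y=t^{3/2}y'$, $w=t^{1/2}w'$. The Jacobian contributes $t^{2d}$, the weights contribute $t^{3p/2}|y'|^p\cdot t^{q/2}|w'|^q$, and the scaling identity above (with $r=t^{1/2}$) turns $F(t,y,w)$ into $t^{-2d-j-3k/2-\ell/2}F(1,y',w')$. The $t^{2d}$ and $t^{-2d}$ cancel, leaving exactly the claimed power of $t$ multiplied by the constant
\[
I(1) \;:=\; \int_{\R^d}\int_{\R^d}|F(1,y',w')|\,|y'|^p|w'|^q\dd w'\dd y'.
\]
To see that $I(1)<\infty$, I note that at $t=1$ one can complete the square,
\[
|w|^2+3w\cdot y+3|y|^2 \;=\; \bigl|w+\tfrac{3}{2}y\bigr|^2+\tfrac{3}{4}|y|^2,
\]
so $\Gamma(1,\cdot,\cdot)$ decays like a non-degenerate Gaussian in $(y,w)$. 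Each derivative of $\Gamma$ at $t=1$ produces polynomial factors in $y$ and $w$ multiplying that same Gaussian, hence $I(1)$ is finite by standard Gaussian integration (with a constant depending only on $d,j,k,\ell$).

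\textbf{Second inequality (shifted case, $\|\xi\|\le t^{1/2}/2$).} The constraint $\|\xi\|\le t^{1/2}/2$ unpacks to $|\xi_1|\le t/4$, $|\xi_2|\le t^{3/2}/8$, $|\xi_3|\le t^{1/2}/2$; in particular $\tilde t:=t+\xi_1\in[t,5t/4]$, so $\tilde t\approx t$. I translate the integration variables by $y'=y+\xi_2$, $w'=w+\xi_3$, which transfers the shift from the argument of $\Gamma$ onto the weights. Using the elementary bound $|y'-\xi_2|^p\le C_p(|y'|^p+|\xi_2|^p)$ and its analogue for $w$, the integral splits into four terms of the form
\[
|\xi_2|^{p-p_1}|\xi_3|^{q-q_1}\int\int|F(\tilde t,y',w')|\,|y'|^{p_1}|w'|^{q_1}\dd w'\dd y',
\]
with $p_1\in\{0,p\}$ and $q_1\in\{0,q\}$. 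Applying the unshifted estimate to each term and inserting $|\xi_2|\lesssim t^{3/2}$, $|\xi_3|\lesssim t^{1/2}$, every piece produces the same power $t^{-(\ell/2+j+3k/2)+3p/2+q/2}$, as desired.

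\textbf{Anticipated difficulty.} No single step is deep; the only place that requires care is bookkeeping the exponents in the scaling calculation and confirming that the quadratic form in the exponent of $\Gamma$ is positive definite at $t=1$, which guarantees finiteness of $I(1)$ (and, by the same token, of all the constants hidden in the polynomial-times-Gaussian derivative expressions).
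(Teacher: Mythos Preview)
Your proof is correct and follows essentially the same approach as the paper: both exploit the kinetic-scaling homogeneity of $\Gamma$ and reduce via the change of variables $y=t^{3/2}y'$, $w=t^{1/2}w'$ to a fixed-time Gaussian integral. The only cosmetic differences are that the paper writes the derivative explicitly as a homogeneous polynomial times $\Gamma$ (rather than invoking the scaling identity for $F$ directly), and for the shifted estimate the paper simply remarks that $\tilde t\approx t$ makes the argument ``almost identical,'' whereas you spell out the translation-and-split step on the weights.
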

\begin{proof}
It is straightforward to show by induction that every partial derivative of $\Gamma$ can be written 
\[\partial_t^j \partial^\beta_x\partial^\eta_v \Gamma(t,y,w) = P_{j,\beta,\eta}\left(\frac 1 {t^{1/2}} , \frac {y_1} {t^2}, \ldots,\frac {y_d}{t^2},\frac {w_1} {t}, \ldots,\frac {w_d}{t} \right)\Gamma(t,y,w),\]
with $P_{j,\beta,\eta}$ a homogeneous polynomial where each term is of degree exactly $\ell + 2j + 3k$. Since $\exp( -|w|^2/t - 3 w\cdot y / t^2 - 3 |y|^2 / t^3) \leq \exp(- |w|^2/(16 t) - 3|y|^2/(5t^3))$, formula \eqref{e:Gamma} for $\Gamma$ implies
\begin{align*}
\int_{\R^d}\int_{\R^d}  |\partial_t^j \partial^\beta_x\partial^\eta_v&\Gamma(t,y,w)||y|^p |w|^q  \dd w \dd y\\
& =  \frac {C_d} {t^{2d}} \int_{\R^d} \int_{\R^d} \left| P_{j,\beta,\eta}\left(\frac 1 {t^{1/2}} , \frac {y_1} {t^2}, \ldots,\frac {y_d}{t^2},\frac {w_1} {t}, \ldots,\frac {w_d}{t} \right)\Gamma(t,y,w)\right||y|^p |w|^q\dd w \dd y\\
&\leq C_d  \int_{\R^d} \int_{\R^d} \left|P_{j,\beta,\eta}\left(\frac 1 {t^{1/2}}, \frac {y_1} {t^{1/2}}, \ldots,\frac {y_d}{t^{1/2}},\frac {w_1} {t^{1/2}}, \ldots,\frac {w_d}{t^{1/2}} \right)\right.\\
&\quad \qquad \qquad\qquad\left. \times \exp\left( -\dfrac{|\overline w|^2}{16} - \dfrac {3|\overline y|^2}{5 }\right)\right| t^{3p/2+q/2}|\overline y|^p |\overline w|^q \dd \overline w \dd \overline y\\
&\lesssim  \left(\frac 1 {t^{1/2}}\right)^{\ell + 2j + 3k} t^{3p/2+q/2},
\end{align*}
where $\overline w = w/t^{1/2}$ and $\overline y = y/t^{3/2}$.
The proof of the second claim is almost identical, using the fact that $t\lesssim t + \xi_t \lesssim t$, where $\xi := (\xi_t, \xi_x, \xi_v)$.
%To finish the bound on $I_2''$, we estimate the $\Gamma$ terms above.  We show only how to estimate ther term $h^2 |\partial_t \partial_{v_iv_j}\Gamma(\zeta + \xi)|$ as the other estimates follow similarly.  As in Lemma \ref{l:Gamma-estimates}, we write
%\[
%	\partial_t\partial_{v_iv_j}\Gamma(\zeta+\xi)
%		= P\left(\frac{1}{(s+\xi_t)^{1/2}}, \frac{y+\xi_x}{(s+\xi_t)^2}, \frac{w + \xi_v}{s + \xi_t}\right) \Gamma(\zeta + \xi),
%\]
%where $P$ is a homogeneous polynomial of degree $4$.  Since $s \geq 2h^2$ and $\xi_t \leq h^2$, we have that
%\[
%	\partial_t\partial_{v_iv_j}\Gamma(\zeta+\xi)
%		\lesssim P\left(\frac{1}{s^{1/2}}, \frac{y+\xi_x}{s^2}, \frac{w + \xi_v}{s}\right) \Gamma(\zeta + \xi),
%\]
\end{proof}

%\begin{lemma}\label{l:convolution}
%If $u$ is smooth and has compact support in $Q_1$, then for any $\alpha\in (0,1)$, there exists $C = C(d,\alpha)$ such that
%\begin{align*}
%[u]_{2+\alpha',Q_1} &\leq C [g]_{\alpha,Q_1},
%\end{align*}
%for any $\alpha'\in (0,\alpha)$, where $g = \partial_t u + v\cdot \nabla_x u - \Delta_v u$.
%\end{lemma}

We now prove our main regularity estimates in the constant-coefficient case:
\begin{lemma}\label{l:convolution}
Suppose that $g \in C^\alpha(Q_1)$ has compact support in $Q_1$, for some $\alpha\in(0,1)$. Then the solution $u$ of \eqref{e:simple} in $Q_1$ satisfies
%\[\begin{split}
%	u(z) &:= (\Gamma \ast g)(z)
%		= \int_{-1}^t \int_{\R^d}\int_{\R^d} \Gamma(t-s,x-y-(t-s)w,v-w)g(s,y,w)\dd w \dd y \dd s
%	u_2(z) &:= (\Gamma \ast \nabla_v g)(z) 
%		= -\int_{-1}^t \int_{\R^d}\int_{\R^d} \nabla_v \Gamma(t-s,x-y-(t-s)w,v-w) g(s,y,w)\dd w \dd y \dd s
%\end{split}\]
%{\color{blue} Not sure if $u_2$ still makes sense?}
%satisfies the estimate
\[\begin{split}
	[D_v^2u]_{\alpha,Q_1} + [u]_{(2+\alpha)/3,x,Q_1}
		&\lesssim [g]_{\alpha,Q_1},
%	[u_2]_{1+\alpha,Q_1}
%		&\leq C [g]_{\alpha,Q_1},
\end{split}\]
where the implied constant depends only on $\alpha$ and the dimension $d$. We also have $[u]_{\beta,t,Q_1}\lesssim [g]_{\alpha,Q_1}$ for any $\beta\in (0,1)$, so that 
\[[u]_{2+\alpha,\beta,Q_1}' \lesssim [g]_{\alpha,Q_1}, \]
with $[\cdot]_{2+\alpha,\beta,Q_1}$ as in Definition \ref{d:prime}. In particular, $[u]_{1+\beta,Q_1} \lesssim [g]_{\alpha,Q_1}$
for any $\beta\in (0,1)$.
  % {\color{red} THE LAST TWO ESTIMATES, THE SECOND OF WHICH IS CRUCIAL, WILL CHANGE IF THERE IS DEPENDENCE ON V ON THE HOLDER COEFFICIENTS}
%If $g\in C^{\alpha}(Q_1)$ has compact support, then the convolution 
%\[u(z) := (\Gamma \ast g)(z) = \int_{-1}^t \int_{\R^d}\int_{\R^d} \Gamma(t-s,x-y,v-w)g(s,y,w)\dd w \dd y \dd s\]
%satisfies the estimate 
%\begin{align*}
%[u]_{2+\alpha',Q_1} + [\nabla_v u]_{1+\alpha',Q_1} &\leq C [g]_{\alpha,Q_1},
%\end{align*}
%for any $\alpha\in (0,1)$ and $\alpha'\in (0,\alpha)$.
\end{lemma}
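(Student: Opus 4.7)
The plan is to work directly from the convolution representation \eqref{e:convolution}, differentiate under the integral sign, and exploit cancellation properties of $\Gamma$ in conjunction with Lemma \ref{l:Gamma-estimates}. A useful preliminary observation is that since $g$ is compactly supported in $Q_1$, we may extend $g$ by zero to all of $\R^{2d+1}$ without changing $u$ on $Q_1$, and moreover $g$ must vanish somewhere in $Q_1$, so the H\"older bound yields $|g|_{0,Q_1} \lesssim [g]_{\alpha,Q_1}$. Since $\int \Gamma(t,x,v)\dd x\dd v = 1$ for every $t > 0$, a direct substitution (changing variables $w' = v-w$, $y' = x - y - (t-s)w$) shows that for each $s < t$ the inner integral $\int \Gamma(\mathcal S_\zeta^{-1}z)\dd y\dd w$ is constant in $z$; differentiating in $v$ or $x$ under the substitution then gives the cancellation identities
\[
\int D_v^k\Gamma(\mathcal S_\zeta^{-1} z)\dd\zeta = 0 \quad \text{and} \quad \int \partial_x^k \Gamma(\mathcal S_\zeta^{-1} z)\dd\zeta = 0 \quad \text{for every } k\geq 1,
\]
while $\int[\Gamma(\mathcal S_\zeta^{-1} z_1)-\Gamma(\mathcal S_\zeta^{-1} z_2)]\dd\zeta = t_1-t_2$. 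These identities drive the argument.

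For the $[D_v^2 u]_{\alpha,Q_1}$ estimate, fix $z_1,z_2\in Q_1$ and set $r=\rho(z_1,z_2)$. The cancellation lets us write
\[
D_v^2 u(z_i) = \int D_v^2\Gamma(\mathcal S_\zeta^{-1} z_i)[g(\zeta)-g(z_i)]\dd\zeta,
\]
and we split the integral for $D_v^2 u(z_1)-D_v^2 u(z_2)$ into a near region $\{\rho(\zeta,z_1)\le Kr\}$ and its complement. In the near region, apply $|g(\zeta)-g(z_i)|\le [g]_\alpha\rho(\zeta,z_i)^\alpha$ and bound $\int |D_v^2\Gamma(\mathcal S_\zeta^{-1}z_i)|\rho(\zeta,z_i)^\alpha\dd\zeta$ via Lemma \ref{l:Gamma-estimates}, which produces a term of order $r^\alpha[g]_\alpha$. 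In the far region (where $\rho(\zeta,z_1)\approx \rho(\zeta,z_2)\gtrsim r$), rewrite $D_v^2\Gamma(\mathcal S_\zeta^{-1}z_1)-D_v^2\Gamma(\mathcal S_\zeta^{-1}z_2)$ as a telescoping sum along a three-segment path from $z_1$ to $z_2$ in which one of $t$, $x$, $v$ changes at a time; each segment contributes a factor matching $r$ in its coordinate together with an additional derivative on $\Gamma$, and a second application of Lemma \ref{l:Gamma-estimates} delivers the correct $r^\alpha$ scaling. The estimate $[u]_{(2+\alpha)/3,x,Q_1}\lesssim [g]_\alpha$ follows by applying the same near/far technique to $u(z_1)-u(z_2) = \int[\Gamma(\mathcal S_\zeta^{-1} z_1)-\Gamma(\mathcal S_\zeta^{-1} z_2)][g(\zeta)-g(z_1)]\dd\zeta$, where $z_1,z_2$ share the same $t$ (so the cancellation has no residue) and $r = |x_1-x_2|^{1/3}$; the telescoping now yields the target scaling $r^{2+\alpha}$. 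For $[u]_{\beta,t,Q_1}\lesssim [g]_\alpha$, the same procedure applies, except the cancellation identity contributes an additional term $(t_1-t_2)g(z_1)$ on the right-hand side, which is of size $O(|t_1-t_2|\cdot[g]_\alpha)$ and is therefore dominated by $|t_1-t_2|^\beta+|(t_1-t_2)v|^{2\beta/3}$ on $Q_1$ whenever $\beta<1$.

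Combining the three estimates yields $[u]_{2+\alpha,\beta,Q_1}'\lesssim [g]_\alpha$. The final $[u]_{1+\beta,Q_1} \lesssim [g]_\alpha$ follows by choosing the free parameter in the $t$-H\"older estimate equal to $(1+\beta)/2 \in (1/2,1)$, noting $[u]_{(1+\beta)/3,x,Q_1}\lesssim [u]_{(2+\alpha)/3,x,Q_1}$ on the bounded set $Q_1$, and bounding $[\nabla_v u]_{\beta,Q_1}$ by interpolation in each coordinate direction: in $v$ one uses $|D_v^2 u|_{0,Q_1}$ (itself controlled by $[D_v^2 u]_{\alpha,Q_1}$ and $|u|_{0,Q_1}$ via Lemma \ref{l:interp}); in $x$ one interpolates the $x$-component of $[D_v^2 u]_{\alpha,Q_1}$ with $[u]_{(2+\alpha)/3,x,Q_1}$; in $t$ one combines $[u]_{\beta',t,Q_1}$ (for a suitable $\beta'<1$) with the $t$-component of $[D_v^2 u]_{\alpha,Q_1}$. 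The main technical obstacle throughout is the non-Euclidean Galilean structure: the composition of $\mathcal S_\zeta^{-1}$ with derivatives of $\Gamma$ generates cross terms involving the shift parameters, which must be tracked carefully so that Lemma \ref{l:Gamma-estimates} yields the correct $r$-scaling in each region, and the telescoping along $z_1\to z_2$ must be performed in a way compatible with $\rho$ rather than with the Euclidean metric.
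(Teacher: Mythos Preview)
Your approach is a valid alternative to the paper's and the overall strategy is sound, but it differs structurally. The paper slices the convolution in the \emph{time} integration variable, writing $\partial_{v_iv_j}u(z)-\partial_{v_iv_j}u(z')$ as $I_1+I_2+I_3$ according to whether $s\in[0,2h^2]$, $[2h^2,1+t]$, or $[1+t,1+t']$ (with $h=\rho(z,z')$), and uses only the one-variable cancellation $\int_{\R^d}\partial_{v_iv_j}\Gamma(s,y,w)\dd w=0$. After a change of variables in $I_2$ the arguments of $\Gamma$ become the Euclidean differences $(t-s,x-y,v-w)$, so the Galilean shift is pushed entirely into the argument of $g$; the mean-value step on $\Gamma$ is then straightforward. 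Your metric near/far decomposition based on $\rho(\zeta,z_1)$ is the Campanato-style alternative: it is more symmetric and avoids the separate $I_3$ endpoint term, but the telescoping of $D_v^2\Gamma(\mathcal S_\zeta^{-1}z_1)-D_v^2\Gamma(\mathcal S_\zeta^{-1}z_2)$ must be done along a group-compatible path (as you note), since the naive Euclidean $x$-segment has length $|x_1-x_2|\sim r^2$ rather than $r^3$ and would blow up the $\nabla_x$ term.

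There is one genuine omission worth flagging. When you pass from the two subtracted representations $D_v^2u(z_i)=\int D_v^2\Gamma(\mathcal S_\zeta^{-1}z_i)[g(\zeta)-g(z_i)]\dd\zeta$ to a single far-region integral, a cross term $[g(z_2)-g(z_1)]\int_{\text{far}}D_v^2\Gamma(\mathcal S_\zeta^{-1}z_2)\dd\zeta$ appears, and you do not address it. This cannot be handled by absolute values: $\int_{\text{far}}|D_v^2\Gamma|\dd\zeta$ diverges like $\log(1/r)$ by Lemma~\ref{l:Gamma-estimates}. One must use the $w$-cancellation again to see that the \emph{signed} integral $\int_{\text{far}}D_v^2\Gamma(\mathcal S_\zeta^{-1}z_2)\dd\zeta$ equals $-\int_{\text{near}}D_v^2\Gamma$, and then that the latter (after subtracting the full-$w$ integral over $\{t''\le Cr^2\}$, which vanishes) is supported where the Gaussian tail in $(y,w)$ makes it $O(1)$ uniformly in $r$. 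With this and the group-compatible path made explicit, your argument is complete; both routes ultimately reduce to the same scaling bounds of Lemma~\ref{l:Gamma-estimates}.
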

\begin{proof}
%NEED TO WRITE SOMETHING ABOUG HOW WE CAN ALWAYS ASSUME THAT H IS LESS THAN T (OR T TO SOME POWER)
First, we estimate $[D_v^2 u]_{\alpha,Q_1}$. Since $g$ has compact support in $Q_1$, \eqref{e:convolution} implies that, for any $(t,x,v) \in Q_1$,
\begin{align*}
\partial_{v_iv_j} u(z) &= \int_{-1}^t \int_{\R^d}\int_{\R^d} \partial_{v_iv_j}\Gamma(t-s,x-y-(t-s)w,v-w) g(s,y,w)\dd w \dd y \dd s\\
&=  \int_{0}^{1+t} \int_{\R^d}\int_{\R^d}\partial_{v_iv_j}\Gamma(s,y,w) g(t-s,x-y-s(v-w),v-w)\dd w\dd y \dd s,
 \end{align*}
for $1\leq i,j\leq d$. Let $z = (t,x,v)$ and  $z' = (t',x',v')$ be fixed points in $Q_1$ with $t \leq t'$.  Further, let $h =\rho(z,z')$ and fix any $i,j\in \{1,\dots, d\}$. We write
\begin{align*}
&\partial_{v_iv_j} u(z) - \partial_{v_iv_j}u(z')\\
&= \left(\int_0^{2h^2} + \int_{2h^2}^{1+t}\right)\int_{\R^{d}}\int_{\R^d}\partial_{v_iv_j}\Gamma(s,y,w) \delta g(s,y,w) \dd w\dd y \dd s\\
&\quad - \int_{1+t}^{1+t'} \int_{\R^{d}}\int_{\R^d}\partial_{v_iv_j}\Gamma(s,y,w)g(t'-s,x'-y-s(v'-w),v'-w)\dd w\dd y \dd s\\
&=: I_1 + I_2 + I_3,
\end{align*}
where
\[\delta g(s,y,w) := g(t-s,x-y - s(v-w),v-w) - g(t'-s,x'-y-s(v'-w),v'-w).\] 
We make the convention that if $2h^2 \geq 1+t$, then $I_2=0$. 

Since $\spt(g) \subset Q_1$, we have $|\delta g(s,y,w)-\delta g(s,y,0)| \leq 2[g]_{\alpha,Q_1}((s|w|)^{\alpha/3} + |w|^{\alpha})$. Observe that for any $s>0$, $y\in \R^d$,
\[\int_{\R^d} \partial_{v_iv_j}\Gamma(s,y,w) \dd w = 0.\]
This allows us to estimate $I_1$ as follows:
\begin{align*}
|I_1| &= \left|\int_0^{2h^2}\int_{\R^{d}}\int_{\R^d} \partial_{v_iv_j}\Gamma(s,y,w) [\delta g(s,y,w)-\delta g(s,y,0)]\dd w \dd y \dd s\right|\\
&\leq 2 [g]_{\alpha,Q_1} \int_0^{2h^2} \int_{\R^{d}}\int_{\R^d}|\partial_{v_iv_j}\Gamma(s,y,w)| ((s|w|)^{\alpha/3} + |w|^{\alpha})\dd w \dd y \dd s\\
&\lesssim  [g]_{\alpha,Q_1}\int_0^{2h^2} s^{\alpha/2 - 1} \dd s \lesssim [g]_{\alpha,Q_1}h^{\alpha},
\end{align*}
where the second-to-last inequality follows from Lemma \ref{l:Gamma-estimates}. 
%&\leq C [g]_{\alpha,Q_1} \left| \int_0^{2h^2} \int_{\R^{d}}\int_{\R^d} \frac{C_d} {s^{2d}}  \left(- \frac {2}{t}\delta_{ij} + \left(\frac {2v_i} t + \frac {3x_i}{t^2}\right)\left(\frac {2v_j} t + \frac {3x_j}{t^2}\right)\right)e^{-|w|^2/s-3w\cdot y/s^2-3|y|^2/s^3}|w|^\alpha \dd w \dd y \dd s\right|\\
%&\leq [g]_{\alpha,Q_1}\int_0^{2h^2} \int_{\R^d}\int_{\R^d} \frac {C_d} s (|\overline y|^2 + |\overline w|^2 + 2)e^{-|\overline w|^2/16 - 3|\overline y|^2/5}|\overline w|^{\alpha} s^{\alpha/2}\dd \overline w \dd \overline y \dd s\\
%&\leq C [g]_{\alpha,Q_1}\int_0^{2h^2} s^{\alpha/2 - 1} \dd s
%\leq C [g]_{\alpha,Q_1}h^{\alpha}
%\leq C [g]_{\alpha, Q_1} \|z-z'\|^\alpha,
%\end{align*}
%where $\overline w = w/s^{1/2}$ and $\overline y = y / s^{3/2}$. 

%To estimate $I_2$, for notational convenience, let $\zeta = (s,y,w)$ and
%\[ M(\zeta,h) = \max_{\|\xi\|\leq h} \left|\xi_1 \partial_t^2\Gamma(\zeta+\xi) + \xi_2\cdot \nabla_x \partial_t\Gamma(\zeta+\xi) +\xi_3\cdot\nabla_v\partial_t \Gamma(\zeta+\xi)\right|.\]
%%\[
%%	\overline \partial_t \Gamma_h(z)
%%		= \max_{\|\xi\|\leq h} \xi \cdot D\partial_t \Gamma(z+\xi),
%%\]
%The following bound is straightforward to show using the change of variables $\overline y = y/s^{3/2}$, $\overline w = w/s^{1/2}$:
% \begin{equation}\label{eq:overline_gamma}
% 	\int_{\R^{d}} \int_{\R^d}\|(s,y,w)\|^\alpha M(\zeta,h) \dd y \dd w
% 		\leq C s^{\alpha/2} ( h^2 s^{-2} + h^3 s^{-5/2} + h s^{-3/2}).
% \end{equation}
%Here it is crucial that $s \geq 2h^2$.  
Changing variables in $I_2$ and adding and subtracting a term, we have
\begin{equation*}
\begin{split}
I_2
	&= \int_{-1}^{t-2h^2} \int_{\R^{d}}\int_{\R^d} [\partial_{v_iv_j}\Gamma(t - s,x-y,v-w)g(s,y-(t-s)w,w)\\
	&\qquad \qquad \qquad - \partial_{v_iv_j} \Gamma(t'-s,x'-y,v'-w)g(s,y-(t'-s)w,w)]\dd w \dd y \dd s\\
	&= \int_{-1}^{t-2h^2} \int_{\R^{d}}\int_{\R^d} \partial_{v_iv_j}\Gamma(t - s,x-y,v-w) \\
	&\qquad \qquad \qquad \times [g(s,y-(t-s)w,w) - g(s,y-(t'-s)w,w)]\dd w \dd y \dd s\\
	&\quad + \int_{-1}^{t-2h^2} \int_{\R^{d}}\int_{\R^d}[\partial_{v_iv_j}\Gamma(t - s,x-y,v-w) - \partial_{v_iv_j}\Gamma(t' - s,x'-y,v'-w)]\\
	&\qquad \qquad \qquad \times g(s,y-(t'-s)w,w) \dd w \dd y \dd s\\
	&=: I_2' + I_2''. 
\end{split}
\end{equation*}
Re-defining $\delta g(s,y,w) := g(s,y-(t-s)w,w) - g(s,y-(t'-s)w,w)$, we have
\[|\delta g(s,y,w) - \delta g(s,y,v)| \leq [g]_{\alpha,Q_1}\left((|t-s|^{1/3}+|t'-s|^{1/3})|v-w|^{1/3} + 2|v-w|\right),\]
which implies
\begin{align*}
|I_2'| &= \left|\int_{-1}^{t-2h^2}  \int_{\R^d}\int_{\R^d} \partial_{v_iv_j}\Gamma(t-s,x-y,v-w)[\delta g(s,y,w) - \delta g(s,y,v)] \dd w \dd y \dd s\right|\\
&\lesssim [g]_{\alpha,Q_1}\int_{2h^2}^{1+t} \int_{\R^d}\int_{\R^d}  |\partial_{v_iv_j}\Gamma(s,y,w)| \left((s^{\alpha/3} + |t'-t+s|^{\alpha/3})|w|^{\alpha/3} + |w|^\alpha\right)\dd w \dd y \dd s\\
&\lesssim [g]_{\alpha,Q_1} \int_{2h^2}^{1+t} s^{-1} \left(s^{\alpha/2} + h^{2\alpha/3} s^{\alpha/6}\right)\dd s
\lesssim [g]_{\alpha,Q_1} h^{\alpha},
\end{align*}
by Lemma \ref{l:Gamma-estimates}. For $I_2''$, first note that
\[\begin{split}
	I_2''
		= \int_{-1}^{t-2h^2} \int_{\R^d}\int_{\R^d} &[\partial_{v_iv_j} \Gamma(t-s,x-y,v-w) - \Gamma(t'-s,x'-y,v'-w)]\\
		& \times [g(s,y-(t'-s)w,w) - g(s,y-(t'-s)v,v)]\dd w \dd y \dd s.
\end{split}\]
We next note that, with $\zeta = (s,y,w)$,
\begin{align*}
|\partial_{v_iv_j}&\Gamma(t - s,x-y,v-w) - \partial_{v_iv_j}\Gamma(t' - s,x'-y,v'-w)|\\
 &\leq \max_{\|\xi\|\leq h, \xi_1 \geq 0} \left(h^2|\partial_t\partial_{v_iv_j}\Gamma(z-\zeta+\xi)| + h^3|\nabla_x\partial_{v_iv_j}\Gamma(z-\zeta+\xi)|+h|\nabla_v\partial_{v_iv_j}\Gamma(z-\zeta+\xi)|\right),
\end{align*}
where we denote $\xi = (\xi_1, \xi_2, \xi_3)\in \R\times \R^d\times \R^d$.

Using these two facts along with the second half of Lemma~\ref{l:Gamma-estimates}, we have%so by applying Lemma \ref{l:Gamma-estimates} again, we have
\begin{align*}
|I_2''| %&= \left|\int_{-1}^{t-2h^2} \int_{\R^{d}}\int_{\R^d}[\partial_{v_iv_j}\Gamma(t - s,x-y,v-w) - \partial_{v_iv_j}\Gamma(t' - s,x'-y,v'-w)]\right.\\
	%&\qquad \times [g(s,y-(t'-s)w,w) - g(s,y-(t'-s)v,v)] \dd w \dd y \dd s\Big|\\
% &\lesssim  [g]_{\alpha,Q_1}\max_{\|\xi\|\leq h}\int_{2h^2-\xi_1}^{1-\xi_1} \int_{\R^d}\int_{\R^d} \big[ h^2|\partial_t\partial_{v_iv_j} \Gamma(s,y,w)| + h^3|\nabla_x\partial_{v_iv_j}\Gamma(s,y,w)|\\
% &\qquad +h|\nabla_v\partial_{v_iv_j}\Gamma(s,y,w)|\big]
%  \left(|t'-t+s+\xi_1|^{\alpha/3}|w-\xi_3|^{\alpha/3} + |w-\xi_3|^\alpha\right)\dd w \dd y \dd s\\
% &\leq C[g]_{\alpha,Q_1}  \int_{h^2}^{1+h^2} [h^2 s^{-2} + h^3s^{-5/2} + hs^{-3/2}] ((h^{2\alpha/3} + s^{\alpha/3})(s^{\alpha/6}+h^{\alpha/3}) + s^{\alpha/2}+h^\alpha)\dd s\\
% &\leq C[g]_{\alpha,Q_1} h^\alpha.
&\lesssim  [g]_{\alpha,Q_1}\int^{1+t}_{2h^2} \int_{\R^d}\int_{\R^d} \max_{\|\xi\|\leq h, \xi_1\geq 0}\big[ h^2|\partial_t\partial_{v_iv_j} \Gamma(\zeta + \xi)| + h^3|\nabla_x\partial_{v_iv_j}\Gamma(\zeta + \xi)|\\
 &\qquad +h|\nabla_v\partial_{v_iv_j}\Gamma(\zeta + \xi)|\big]
  \left(|t'-t+s+\xi_1|^{\alpha/3}|w-\xi_3|^{\alpha/3} + |w-\xi_3|^\alpha\right)\dd w \dd y \dd s\\
 &\lesssim  [g]_{\alpha,Q_1}\int^{1+t}_{2h^2} \int_{\R^d}\int_{\R^d} \max_{\|\xi\|\leq h, \xi_1\geq 0}\big[ h^2|\partial_t\partial_{v_iv_j} \Gamma(\zeta + \xi)| + h^3|\nabla_x\partial_{v_iv_j}\Gamma(\zeta + \xi)|\\
 &\qquad +h|\nabla_v\partial_{v_iv_j}\Gamma(\zeta + \xi)|\big]
  \left((h^2 +s)^{\alpha/3}(|w|+ h)^{\alpha/3} + |w|^\alpha + h^\alpha\right)\dd w \dd y \dd s\\
  &\lesssim [g]_{\alpha,Q_1} h^\alpha.
\end{align*}

%\begin{equation}
%\begin{split}
%|I_2|
%	&= \int_{2h^2}^{1+t} \int_{\R^{d}}\int_{\R^d} [ \partial_{v_iv_j} \Gamma(t - t' +s,x-x'+y+(t'-t)(v'-w),v-v'+w) - \partial_{v_iv_j} \Gamma(s,y,w)]\\
%	&\qquad\qquad\times g(t'-s,x'-y,v'-w) \dd w \dd y\dd s \\
%	&= \int_{2h^2}^{1+t} \int_{\R^{d}}\int_{\R^d} [ \partial_t \Gamma(t - t' +s,x-x'+y,v-v'+w) - \partial_t \Gamma(s,y,w)]\\
%	&\qquad\qquad\times [g(t'-s,x'-y,v'-w) - g(t',x',v')] \dd w \dd y \dd s \\
%	&\leq \int_{2h^2}^{1+t} \int_{\R^{d}}\int_{\R^d}  M(\zeta,h) [g]_{\alpha, Q_1} \|(s,y,w)\|^\alpha \dd w \dd y \dd s \\
%	&\leq C[g]_{\alpha, Q_1}  \int_{2h^2}^{1+t}s^{\alpha/2} ( h^2 s^{-2} + h^3 s^{-5/2} + h s^{-3/2})ds
%	\leq C[g]_{\alpha, Q_1} h^\alpha.
%\end{split}
%\end{equation}
%In the second equality, we used that $\int_{\R^{d}}\int_{\R^d}\partial_{v_iv_j} \Gamma(s,y,w)\dd w\dd y = 0$. In the third inequality, we used Taylor's theorem, and in the last inequality, we used the bound~\eqref{eq:overline_gamma}.

Proceeding as in our estimate of $I_1$, with $g(t'-s,x'-y-s(v'-w),v'-w)$ playing the role of $\delta g(s,y,w)$, we obtain
\begin{align*}
|I_3|\lesssim  [g]_{\alpha,Q_1} \left((1+t')^{\alpha/2} - (1+t)^{\alpha/2}\right) \lesssim [g]_{\alpha,Q_1} |t'-t|^{\alpha/2} \lesssim [g]_{\alpha,Q_1} h^\alpha,
\end{align*}
completing the estimate of $[D_v^2 u]_{\alpha,Q_1}$.

To estimate the $C^{(2+\alpha)/3}$ norm of $u$ in the $x$ variable, we define $h = |x'-x|$ and write
\begin{align*}
&u(t,x',v) - u(t,x,v) \\
&= \left(\int_0^{h^{2/3}} + \int_{h^{2/3}}^{1+t}\right) \int_{\R^d}\int_{\R^d} \Gamma(s,y,w)\\
&\qquad  \times[g(t-s,x'-y-s(v-w),v-w) - g(t-s,x-y-s(v-w),v-w)]\dd w \dd y \dd s\\
&=: J_1 + J_2.
\end{align*}
Since $\int_{\R^d}\int_{\R^d} \Gamma(s,y,w)\dd w \dd y = 1$ for any $s>0$, we have
\begin{align*}
|J_1| &\leq [g]_{\alpha,Q_1} h^{\alpha/3} \int_0^{h^{2/3}} \int_{\R^d}\int_{\R^d} \Gamma(s,y,w) \dd w \dd y \dd s\\
&\leq [g]_{\alpha,Q_1} h^{(2+\alpha)/3}.
\end{align*}
For $J_2$, we use a change of variables and then the fact that
\[
	\int_{\R^d}\int_{\R^d} \Gamma(s, x' - y, w) \dd y \dd w
		= \int_{\R^d}\int_{\R^d} \Gamma(s, x - y, w) \dd y \dd w
\]
to rewrite the convolution as follows:
\begin{align*}
|J_2| &= \left|\int_{h^{2/3}}^{1+t} \int_{\R^d}\int_{\R^d} [\Gamma(s,x'-y,w) - \Gamma(s,x-y,w)] g(t-s,y-s(v-w),v-w) \dd y \dd w \dd s\right|\\
&= \left|\int_{h^{2/3}}^{1+t} \int_{\R^d}\int_{\R^d}[\Gamma(s,x'-y,w) - \Gamma(s,x-y,w)]\right.\\
&\qquad \qquad \times [g(t-s,y-s(v-w),v-w) - g(t-s,x-s(v-w),v-w)]\dd y \dd w \dd s\Big|\\
&\leq  [g]_{\alpha,Q_1} h\int_{h^{2/3}}^{1+t} \int_{\R^d}\int_{\R^d}  \left( \max_{|\xi|\leq h}|\nabla_x \Gamma(s,x-y+\xi,w)| \right)|x-y|^{\alpha/3} \dd y \dd w \dd s\\
&\lesssim [g]_{\alpha,Q_1} h  \max_{|\xi|\leq h}\int_{h^{2/3}}^{1+t} \left(s^{-3/2 + \alpha/2} + s^{-3/2}|\xi|^{\alpha/3}\right)\dd s
\lesssim [g]_{\alpha,Q_1} h^{(2+\alpha)/3},
\end{align*}
using Lemma \ref{l:Gamma-estimates}, that $|\xi|\leq h$, and that $h \leq s^{3/2}$ on the domain of integration.
%Note that after the change of variables $\overline w = w/s^{1/2}$, $\overline y = (x-y)/s^{3/2}$, for $|\xi|\leq h\leq s^{3/2}$ we have
%\begin{align*}
%|\nabla_x\Gamma(s,x-y+\xi,w)| &\leq \left|\frac {3w}{s^2} + \frac{6(x-y+\xi)}{s^3}\right| \exp\left(-\frac 1 {16}\frac{|w|^2} s - \frac 3 5 \frac{|y|^2}{s^3}\right)\\
%& \leq \frac 1 {s^{3/2}} (3|\overline w| + 6(|\overline y|+1))\exp\left(-\frac 1 {16} |\overline w|^2 - \frac 3 5 |\overline y|^2\right).
%\end{align*}
%This implies
%\begin{align*}
%|J_2| \leq C [g]_{\alpha,Q_1} h \int_{h^{2/3}}^{1+t} s^{(\alpha-3)/2} \dd s \leq C [g]_{\alpha,Q_1} h^{(2+\alpha)/3},
%\end{align*}
%and the proof of the first  estimate is finished.
%The estimate of $u_2 = \Gamma\ast \nabla_v g$ follows from the first estimate, since $\nabla_v u_2 = D^2_v u_1$.

The proof that $[u]_{\beta,t,Q_1} \leq [g]_{\alpha,Q_1}$ follows a similar outline, and is omitted.
\end{proof}

\begin{lemma}\label{l:dt-dx}
With $g$ and $u$ as in Lemma \textup{\ref{l:convolution}}, assume in addition that $g\in C^{1+\alpha}(Q_1)$ for some $\alpha \in (0,1)$. Then $u$ satisfies
\[[u]_{3+\alpha,Q_1}'' =  [\partial_t u]_{\alpha,Q_1} + [\nabla_x u]_{\alpha,Q_1} + [D_v^3 u]_{\alpha,Q_1} \leq C[g]_{1+\alpha,Q_1},\]
where the constant depends on $\alpha$ and $d$.
\end{lemma}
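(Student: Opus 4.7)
The idea is to reduce to Lemma~\ref{l:convolution}. First, the equation \eqref{e:simple} yields $\partial_t u = -v\cdot\nabla_x u + \Delta_v u + g$; since $|v|\leq 1$ on $Q_1$ and Lemma~\ref{l:convolution} gives $[\Delta_v u]_{\alpha, Q_1}\leq C[g]_{\alpha, Q_1}\leq C[g]_{1+\alpha, Q_1}$, the bound on $[\partial_t u]_{\alpha, Q_1}$ will follow from the bound on $[\nabla_x u]_{\alpha, Q_1}$. So the two essential tasks are
\[
[D_v^3 u]_{\alpha, Q_1} \leq C[g]_{1+\alpha, Q_1} \quad\text{and}\quad [\nabla_x u]_{\alpha, Q_1} \leq C[g]_{1+\alpha, Q_1}.
\]

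For $[D_v^3 u]_{\alpha}$, I would transfer one $v$-derivative off the kernel onto $g$. Writing $\mathcal S_\zeta^{-1} z = (t-s, x-y-(t-s)w, v-w)$, a direct chain-rule computation yields the identity
\[
\partial_{v_i}\Gamma(\mathcal S_\zeta^{-1} z) = -\partial_{w_i}\bigl[\Gamma(\mathcal S_\zeta^{-1} z)\bigr] - (t-s)\,\partial_{x_i}\Gamma(\mathcal S_\zeta^{-1} z).
\]
Integrating by parts in $w_i$ (using that $g$ is compactly supported in $Q_1$) gives the decomposition
\[
\partial_{v_i} u(z) = \int \Gamma(\mathcal S_\zeta^{-1} z)\,\partial_{w_i} g(\zeta)\dd\zeta \;-\; \int (t-s)\,\partial_{x_i}\Gamma(\mathcal S_\zeta^{-1} z)\,g(\zeta)\dd\zeta.
\]
The first integral is $\Gamma * (\partial_{v_i} g)$, so applying $D_v^2$ and Lemma~\ref{l:convolution} with right-hand side $\partial_{v_i} g \in C^\alpha$ controls its $[\,\cdot\,]_{\alpha}$-norm by $C[\partial_{v_i} g]_{\alpha, Q_1} \leq C[g]_{1+\alpha, Q_1}$. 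For the second integral, I would estimate $D_v^2$ of it directly, following the template of Lemma~\ref{l:convolution}'s proof: for $z, z' \in Q_1$ with $h := \rho(z, z')$, split the integral at $t-s = 2h^2$. In the near region the factor $(t-s)$ tames the sharper singularity of $\partial_{x_i} D_v^2 \Gamma$; in the far region, integrate by parts once more in $y_i$ (via $\partial_{x_i}\Gamma(\mathcal S_\zeta^{-1} z) = -\partial_{y_i}[\Gamma(\mathcal S_\zeta^{-1} z)]$) to move the $x$-derivative onto $g$, and use the $(1+\alpha)/3$-Hölder regularity of $g$ in $x$ (part of $[g]_{1+\alpha}$) to extract Hölder-increment cancellations in place of the non-classical $\partial_{y} g$.

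For $[\nabla_x u]_{\alpha}$, apply the analogous strategy directly to $\partial_{x_i} u(z) = \int \partial_{x_i}\Gamma(\mathcal S_\zeta^{-1} z)\,g(\zeta)\dd\zeta$. Estimating the Hölder difference $\partial_{x_i} u(z') - \partial_{x_i} u(z)$ splits into near and far regions at scale $h^2 = \rho(z,z')^2$; the far region is handled by integration by parts in $y_i$ to transfer the $x$-derivative onto $g$, with the anisotropic Hölder seminorms $[g]_{(1+\alpha)/3,x}$ and $[g]_{(1+\alpha)/2,t}$ playing the role of the missing classical derivative $\partial_{y}g$, and the near region is bounded using the Hölder regularity of $g$ to compensate for the singularity of $\partial_{x_i}\Gamma$, which is one half-order sharper than $\partial_{v_iv_j}\Gamma$ in the scaling of Lemma~\ref{l:Gamma-estimates}. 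The main obstacle throughout is bookkeeping the cross terms: every time a $v$-derivative is moved onto $g$ one produces a companion piece with an $x$-derivative on $\Gamma$ carrying a factor $(t-s)$, which must be handled by further IBP combined with the anisotropic $C^{1+\alpha}$ regularity of $g$. Once both estimates are in hand, $[\partial_t u]_{\alpha}\leq C[g]_{1+\alpha}$ follows from the equation.
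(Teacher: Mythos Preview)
Your overall structure is right and matches the paper: derive $[\partial_t u]_{\alpha}$ from the equation once $[\nabla_x u]_{\alpha}$ and $[D_v^2 u]_{\alpha}$ are in hand, and estimate $[\nabla_x u]_{\alpha}$ by a direct kernel argument. Two points are worth flagging.

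For $[\nabla_x u]_{\alpha}$, the paper does \emph{not} integrate by parts in $y$; since $g$ is only $C^{(1+\alpha)/3}$ in $x$, there is no classical $\partial_{y_i} g$ to land on. Instead the paper uses the zero-mean cancellation $\int_{\R^d}\partial_{x_i}\Gamma(s,y,w)\,dy=0$ to subtract $\delta g(s,0,w)$ (respectively $\delta g(s,x,w)$) and then invokes the $|y|^{(1+\alpha)/3}$ H\"older increment of $g$. This is exactly the ``H\"older-in-place-of-derivative'' mechanism you allude to, but phrasing it as integration by parts is misleading and would not survive a literal execution.

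For $[D_v^3 u]_{\alpha}$, your integration-by-parts-in-$w$ decomposition is valid, but the paper takes a shorter route: differentiate \eqref{e:simple} in $v_k$ to obtain
\[
(\partial_t + v\cdot\nabla_x - \Delta_v)(\partial_{v_k} u) = \partial_{v_k} g - \partial_{x_k} u,
\]
and apply Lemma~\ref{l:convolution} to $\partial_{v_k} u$ with right-hand side $\partial_{v_k} g - \partial_{x_k} u \in C^\alpha$, using the just-established bound on $[\nabla_x u]_{\alpha}$. This avoids your second term $\int(t-s)\,\partial_{x_i}\Gamma\cdot g$ entirely and the attendant bookkeeping. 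Your approach would work, but the paper's is cleaner: the ``cross term'' you worry about is precisely $-\partial_{x_k} u$, and rather than estimating it via a separate kernel computation, the paper absorbs it into the source and recycles Lemma~\ref{l:convolution}.
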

%\begin{remark}
%In fact, this estimate only requires $\nabla_v g\in C^\alpha(Q_1)$ and $g\in C^{(1+\alpha)/3}_x$.
%\end{remark}
\begin{proof}
%{\color{blue} This proof should be shortened by pointing out its similarities to the previous proof.} %By the equation \eqref{e:simple} and Lemma \ref{l:convolution}, it suffices to 
First, we show the estimate $[\nabla_x u]_{\alpha,Q_1} \leq C[g]_{1+\alpha,Q_1}$. We proceed as in the previous lemma, taking advantage of the regularity of $g$ in $x$. We have
 \begin{align*}
\partial_{x_i} u(z) % &= \int_{-1}^t \int_{\R^d}\int_{\R^d} \partial_{v_iv_j}\Gamma(t-s,x-y-(t-s)w,v-w) g(s,y,w)\dd w \dd y \dd s\\
&=  \int_{0}^{1+t} \int_{\R^d}\int_{\R^d}\partial_{x_i}\Gamma(s,y,w) g(t-s,x-y-s(v-w),v-w)\dd w\dd y \dd s,
 \end{align*}
for $1\leq i\leq d$. Let $z,z' \in Q_1$ with $t \leq t'$, and 
let $h = \rho(z,z')$. We write
\begin{align*}
&\partial_{x_i} u(z) - \partial_{x_i}u(z')\\
&= \left(\int_0^{2h^2} + \int_{2h^2}^{1+t}\right)\int_{\R^{d}}\int_{\R^d}\partial_{x_i}\Gamma(s,y,w) \delta g(s,y,w) \dd w\dd y \dd s\\
&\quad - \int_{1+t}^{1+t'} \int_{\R^{d}}\int_{\R^d}\partial_{x_i}\Gamma(s,y,w)g(t'-s,x'-y-s(v'-w),v'-w)\dd w\dd y \dd s\\
&=: I_1 + I_2 + I_3,
\end{align*}
where
\[\delta g(s,y,w) := g(t-s,x-y - s(v-w),v-w) - g(t'-s,x'-y-s(v'-w),v'-w).\] 
We make the convention that if $2h^2 \geq 1+t$, then $I_2=0$. 

Since $\spt(g) \subset Q_1$, we have $|\delta g(s,y,w)-\delta g(s,0,w)| \leq 2[g]_{\alpha,Q_1}|y|^{(1+\alpha)/3}$. Observe that for any $s>0$, $y\in \R^d$,
\[\int_{\R^d} \partial_{x_i}\Gamma(s,y,w) \dd y = 0.\]
This allows us to estimate $I_1$ as follows:
\begin{align*}
|I_1| &= \left|\int_0^{2h^2}\int_{\R^{d}}\int_{\R^d} \partial_{x_i}\Gamma(s,y,w) [\delta g(s,y,w)-\delta g(s,0,w)]\dd y \dd w \dd s\right|\\
&\leq 2 [g]_{\alpha,Q_1} \int_0^{2h^2} \int_{\R^{d}}\int_{\R^d}|\partial_{x_i}\Gamma(s,y,w)| |y|^{(1+\alpha)/3}\dd y \dd w \dd s\\
&\lesssim [g]_{\alpha,Q_1}\int_0^{2h^2} s^{-3/2 + (1+\alpha)/2} \dd s \lesssim [g]_{\alpha,Q_1}h^{\alpha},
\end{align*}
by Lemma \ref{l:Gamma-estimates}. %Here, it is essential that $g\in C_x^{(1+\alpha)/3}$.

Changing variables in $I_2$, we have
\begin{equation*}
\begin{split}
I_2
	&= \int_{-1}^{t-2h^2} \int_{\R^{d}}\int_{\R^d} [\partial_{x_i}\Gamma(t - s,x-y,v-w)g(s,y-(t-s)w,w)\\
	&\qquad \qquad \qquad - \partial_{x_i} \Gamma(t'-s,x'-y,v'-w)g(s,y-(t'-s)w,w)]\dd y \dd w \dd s\\
	&= \int_{-1}^{t-2h^2} \int_{\R^{d}}\int_{\R^d} \partial_{x_i}\Gamma(t - s,x-y,v-w) \\
	&\qquad \qquad \qquad \times [g(s,y-(t-s)w,w) - g(s,y-(t'-s)w,w)]\dd y \dd w \dd s\\
	&\quad + \int_{-1}^{t-2h^2} \int_{\R^{d}}\int_{\R^d}[\partial_{x_i}\Gamma(t - s,x-y,v-w) - \partial_{x_i}\Gamma(t' - s,x'-y,v'-w)]\\
	&\qquad \qquad \qquad \times g(s,y-(t'-s)w,w) \dd y \dd w \dd s\\
	&=: I_2' + I_2''. 
\end{split}
\end{equation*}
Re-defining $\delta g(s,y,w) := g(s,y-(t-s)w,w) - g(s,y-(t'-s)w,w)$, we have
\[|\delta g(s,y,w) - \delta g(s,x,w)| \leq 2[g]_{1+\alpha,Q_1}|x-y|^{(1+\alpha)/3},\]
which implies
\begin{align*}
|I_2'| &= \left|\int_{-1}^{t-2h^2}  \int_{\R^d}\int_{\R^d} \partial_{x_i}\Gamma(t-s,x-y,v-w)[\delta g(s,y,w) - \delta g(s,x,w)] \dd y \dd w \dd s\right|\\
&\lesssim [g]_{\alpha,Q_1}\int_{2h^2}^{1+t} \int_{\R^d}\int_{\R^d}  |\partial_{x_i}\Gamma(s,y,w)| |y|^{(1+\alpha)/3}\dd y \dd w \dd s\\
&\lesssim [g]_{\alpha,Q_1} \int_{2h^2}^{1+t} s^{-3/2+(1+\alpha)/2} \dd s \lesssim [g]_{\alpha,Q_1} h^{\alpha},
\end{align*}
by Lemma \ref{l:Gamma-estimates}. For $I_2''$, first note that with $\zeta = (s,y,w)$,
\begin{align*}
|\partial_{x_i}&\Gamma(t - s,x-y,v-w) - \partial_{x_i}\Gamma(t' - s,x'-y,v'-w)|\\
 &\leq \max_{\|\xi\|\leq h} \left(h^2|\partial_t\partial_{x_i}\Gamma(z-\zeta+\xi)| + h^3|\nabla_x\partial_{x_i}\Gamma(z-\zeta+\xi)|+h|\nabla_v\partial_{x_i}\Gamma(z-\zeta+\xi)|\right).
\end{align*}
By applying Lemma \ref{l:Gamma-estimates} again and arguing as in the proof of Lemma \ref{l:convolution}, we have
\begin{align*}
%|I_2''| &= \left|\int_{-1}^{t-2h^2} \int_{\R^{d}}\int_{\R^d}[\partial_{x_i}\Gamma(t - s,x-y,v-w) - \partial_{x_i}\Gamma(t' - s,x'-y,v'-w)]\right.\\
%	&\qquad \times [g(s,y-(t'-s)w,w) - g(s,x-(t'-s)w,w)] \dd y \dd w \dd s\Big|\\
% &\leq  C[g]_{1+\alpha,Q_1}\max_{\|\xi\|\leq h}\int_{2h^2-\xi_1}^{1-\xi_1} \int_{\R^d}\int_{\R^d} [ h^2|\partial_t\partial_{x_i} \Gamma(s,y,w)| + h^3|\nabla_x\partial_{x_i}\Gamma(s,y,w)|\\
% &\qquad +h|\nabla_v\partial_{x_i}\Gamma(s,y,w)|]
%|y-\xi_2|^{(1+\alpha)/3}\dd y \dd w \dd s\\
% &\leq C[g]_{1+\alpha,Q_1}  \int_{h^2}^{1+h^2} [h^2 s^{-5/2} + h^3s^{-3} + hs^{-2}] (s^{(1+\alpha)/2} + h^{1+\alpha}) \dd s\\
% &\leq C[g]_{1+\alpha,Q_1} h^\alpha.
|I_2''| &= \left|\int_{-1}^{t-2h^2} \int_{\R^{d}}\int_{\R^d}[\partial_{x_i}\Gamma(t - s,x-y,v-w) - \partial_{x_i}\Gamma(t' - s,x'-y,v'-w)]\right.\\
	&\qquad \times [g(s,y-(t'-s)w,w) - g(s,x-(t'-s)w,w)] \dd y \dd w \dd s\Big|\\
 &\lesssim [g]_{1+\alpha,Q_1}\int_{2h^2}^{1+t} \int_{\R^d}\int_{\R^d} \max_{\|\xi\|\leq h}[ h^2|\partial_t\partial_{x_i} \Gamma(s,y,w)| + h^3|\nabla_x\partial_{x_i}\Gamma(s,y,w)|\\
 &\qquad +h|\nabla_v\partial_{x_i}\Gamma(s,y,w)|]
|y-\xi_2|^{(1+\alpha)/3}\dd y \dd w \dd s\\
 %&\leq C[g]_{1+\alpha,Q_1}  \int_{h^2}^{1+h^2} [h^2 s^{-5/2} + h^3s^{-3} + hs^{-2}] (s^{(1+\alpha)/2} + h^{1+\alpha}) \dd s\\
 &\lesssim [g]_{1+\alpha,Q_1} h^\alpha.
\end{align*}

Proceeding as in our estimate of $I_1$, with $g(t'-s,x'-y-s(v'-w),v'-w)$ playing the role of $\delta g(s,y,w)$, we obtain
\begin{align*}
|I_3|\leq C  [g]_{1+\alpha,Q_1} \left((1+t')^{\alpha/2} - (1+t)^{\alpha/2}\right) \leq C [g]_{1+\alpha,Q_1} |t'-t|^{\alpha/2} \leq C[g]_{1+\alpha,Q_1} h^{\alpha},
\end{align*}
and the proof of the estimate on $[\nabla_x u]_{\alpha,Q_1}$ is complete.

Equation \eqref{e:simple} and Lemma \ref{l:convolution} imply the estimate on $[\partial_t u]_{\alpha,Q_1}$. We complete the proof by differentiating \eqref{e:simple} in $v$ and applying Lemma \ref{l:convolution} to estimate $[D_v^3 u]_{\alpha,Q_1}$, using our already-established estimate on $\nabla_x u$.
\end{proof}

Next, let $A_0$ be a (constant) symmetric, strictly positive definite, $d\times d$ matrix. Assume that $\sigma(A_0)\subset [\lambda,\Lambda]$ where $0<\lambda < \Lambda$.
\begin{lemma}\label{l:constant-coeffs}
If $g\in C^\alpha(Q_1)$ for some $\alpha\in (0,1)$, and $g$ %if $u\in C_v^{2+\alpha} C_x^{(2+\alpha)/3}C_t^\beta(Q_1)$ and 
has compact support in $Q_1$, then the solution $u$ of 
\textup{\[\partial_t u + v\cdot \nabla_x u - \tr(A_0 D_v^2 u) = g\]}
satisfies
\begin{align*}
%[D_v^2 u]_{\alpha,Q_{1}} + [u]_{(2+\alpha)/3,x,Q_1} + [u]_{\beta,t,Q_1} &\leq C [g]_{\alpha,Q_1},
[u]_{2+\alpha,\beta,Q_1}' &\leq C [g]_{\alpha,Q_1},
\end{align*}
for any $\beta\in (0,1)$. If, in addition, $g\in C^{1+\alpha}(Q_1)$, then
\begin{align*}
[u]_{3+\alpha,Q_1}'' &\leq C [g]_{1+\alpha,Q_1}.
\end{align*}
The constants $C$ depend on $d$, $\alpha$, $\beta$, $\lambda$, and $\Lambda$.
\end{lemma}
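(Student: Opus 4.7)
The plan is a standard linear change of variables to reduce to the identity-matrix case, i.e.~to the equation \eqref{e:simple} treated in Lemmas \ref{l:convolution} and \ref{l:dt-dx}. Let $T := A_0^{-1/2}$, which is symmetric positive definite with spectrum contained in $[\Lambda^{-1/2}, \lambda^{-1/2}]$. Define $\Phi(t,x,v) := (t, Tx, Tv)$ and set $\tilde u(\tilde z) := u(\Phi^{-1}\tilde z)$, $\tilde g(\tilde z) := g(\Phi^{-1}\tilde z)$. The chain rule together with the symmetry of $T$ gives $v \cdot \nabla_x u = \tilde v \cdot \nabla_{\tilde x} \tilde u$ and $\tr(A_0 D_v^2 u) = \tr(T A_0 T\, D_{\tilde v}^2 \tilde u) = \Delta_{\tilde v}\tilde u$, since $T A_0 T = I$. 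Hence $\tilde u$ satisfies $\partial_t \tilde u + \tilde v \cdot \nabla_{\tilde x}\tilde u - \Delta_{\tilde v}\tilde u = \tilde g$.

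Because $\|T\|, \|T^{-1}\| \leq C(\lambda,\Lambda)$, there exists $R = R(\lambda,\Lambda) \geq 1$ with $\Phi(Q_1) \subset Q_R$. To fit the support of $\tilde g$ into the unit cylinder required by the earlier lemmas, apply the kinetic dilation $\delta_R$: let $\bar u(z) := \tilde u(\delta_R z)$ and $\bar g(z) := R^2 \tilde g(\delta_R z)$. The scale invariance of \eqref{e:simple} (with the source rescaling by $R^2$) shows that $\bar u$ again solves the identity-matrix kinetic equation, and $\bar g$ is now supported in $\delta_{1/R}\Phi(Q_1) \subset Q_1$. Lemma \ref{l:convolution} then yields $[\bar u]_{2+\alpha,\beta,Q_1}' \lesssim [\bar g]_{\alpha,Q_1}$, and when $g \in C^{1+\alpha}$, Lemma \ref{l:dt-dx} gives $[\bar u]_{3+\alpha,Q_1}'' \lesssim [\bar g]_{1+\alpha,Q_1}$.

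It remains to transport these bounds back to $u$ on $Q_1$. The quasimetric satisfies $\rho(\Phi z,\Phi z') \approx \rho(z,z')$ with constants depending only on $\lambda,\Lambda$, and derivatives transform as $\nabla_v u = T \nabla_{\tilde v}\tilde u$, $D_v^2 u = T\, D_{\tilde v}^2\tilde u\, T$, $\nabla_x u = T \nabla_{\tilde x}\tilde u$, etc., each acquiring only $O(1)$ factors of $\|T\|$. The dilation $\delta_R$ then contributes finitely many powers of $R$ to each seminorm, according to the kinetic scaling. Composing these transformations produces the claimed estimates $[u]_{2+\alpha,\beta,Q_1}' \leq C[g]_{\alpha,Q_1}$ and $[u]_{3+\alpha,Q_1}'' \leq C[g]_{1+\alpha,Q_1}$ with $C$ depending on $d,\alpha,\beta,\lambda,\Lambda$. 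The main technical burden is purely bookkeeping: one must check that every piece of the primed seminorms in Definition \ref{d:prime} scales correctly under both $\Phi$ and $\delta_R$, so that all the resulting multiplicative constants can be absorbed into the stated dependence on $\lambda$ and $\Lambda$.
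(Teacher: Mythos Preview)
Your proof is correct and follows essentially the same approach as the paper: both arguments take the square root of $A_0$ to define a linear change of variables $(t,x,v)\mapsto (t,A_0^{1/2}x,A_0^{1/2}v)$ reducing to the identity-matrix equation \eqref{e:simple}, invoke Lemmas \ref{l:convolution} and \ref{l:dt-dx}, and then apply a kinetic dilation $\delta_R$ with $R=R(\lambda,\Lambda)$ to return to $Q_1$. The only cosmetic difference is that you phrase the change of variables via $T=A_0^{-1/2}$ and $\Phi^{-1}$, whereas the paper writes it directly as $u_P(t,x,v)=u(t,Px,Pv)$ with $P=A_0^{1/2}$; the resulting transformed function is the same.
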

\begin{proof}
Let $P$ be such that $P^2 = A_0$, and define $u_P(t,x,v) := u(t,Px,Pv)$. Notice that $\sigma(P) \subset [\sqrt \lambda, \sqrt \Lambda]$.  Then 
\[\partial_t u_P + v\cdot \nabla_x u_P - \Delta_v u_P = (\partial_t u + v\cdot\nabla_x u-\Delta_v u)(t,Px,Pv) = g(t,Px,Pv) =: g_P(t,x,v),\] 
and we can apply Lemma \ref{l:convolution} to $u_P = \displaystyle\int \Gamma (\mathcal S_{\zeta}^{-1} z) g_P(\zeta)\dd \zeta$ to obtain 
\begin{align*}
[u]_{2+\alpha,P(Q_{1})} &\leq C(P) [g]_{\alpha,P(Q_1)},
\end{align*}
where $P(Q_1) := (-1,0]\times P(B_1)\times P(B_1)$. To get an estimate on $Q_1$, we replace $u$ with $u(R^2t, R^3x,Rv)$, where $R>0$ depends only on $\lambda$ and $\Lambda$.  %{\color{red} This gives
%\[
%	[u]_{2+\alpha,Q_1}
%		\leq C(\text{dependence on support})\frac{\Lambda^{\alpha/2}}{\lambda^{(2+\alpha)/2}} %[g]_{\alpha, Q_1}.
%\]
%NOTE: I think the constant should be $(\Lambda / \lambda)^{(2+\alpha)/2}$}
Similarly, if $D_v^3u, \nabla_x u,\partial_t u\in C^{\alpha}(Q_1)$, we apply Lemma \ref{l:dt-dx} to $u_P$.
\end{proof}

\subsection{Variable coefficients}

Let $L$ be an operator of the form
\[Lu = \tr(A(z)D_v^2 u),\]% + B(z)\cdot \nabla_v u + C(z)u,\]
where $A\in C^{\alpha}(Q_1)$, and $0<\lambda I \leq A(z) \leq \Lambda I$ for all $z\in Q_1$.  We now study equations of the form
\begin{equation}\label{e:variable}
\partial_t u + v\cdot \nabla_x u - Lu = g.
\end{equation}
As is standard, we  extend Lemma \ref{l:constant-coeffs} to solutions of \eqref{e:variable} by freezing the coefficients at a point $z$ and taking advantage of the closeness of $L$ to $L(z)$ in a small cylinder around $z$, where $L(z)$ refers to the operator $\tr(A(z) D_v^2 u)$ with $z$ ``frozen''. We also remove the assumption that $u$ has compact support, which requires tracking how interior estimates on $Q_r$ scale for $r\in (0,1]$. For this, we need the following technical lemma: 
\begin{lemma}\label{l:omega}
Let $\omega(r)\geq 0$ be bounded in $[r_0,r_1]$ with $r_0\geq 0$. Suppose for $r_0\leq r<R\leq r_1$, we have
\[\omega(r) \leq \mu \omega(R) + \frac A {(R-r)^p} + B\]
for some $\mu \in [0,1)$ and $A,B,p \geq 0$. Then for any $r_0\leq r<R\leq r_1$, there holds
\[\omega(r) \lesssim\left(\frac A {(R-r)^p} + B\right),\]
where the implied constant depends only on $\mu$ and $p$.
\end{lemma}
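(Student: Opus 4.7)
The plan is to apply a classical iteration trick (due in spirit to Campanato; see e.g.\ Giaquinta's monograph): iterate the hypothesis along a geometric sequence of radii interpolating between $r$ and $R$, and sum the resulting series using $\mu < 1$.

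Concretely, I would fix $r_0 \leq r < R \leq r_1$ and pick a ratio $\tau \in (0,1)$ with $\tau^p > \mu$ (possible since $\mu < 1$; when $p = 0$ any $\tau$ works). Define the increasing sequence $s_0 := r$ and $s_{n+1} := s_n + (1-\tau)\tau^n(R-r)$, which telescopes to $R$ because $\sum_{n \geq 0}(1-\tau)\tau^n = 1$, and stays inside $[r, R]$. Applying the hypothesis to each consecutive pair $(s_n, s_{n+1})$ yields, using $s_{n+1} - s_n = (1-\tau)\tau^n(R-r)$,
\[
\omega(s_n) \leq \mu\,\omega(s_{n+1}) + \frac{A}{(1-\tau)^p\,\tau^{np}\,(R-r)^p} + B.
\]
Iterating $k$ times expresses $\omega(r)$ as $\mu^k \omega(s_k)$ plus a partial sum of a geometric series whose $n$th term is a constant multiple of $(\mu/\tau^p)^n$.

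Two features then close the argument: (i) the boundedness of $\omega$ on $[r_0, r_1]$ forces $\mu^k \omega(s_k) \to 0$ as $k \to \infty$, independently of any quantitative bound on $\omega$ (the bound enters only qualitatively, to pass to the limit); (ii) the geometric series converges by the choice $\mu/\tau^p < 1$, summing to a constant times $A(R-r)^{-p} + B$.

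The only subtlety — not really an obstacle, just a bookkeeping point — is making the two requirements on $\tau$ compatible: we need $\tau < 1$ (so $s_n \to R$ and the sequence stays in $[r,R]$) and simultaneously $\tau^p > \mu$ (so the iterated factor $\mu/\tau^p$ is less than $1$ and the series converges). Both can clearly be arranged, which is what makes the final implied constant depend only on $\mu$ and $p$.
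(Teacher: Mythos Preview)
Your argument is correct and is precisely the standard iteration proof of this lemma; the paper does not give its own proof but simply cites \cite[Lemma 4.3]{hanlin}, whose argument is the one you have written out.
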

\begin{proof}
See \cite[Lemma 4.3]{hanlin}.
\end{proof}

\begin{theorem}\label{t:Schauder}
Fix $\alpha \in (0,1)$.  Suppose that, $[u]_{2+\alpha,\beta,Q_1}' < \infty$ for all $\beta \in (0,1)$, and $A\in C^{\alpha}(Q_1)$. Then 
\[ [u]_{2+\alpha,\beta,Q_{1/2}}' \lesssim \left([g]_{\alpha,Q_1} + |A|_{\alpha,Q_1}^{3+\alpha+2/\alpha} |u|_{0,Q_1}\right),\]
where \textup{$g := \partial_t u + v\cdot \nabla_x u - Lu$}. The implied constant depends only on $d,\alpha, \beta, \lambda$, and $\Lambda$.%, and the $\alpha$-H\"older norms of $A(z)$, $B(z)$, and $C(z)$.
\end{theorem}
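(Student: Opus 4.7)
The plan is the classical freezing-coefficients technique: combine the constant-coefficient estimate of Lemma \ref{l:constant-coeffs} with a smooth cutoff to localize, the interpolation inequalities of Lemma \ref{l:interp}, and the iteration lemma Lemma \ref{l:omega} to convert local estimates into the desired bound on the fixed cylinder $Q_{1/2}$.

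Fix $z_0 \in Q_{1/2}$ and a scale $r \in (0,1/4]$ to be chosen. Rewrite \eqref{e:variable} as the constant-coefficient equation
\[
\partial_t u + v \cdot \nabla_x u - \tr(A(z_0) D_v^2 u) = g + \tr\bigl((A(\cdot) - A(z_0)) D_v^2 u\bigr),
\]
and multiply by a smooth cutoff $\phi$ adapted to $\mathcal S_{z_0} \circ \delta_r$, supported in $Q_r(z_0)$ and identically $1$ on $Q_{r/2}(z_0)$. Pulling $w := u\phi$ back by $\delta_{1/r} \circ \mathcal S_{z_0}^{-1}$ yields a compactly supported function in $Q_1$ solving a constant-coefficient equation whose forcing has $C^\alpha(Q_1)$ norm controlled by $[g]_{\alpha,Q_r(z_0)}$, cutoff commutator terms (involving $|u|_0$, $|\nabla_v u|_0$, $|D_v^2 u|_0$ on $Q_r(z_0)$ weighted by negative powers of $r$), and the key perturbation $\tr((A - A(z_0)) D_v^2 u)\phi$. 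Since $|A(z) - A(z_0)| \leq |A|_{\alpha,Q_1} r^\alpha$ on $Q_r(z_0)$, this perturbation contributes at most $|A|_{\alpha,Q_1}\bigl(r^\alpha [D_v^2 u]_{\alpha,Q_r(z_0)} + |D_v^2 u|_{0,Q_r(z_0)}\bigr)$ to the $C^\alpha$ forcing, where the $r^\alpha$ factor is essential for later absorption.

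Applying Lemma \ref{l:constant-coeffs} to the rescaled $w$ and translating the result back to $Q_{r/2}(z_0)$, then using Lemma \ref{l:interp} to trade each intermediate $|\nabla_v u|_0$, $|D_v^2 u|_0$, and $|\partial_t u|_0$ contribution for $|u|_0$ plus a small multiple of $[u]_{2+\alpha,\beta}'$ absorbed back on the left, I obtain
\[
[u]_{2+\alpha,\beta,Q_{r/2}(z_0)}' \leq C |A|_{\alpha,Q_1} r^\alpha [D_v^2 u]_{\alpha,Q_r(z_0)} + C r^{-N} |u|_{0,Q_1} + C[g]_{\alpha,Q_1}.
\]
Setting $\omega(\rho) := [u]_{2+\alpha,\beta,Q_\rho}'$ for $\rho \in [1/2,1]$, covering $Q_\rho$ by cylinders $Q_{r/2}(z_0)$ (and absorbing the resulting far-field Hölder contributions by interpolation), and choosing $r \sim (R-\rho)|A|_{\alpha,Q_1}^{-1/\alpha}$ with a small absolute constant yields, for $1/2 \leq \rho < R \leq 1$,
\[
\omega(\rho) \leq \tfrac12 \omega(R) + \frac{C\bigl(1 + |A|_{\alpha,Q_1}^{3+\alpha+2/\alpha}\bigr)|u|_{0,Q_1}}{(R-\rho)^N} + C[g]_{\alpha,Q_1},
\]
and Lemma \ref{l:omega} delivers the theorem.

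The main obstacle is the bookkeeping that produces the precise exponent $3+\alpha+2/\alpha$ on $|A|_{\alpha,Q_1}$: the absorption step forces $r \sim |A|_{\alpha,Q_1}^{-1/\alpha}$, so the negative powers $r^{-k}$ from cutoff derivatives, together with the $\eps^{-q}$ factors from Lemma \ref{l:interp}, accumulate into powers of $|A|_{\alpha,Q_1}$ when all intermediate seminorms are traded for $[u]_{2+\alpha,\beta}'$ and $|u|_0$. The non-symmetric Galilean structure adds the subtlety that the $C^\alpha$ bound on the cutoff commutator must be controlled in each of $[\cdot]_{\alpha,Q}$, $[\cdot]_{(2+\alpha)/3,x,Q}$, and $[\cdot]_{\beta,t,Q}$ separately, although fortuitously Lemma \ref{l:constant-coeffs} produces estimates in exactly these three seminorms.
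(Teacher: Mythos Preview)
Your proposal is correct and follows essentially the same freezing-coefficients argument as the paper: localize by a cutoff at small scale, apply Lemma~\ref{l:constant-coeffs} with coefficients frozen at the center, use $|A(\cdot)-A(z_0)|\le |A|_{\alpha}\theta^\alpha$ on the support for absorption, interpolate via Lemma~\ref{l:interp}, and close with Lemma~\ref{l:omega}. The only organizational difference is that the paper avoids your covering-plus-far-field step by directly choosing a near-maximizing pair $z,z'\in Q_r$ and localizing with the cutoff only when $\rho(z,z')<\theta$, handling the case $\rho(z,z')\ge\theta$ by interpolation; this is slightly cleaner but equivalent in substance.
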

\begin{proof}
For $r\in (0,1]$, recall that
\[ [u]_{2+\alpha,\beta,Q_r}' = [D_v^2 u]_{\alpha,Q_r} + [u]_{(2+\alpha)/3,x,Q_r} + [u]_{\beta,t,Q_r}.\]
Let $r\in [\frac 1 4, \frac 3 4]$ be arbitrary. %Assume for now that $B(z), C(z)\equiv 0$.
 For $1\leq i,j\leq d$, pick $z,z'\in Q_{r}$ such that 
\[\frac{|\partial_{v_iv_j} u(z) - \partial_{v_iv_j} u(z')|}{\rho(z,z')^\alpha} \geq \frac 1 2 [\partial_{v_iv_j} u]_{\alpha,Q_{r}}.\]
Let $\theta\in (0,1/8)$ be a constant, to be chosen later. If $\rho(z,z') \geq \theta$, then by the interpolation inequalities in Lemma~\ref{l:interp}, 
\begin{equation}\label{eq:chris3}
	[\partial_{v_iv_j} u]_{\alpha,Q_{r}}
		\leq 2\theta^{-\alpha} |D_v^2 u|_{0,Q_{r}} \leq \frac{1}{12d^2} [u]_{2+\alpha,\beta,Q_{r}}' + C\theta^{-2}|u|_{0,Q_{r}}.
\end{equation}
On the other hand, if $\rho(z,z') < \theta$, let $\chi$ be a smooth cutoff such that $\chi(\tilde z) = 1$ if $\rho(\tilde z,z') < \theta$ and $\chi(\tilde z) = 0$ if $\rho(\tilde z,z') \geq 2\theta$. We can choose $\chi$ such that 
\[\begin{split}
	&|\nabla_v \chi|_{0,Q_1} \lesssim \theta^{-1},
	\quad [\nabla_v \chi]_{0,Q_1} \lesssim \theta^{-1-\alpha},
	\quad |\partial_t \chi+v\cdot \nabla_x \chi|_{0,Q_1} + |D_v^2 \chi|_{0,Q_1} \lesssim \theta^{-2},\\
	&\text{ and } 
	\quad [\partial_t \chi+v\cdot \nabla_x \chi]_{\alpha,Q_1} + [D_v^2 \chi]_{\alpha,Q_1} \lesssim \theta^{-2-\alpha}.
\end{split}\]% \quad |\nabla_x \xi|\leq C\theta^{-3},\]
Using Lemma \ref{l:constant-coeffs}, we now have
\begin{align*}
[\partial_{v_iv_j} u]_{\alpha,Q_{r}} &\leq 2[\chi u]_{2+\alpha,\beta,Q_{r+2\theta}}'\\
&\lesssim [\partial_t (\chi u) + v\cdot \nabla_x (\chi u) - L(z')(\chi u)]_{\alpha,Q_{r+2\theta}}\\
&\lesssim [\partial_t(\chi u) + v\cdot \nabla_x (\chi u) - L(\chi u)]_{\alpha,Q_{r+2\theta}} + [(L - L(z'))(\chi u)]_{\alpha,Q_{r+2\theta}}.
\end{align*}
Let $R = r+2\theta$. To estimate the first term on the last line, note that
\[\partial_t(\chi u) + v\cdot \nabla_x (\chi u) - L(\chi u) = \chi g + u(\partial_t + v\cdot \nabla_x -L)\chi - 2(A(z)\nabla_v u)\cdot \nabla_v \chi.\]
By the interpolation inequalities in Lemma \ref{l:interp}, 
\begin{equation}\label{eq:chris1}
\begin{split}
[\partial_t(\chi u) +& v\cdot \nabla_x (\chi u) - L(\chi u)]_{\alpha,Q_{R}} \lesssim ([g]_{\alpha,Q_{R}} + (1+|A|_{0,Q_1})(\theta^{-2}[u]_{\alpha,Q_{R}} + \theta^{-1}[\nabla_v u]_{\alpha,Q_{R}}))\\
&\lesssim [g]_{\alpha,Q_{R}} + (1+|A|_{0,Q_1})\left(\theta^{\alpha} [u]_{2+\alpha,\beta,Q_{R}}' +  C\theta^{-2-\alpha(2+\alpha)}|u|_{0,Q_{R}}\right).
\end{split}
\end{equation}
For the second term, note that $(L-L(z'))(\chi u) = \tr((A(\tilde z)-A(z'))D_v^2(\chi u))$ for all $\tilde z \in Q_1$. Since $\spt(\chi) \subset \{\tilde z : \rho(\tilde z,z')\leq 2\theta\}$, we have
\begin{equation}\label{eq:chris2}
\begin{split}
[(L-L(z'))(\chi u)]_{\alpha,Q_R} &\lesssim [A]_{\alpha,Q_1} \theta^\alpha \left([D^2_v u]_{\alpha,Q_R} + |D_v^2 u|_{0,Q_R}\right)\\
&\lesssim [A]_{\alpha,Q_1} \theta^\alpha \left( [u]_{2+\alpha,\beta,Q_R}' + \theta^{-2}|u|_{0,Q_R}\right),
\end{split}
\end{equation}
using the interpolation inequalities again. Combining~\eqref{eq:chris1} and~\eqref{eq:chris2}, we obtain, when $\rho(z,z') <\theta$, 
\begin{equation}\label{eq:chris4}
	[\partial_{v_iv_j} u]_{\alpha,Q_r}
		\lesssim |A|_{\alpha,Q_1}\theta^\alpha \left([u]_{2+\alpha,\beta,Q_R}' + [g]_{\alpha,Q_R} + \theta^{-p}|A|_{\alpha,Q_1}|u|_{0,Q_1}\right),
\end{equation}
with $p = 2+\alpha(2+\alpha)$.

The combination of~\eqref{eq:chris3} and~\eqref{eq:chris4} implies that, for any fixed $\theta\in(0,1/8)$,
\[[\partial_{v_iv_j} u]_{\alpha,Q_r} \leq \left(C|A|_{\alpha,Q_1}\theta^\alpha+ \frac{1}{12d^2}\right) [u]_{2+\alpha,\beta,Q_R}' + C[g]_{\alpha,Q_R} + C\theta^{-p}|A|_{\alpha,Q_1}|u|_{0,Q_1}.\]
Summing over $i$ and $j$, and applying a similar argument to $[u]_{(2+\alpha)/3,x,Q_r}$ and $[u]_{\beta,t,Q_r}$, we obtain
\[[u]_{2+\alpha,\beta,Q_r}' \leq \left(C|A|_{\alpha,Q_1}\theta^\alpha+ \frac 1 4\right) [u]_{2+\alpha,\beta,Q_R}' + C[g]_{\alpha,Q_R} + C\theta^{-p}|A|_{\alpha,Q_1}|u|_{0,Q_1}.\]
Fix $\theta_0>0$ such that $C|A|_{\alpha,Q_1}\theta^\alpha < 1/4$ for all $\theta\in (0,\theta_0)$. Then, for each $R\in (r,r+2\theta_0)$, we have
\[[u]_{2+\alpha,\beta,Q_r}' \leq \frac 1 2 [u]_{2+\alpha,\beta,Q_R}' + C[g]_{Q_R} + C(R-r)^{-p}|A|_{\alpha,Q_1}|u|_{0,Q_1}.\]
%Next, if $B(z)\not\equiv 0$ or $C(z)\not\equiv 0$, we apply the previous case:
%\[ [u]_{2+\alpha,Q_r}' \leq \frac 1 2 [u]_{2+\alpha,Q_R} + C([g + B(z)\cdot \nabla_v u + C(z)u]_{\alpha,Q_R} + C(R-r)^{-p}|u|_{0,Q_1}).\]
%Using interpolation inequalities, we have
%\begin{align*}
%[u]_{2+\alpha,Q_r}' &\leq \frac 3 4 [u]_{2+\alpha,Q_R}' + C[g]_{\alpha,Q_1} + C(R-r)^{-p}|u|_{0,Q_1},
%\end{align*}
%for each $R\in (r,r+2\theta_0)$, where the constants depend on the H\"older norms of $B(z)$ and $C(z)$.

Recall that $r\in [\frac 1 4, \frac 3 4]$ was arbitrary. Lemma \ref{l:omega} with $\omega(s) = [u]_{2+\alpha,\beta,Q_s}'$, $r_0 = 1/2$, and $r_1 = 1/2+2\theta_0$ implies
\[[u]_{2+\alpha,\beta,Q_r}' \leq  C([g]_{\alpha,Q_1} + (R-r)^{-p}|A|_{\alpha,Q_1}|u|_{0,Q_1}),\]
for each $\frac 1 2 \leq r < R \leq \frac 1 2 + 2\theta_0$. Choose $r=\frac 1 2$ and $R = \frac 1 2 + \theta_0$, and the proof is complete.
\end{proof}

Next, we extend the estimate of Lemma \ref{l:dt-dx} to the variable-coefficient case. Here, we need to assume $A(z)$ in the operator $L$ is in $C^{1+\alpha}(Q_1)$.

\begin{theorem}\label{t:higher}
Assume that $D_v^3 u, \nabla_x u, \partial_t u\in C^{\alpha}(Q_1)$. Then 
\[ [u]_{3+\alpha,Q_{1/2}}''\leq C \left(|g|_{1+\alpha,Q_1} + |A|_{1+\alpha,Q_1}^{5+\alpha+6/\alpha}|u|_{0,Q_1}\right),\]
where \textup{$g := \partial_t u + v\cdot \nabla_x u - \tr(AD_v^2u)$}. The constant $C$ depends on $d, \alpha, \lambda$, and $\Lambda$.
\end{theorem}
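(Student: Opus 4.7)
The plan is to mirror the proof of Theorem \ref{t:Schauder}, but at one higher derivative level, invoking the second half of Lemma \ref{l:constant-coeffs} in place of the first. Fix $r \in [\tfrac14, \tfrac34]$ and a small parameter $\theta \in (0, \tfrac18)$ to be chosen. Choose a pair of points $z, z' \in Q_r$ that (up to a factor of $2$) realize one of the three $C^\alpha$ seminorms comprising $[u]_{3+\alpha, Q_r}''$. If $\rho(z, z') \geq \theta$, then that seminorm is controlled by $C\theta^{-\alpha}$ times the $L^\infty$ norm of the corresponding third-order derivative of $u$, which via Lemma \ref{l:interp} and Theorem \ref{t:Schauder} is dominated by $C_\theta(|g|_{\alpha,Q_1} + |A|_{\alpha,Q_1}^{3+\alpha+2/\alpha}|u|_{0,Q_1})$. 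Otherwise, let $\chi$ be a smooth cutoff equal to $1$ on $\{\rho(\cdot, z') < \theta\}$ and supported in $\{\rho(\cdot, z') < 2\theta\}$, with the natural $\rho$-based bounds $|\partial_v^k \chi|_{0} \lesssim \theta^{-k}$ and $|(\partial_t + v \cdot \nabla_x) \chi|_{0} \lesssim \theta^{-2}$, and corresponding $C^{1+\alpha}$ seminorms. Then $\chi u$ solves the frozen equation
\[\partial_t (\chi u) + v \cdot \nabla_x (\chi u) - \tr(A(z') D_v^2 (\chi u)) = \tilde g,\]
with
\[\tilde g := \chi g + u(\partial_t + v \cdot \nabla_x - L)\chi - 2 \langle A \nabla_v u, \nabla_v \chi \rangle - \tr\bigl((A - A(z')) D_v^2 (\chi u)\bigr).\]
Applying Lemma \ref{l:constant-coeffs} on $Q_R$ with $R = r + 2\theta$ (after the standard rescaling that turns the ellipticity constants universal) gives $[u]_{3+\alpha, Q_r}'' \lesssim [\chi u]_{3+\alpha, Q_R}'' \lesssim [\tilde g]_{1+\alpha, Q_R}$.

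The core of the proof is bounding $[\tilde g]_{1+\alpha, Q_R}$ term by term. The piece $\chi g$ contributes $|g|_{1+\alpha, Q_R}$ with a cutoff-dependent prefactor. The transport/diffusion piece $u(\partial_t + v \cdot \nabla_x - L)\chi$ and the first-order coupling $\langle A\nabla_v u, \nabla_v\chi\rangle$ each involve at most one $v$-derivative of $u$ paired with bounded quantities, so the product rule in $C^{1+\alpha}$ plus Lemma \ref{l:interp} reduces them to a small multiple of $[u]_{3+\alpha,Q_R}''$ plus lower-order terms involving $|u|_{0,Q_1}$ and the second-order Hölder seminorms of $u$ (which, in turn, Theorem \ref{t:Schauder} controls by $|g|_{\alpha,Q_1} + |A|_{\alpha,Q_1}^{3+\alpha+2/\alpha}|u|_{0,Q_1}$). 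The crucial contribution is the commutator $\tr((A - A(z'))D_v^2(\chi u))$: on $\supp \chi$ we have $|A - A(z')|_{0} \lesssim [A]_\alpha \theta^\alpha$ and $[A - A(z')]_{1+\alpha, Q_R} \lesssim |A|_{1+\alpha, Q_1}$, and expanding via the product rule produces the key term $\lesssim \theta^\alpha |A|_{1+\alpha, Q_1} [u]_{3+\alpha, Q_R}''$ together with errors that fall into the lower-order bucket.

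Collecting the estimates yields, for every $\tfrac14 \leq r < R \leq \tfrac34$ with $R-r$ comparable to $\theta$,
\[[u]_{3+\alpha, Q_r}'' \leq \bigl(C |A|_{1+\alpha, Q_1} \theta^\alpha + \tfrac14\bigr) [u]_{3+\alpha, Q_R}'' + C |g|_{1+\alpha, Q_1} + C(R-r)^{-q}\, |A|_{1+\alpha, Q_1}^{\sigma}\, |u|_{0, Q_1},\]
with explicit $q$ and $\sigma$. Fixing $\theta_0 \approx |A|_{1+\alpha,Q_1}^{-1/\alpha}$ so that the prefactor of $[u]_{3+\alpha,Q_R}''$ is at most $\tfrac12$, and applying Lemma \ref{l:omega} with $\mu=\tfrac12$, $r_0 = \tfrac12$, and $r_1 = \tfrac12 + \theta_0$ absorbs the $[u]_{3+\alpha,Q_R}''$ term and gives the claim with the final exponent $q/\alpha + \sigma$ on $|A|_{1+\alpha,Q_1}$; the bookkeeping below is designed to produce $5 + \alpha + 6/\alpha$.

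The principal obstacle is precisely this exponent bookkeeping. One must check that every term arising from the product rule on $\chi$ and every error from the commutator decomposition is either a small multiple of $[u]_{3+\alpha,Q_R}''$ or sits in the $|g|_{1+\alpha} + C_\theta |u|_0$ bucket, and that each invocation of Theorem \ref{t:Schauder} to dispose of intermediate second-order seminorms contributes at most a $|A|_{1+\alpha}^{3+\alpha+2/\alpha}$ factor, so that after multiplying by $\theta^{-q}\approx |A|_{1+\alpha}^{q/\alpha}$ the total does not exceed $5 + \alpha + 6/\alpha$. A secondary subtlety is that $[\nabla_x u]_\alpha$ and $[\partial_t u]_\alpha$ both belong to $[u]_{3+\alpha}''$, so the cutoff/freezing argument must be run in parallel for all three seminorms, and the Hölder norms of $(\partial_t + v\cdot\nabla_x)\chi$ must be expressed with respect to the kinetic quasimetric $\rho$ rather than the Euclidean distance, using that $\chi$ depends on the Galilean-invariant variable $\mathcal S_{z'}^{-1}(\cdot)$.
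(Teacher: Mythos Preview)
Your proposal follows exactly the paper's approach, whose own proof is literally one sentence instructing the reader to mirror the argument of Theorem \ref{t:Schauder}, together with the resulting iteration inequality. One minor slip: in the case $\rho(z,z')\geq\theta$, Theorem \ref{t:Schauder} does not control $|\partial_t u|_0$, $|\nabla_x u|_0$, or $|D_v^3 u|_0$; rather, Lemma \ref{l:interp} interpolates each of these directly between $[u]''_{3+\alpha,Q_r}$ and $|u|_{0,Q_r}$, and this is precisely what produces the $\tfrac14$ contribution in your iteration inequality (paralleling \eqref{eq:chris3}). Your separate invocation of Theorem \ref{t:Schauder} to dispose of intermediate second-order seminorms in the $\rho<\theta$ case is a legitimate detour, though the paper would simply interpolate those between $[u]''_{3+\alpha}$ and $|u|_0$ as well.
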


\begin{proof}
For $r\in (0,1]$, recall
\[[u]_{3+\alpha,Q_r}'' = [\partial_t u]_{\alpha,Q_r} + [\nabla_x u]_{\alpha,Q_r} + [D_v^3 u]_{\alpha,Q_r}.\]
With $r$, $\theta$, and $R$ as in the proof of Theorem \ref{t:Schauder}, we can follow the argument of that proof to show
%\[ [\partial_t u]_{\alpha,Q_r} + [\nabla_x u]_{\alpha,Q_r} \leq \left( C[A(z)]_{1+\alpha,Q_1} \theta^\alpha + \frac 1 4\right)[u]_{3+\alpha,Q_R}'' + C[g]_{1+\alpha,Q_R} + C\theta^{-p}|u|_{0,Q_1},\]
%for some $p>0$. For each $1\leq i \leq d$, we differentiate equation \eqref{e:variable} in $v_i$ and apply Theorem \ref{t:Schauder} to conclude
%\[ [D_v^3 u]_{\alpha,Q_{r/2}} \leq C([g]_{1+\alpha,Q_r} + [\nabla_x u]_{\alpha,Q_r} + [D_v^2 u]_{\alpha,Q_r}),\]
%since $A(z)\in C^{1+\alpha}(Q_1)$. This implies
\[ [u]_{3+\alpha,Q_{r}}'' \leq \left( C|A|_{1+\alpha,Q_1} \theta^\alpha + \frac 1 4\right)[u]_{3+\alpha,Q_R}'' + C|g|_{1+\alpha,Q_R} + C\theta^{\alpha(4+\alpha)+6}|A|_{1+\alpha,Q_1}|u|_{0,Q_1}.\]
The conclusion of the proof is the same as Theorem \ref{t:Schauder}.
\end{proof}

%\subsection{Schauder estimates for weak solutions}

%In the last two theorems, we have worked with solutions whose pointwise derivatives exist \emph{a priori}; \CH{however, by the following proposition, we only need ``$g$'' to be $C^\alpha$}:
In the previous theorems, we assumed that solutions exist.  This is verified by the following theorem:
%\begin{proposition}\label{p:exist}
%Given $g\in C^\alpha(Q_1)$ and continuous boundary data $\psi$ on the parabolic boundary $\partial_p Q_1$, there exists a weak solution $u\in C_v^{2+\alpha} C_x^{(2+\alpha)/3}C_t^\beta(Q_1)$ of \eqref{e:variable} for any $\beta\in (0,1)$ with $u=\psi$ on $\partial_p Q_1$, and this solution is unique among weak solutions in $H^1(Q_1)$. If $g\in C^{1+\alpha}(Q_1)$, the same conclusion holds with $u\in C_v^{3+\alpha}C_x^{1+\alpha}C_t^{1+\alpha}(Q_1)$.
%\end{proposition}
%\begin{proof}
%By Lemma \ref{l:convolution}, the constant-coefficient equation \eqref{e:simple} is solvable with $u\in C_v^{2+\alpha}C_x^{(2+\alpha)/3}C_t^\beta(Q_1)$ if $g\in C^\alpha(Q_1)$, and $u\in C_v^{3+\alpha}C_x^{1+\alpha}C_t^{1+\alpha}(Q_1)$ if $g\in C^{1+\alpha}(Q_1)$. The existence of $u$ solving \eqref{e:variable} with the same regularity follows from the estimates of Theorems \ref{t:Schauder} and \ref{t:higher} via the method of continuity (see, for example, \cite[Theorem 5.2]{gilbargtrudinger}).
%
%The uniqueness follows from the maximum principle for weak subsolutions of \eqref{e:variable} in $H^1(Q_1)$, which is more or less well known. See \cite[Proposition A.1]{cameron2017landau} for a proof.
%\end{proof}
\begin{proposition}\label{p:exist}
Given $g \in C^{\alpha}((-1,0]\times \R^d\times\R^d)$ with compact support in $Q_1$, then there exists a unique weak solution $u$ in $C^{2+\alpha}|((-1,0]\times\R^d\times\R^d)$ of~\eqref{e:variable}.  Furthermore, $[u]_{2+\alpha,\beta,Q_1}'<\infty$. %, and $u \in C^\beta_v C^{(2+\alpha)/3}_x C^\beta_t$.  
If $g\in C^{1+\alpha}(Q_1)$, the same conclusion holds with $[u]_{3+\alpha,Q_1}'' <\infty$, where $[\cdot]_{2+\alpha,\beta,Q_1}'$ and $[\cdot]_{3+\alpha,Q_1}''$ are as in Definition \textup{\ref{d:prime}}. %\CH{Maybe we could name the two spaces corresponding to functions having these derivatives.  We'll have to correct the statements of 2.4, 2.8, and 2.9 accordingly....}%$u \in C^{2+\alpha}(Q_1)\cap C_t^\beta(Q_1)$ for any $\beta \in (0,1)$ \CH{Don't like this notation for the space...}.  If $g\in C^{1+\alpha}(Q_1)$, the same conclusion holds with $u\in C_v^{3+\alpha}C_x^{1+\alpha}C_t^{1+\alpha}(Q_1)$ \CH{Again with the notation for the space...}. {\color{blue} I'm fine with changing it, but I don't know what to...} \CH{Me too.  We should discuss this...}
\end{proposition}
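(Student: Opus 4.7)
The plan is to construct $u$ as the limit of classical solutions to regularized problems and then to invoke the a priori Schauder bound of Theorem~\ref{t:Schauder} uniformly along the approximation. First, extend $A$ from $Q_1$ to a $C^\alpha$ function on $(-1,0]\times\R^d\times\R^d$ preserving its $C^\alpha$ norm and the ellipticity constants, and mollify: $A_\eps := A*\psi_\eps$, $g_\eps := g*\psi_\eps$ for a standard mollifier $\psi_\eps$. These are smooth and uniformly elliptic ($\tfrac{\lambda}{2}I\leq A_\eps\leq 2\Lambda I$), have $|A_\eps|_{\alpha,Q_1}$ and $[g_\eps]_{\alpha,Q_1}$ controlled by the corresponding norms of $A, g$, and converge locally uniformly to $A, g$.

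For the approximating equation with data $A_\eps, g_\eps$ and zero initial data at $t=-1$, construct a $C^\infty$ classical solution $u_\eps$ via a vanishing-viscosity argument: add a uniformly parabolic perturbation $-\delta(\Delta_x+\partial_t^2)u_\eps$, solve using standard linear parabolic theory, and remove the viscosity using hypoelliptic regularity à la Hörmander, which applies since $A_\eps$ is smooth and strictly positive in $v$. The maximum principle applied to the affine supersolution $w(t) := (t+1)|g|_{0,Q_1}$, which satisfies $(\partial_t+v\cdot\nabla_x-\tr(A_\eps D_v^2))w = |g|_{0,Q_1} \geq g_\eps$ pointwise and vanishes at $t=-1$, yields $|u_\eps|_0\leq |g|_{0,Q_1}$ independently of $\eps$.

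Since each $u_\eps$ is smooth, $[u_\eps]_{2+\alpha,\beta,Q_r}'$ is finite a priori for every $r>0$, so Theorem~\ref{t:Schauder}---applied on a finite cover of $Q_1$ by small kinetic cylinders $Q_\delta(z_i)$ and rescaled via $\delta_r$ to the unit cylinder---yields the bound
\[[u_\eps]_{2+\alpha,\beta,Q_1}' \leq C\bigl([g]_{\alpha,Q_1} + |A|_{\alpha,Q_1}^{3+\alpha+2/\alpha}|g|_{0,Q_1}\bigr),\]
uniformly in $\eps$. By Arzelà--Ascoli, a subsequence of $u_\eps$ converges in $C^{2+\alpha'}_{loc}$ for every $\alpha'<\alpha$ to a function $u$ that solves \eqref{e:variable} classically (hence weakly) and, by lower semicontinuity, satisfies $[u]_{2+\alpha,\beta,Q_1}'<\infty$. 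A standard $L^2$ energy estimate (multiply by $u_\eps$, integrate, use $\int v\cdot\nabla_x u_\eps^2 = 0$) produces the $H^1$ bound, and the same estimate applied to the difference of two weak solutions gives uniqueness. The $g\in C^{1+\alpha}$ case is identical, with Theorem~\ref{t:higher} replacing Theorem~\ref{t:Schauder}.

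The principal obstacle is the second step, construction of the smooth approximations $u_\eps$: this is a standard but technical combination of uniformly parabolic regularization and hypoelliptic regularity, and some care is required to set up the approximation so that the limit lies in the weak solution class globally on the strip (not just locally in $Q_1$) and matches the prescribed zero initial data.
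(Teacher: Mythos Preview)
Your approximation-and-compactness strategy is a valid route, but it is genuinely different from the paper's proof, which is a direct method-of-continuity argument: one defines $E_\theta u := \partial_t u + v\cdot\nabla_x u - (1-\theta)\Delta_v u - \theta Lu$ on a suitable Banach space, uses the a~priori Schauder estimate of Theorem~\ref{t:Schauder} to get $\|u\|\lesssim\|E_\theta u\|$ uniformly in $\theta$, notes that $E_0$ is onto because of the explicit fundamental solution~\eqref{e:Gamma}, and concludes via the standard continuity lemma that $E_1$ is onto as well. This bypasses entirely the construction of smooth approximate solutions, which you correctly flag as the principal technical obstacle in your outline.

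Your route works in principle, but two points deserve comment. First, the regularizing term you propose, $-\delta(\Delta_x+\partial_t^2)$, is not a good choice: adding $\partial_t^2$ turns the evolution equation into one that is second order in time, so it is no longer parabolic and requires data at both $t=-1$ and $t=0$. The standard fix is to add only $\delta\Delta_x$, making the equation genuinely uniformly parabolic in $(x,v)$; alternatively, for smooth $A_\eps$ one can appeal directly to the parametrix construction for hypoelliptic operators and skip the viscosity step altogether. Second, since $g$ has compact support in $Q_1$ and $u$ is defined on the full strip with zero initial data, your covering argument for the global Schauder bound on $Q_1$ does go through, but it is worth saying explicitly that one enlarges the cylinders slightly beyond $Q_1$ and uses that $g$ vanishes there. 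The method of continuity avoids both of these bookkeeping issues, which is what it buys here.
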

\begin{proof}
Fix any $\beta \in (0,1)$ and assume that the matrix $A$ is uniformly bounded and coercive on $\R\times\R^d \times \R^d$. Define the norm
\[
	\|u\|_{\mathcal{B}} := \max \left\{ |u|_{\alpha, Q_1(z_0)}+[u]_{2+\alpha,\beta, Q_1(z_0)} + [\partial_tu + v\cdot \nabla_x u]_{\alpha, Q_1(z_0)} : z_0 = (0,x_0,v_0), x_0,v_0\in\R^d\right\},
\]
and the Banach space
\[
	\mathcal{B} := \{u \in C^{2+\alpha}([-1,0]\times \R^d\times \R^d) : \|\cdot\|_{\mathcal B} <\infty\},\]
endowed with $\|\cdot\|_{\mathcal B}$, 
%Define the Banach spaces
%\[
%	\mathcal{B} := \{u \in C^{\alpha}([-1,0]\times \R^d\times \R^d) : u(-1,\cdot,\cdot)\equiv 0, D^2_v u \in C^\alpha, (\partial_t + v\cdot \nabla_x) u \in C^\alpha\}\]
%endowed with norm
%\[
%	\|u\|_{\mathcal{B}} := \max \left\{ |u|_{\alpha, Q_1(z_0)}+[D^2_vu]_{\alpha, Q_1(z_0)} + [\partial_tu + v\cdot \nabla_x u]_{\alpha, Q_1(z_0)} : z_0 = (0,x_0,v_0),\right\}.
%\]
%\CH{STAN - I thought a lot about whether this is a Banach space and I feel comfortable with it.  Let me know if you agree} 
and
\[
	\mathcal{V} := \{u \in C^\alpha([-1,0]\times \R^d \times \R^d) : u(-1,\cdot,\cdot) \equiv 0\},
\]
endowed with the obvious norm.

For any $\theta \in [0,1]$, define the operator $E_\theta: \mathcal{B} \to \mathcal{V}$ by
\[
	E_\theta u := u_t + v\cdot \nabla_x u - (1-\theta) \Delta_v u - \theta Lu.
\]
From \Cref{t:Schauder}, we see that
\[
	\|u\|_{\mathcal{B}} \lesssim \|E_\theta u\|_{\mathcal{V}}
\]
for all $u\in \mathcal{B}$.  Linearity and the above inequality imply that $E_\theta$ is injective.  Also, from~\eqref{e:Gamma}, %Lemma~\ref{l:Gamma-estimates}, 
we see that $E_0$ is onto.  Applying the method of continuity as in~\cite[Theorem~5.2]{gilbargtrudinger}, we obtain that $E_1$ is onto as well.  Hence, $E_1$ is invertible. 

%To conclude we simply specialize to functions $g$ which have compact support in $Q_1$ and argue as follows.  Let $\chi$ be a cut-off function which is uniformly equal to $1$ on $Q_2$ and let $\tilde A$ be a uniformly bounded and coercive function that is equal to $A$ on $Q_2$, we see that
%\[
%	(\chi u)_t + v\cdot \nabla_x (\chi u) - \tr(\tilde A D^2_v (\chi u))
%		= g
%\]
%on $Q_1$.  Hence, our argument above implies that $(\chi u) \in \mathcal{B}$, which yields the claimed regularity.
%The uniqueness follows from the maximum principle for weak subsolutions of \eqref{e:variable} in $H^1(Q_1)$, which is well known; see \cite[Proposition A.1]{cameron2017landau} for a proof.  
This finishes the first claim.  The same argument applies in the second case when $g$ has one more derivative, using Theorem \ref{t:higher}.
%{\color{blue} Does this argument work if $g$ does not have compact support in $Q_1$? Because we need that case in order to apply 2.8 and 2.9. In particular, we need to let $g$ be nonzero up to $t=-1$.} \CH{Unless I'm missing something, 2.8 and 2.9 apply perfectly well as long as the coefficients of the equation are uniformly bounded and elliptic and the function $g$ has uniform bounds and is zero at $t=-1$.  Using the cut-off thing above, our equation satisfies those conditions.}
\end{proof}

%Given a (weak) solution $u\in C^0(\overline{Q_1})$, Proposition \ref{p:exist} implies $u$ is smooth enough to apply the estimates of Theorem \ref{t:Schauder} if $g\in C^\alpha(Q_1)$ and Theorem \ref{t:higher} if $g\in C^{1+\alpha}(Q_1)$. We collect the results of this section in the following theorem:
We collect all estimates above and use the equation that $u$ solves in order to derive an estimate on $(\partial_t + v\cdot\nabla_x)u$ to obtain the following theorem:
\begin{theorem}\label{t:weak-schauder}
Let $u$ be such that %\in C^0(\overline Q_1)$ be such that 
\[
	\partial_t u + v\cdot \nabla_x u - L u = g
\]
in $Q_1$, with \textup{$L = \tr(AD_v^2 u)$} and $\lambda I \leq A \leq \Lambda I$.
\begin{enumerate}
\item[\textup{(a)}] If $g, A\in C^{\alpha}(Q_1)$ % \CH{and $u \in C^{2+\alpha}(Q_1)$}
for some $\alpha \in (0,1)$, we have the estimate
\[ [D_v^2u]_{\alpha,Q_{1/2}} + [u]_{(2+\alpha)/3,x,Q_{1/2}} + [u]_{\beta,t,Q_{1/2}}
	+[(\partial_t + v\cdot\nabla_x) u]_{\alpha,Q_{1/2}}\lesssim ( [g]_{\alpha,Q_1} + |A|_{\alpha,Q_1}^p|u|_{0,Q_1}),\]
for any $\beta\in (0,1)$. 
\item[\textup{(b)}] If $g, A\in C^{1+\alpha}(Q_1)$ for some $\alpha \in (0,1)$, then 
\[ [\partial_t u]_{\alpha,Q_{1/2}}+ [\nabla_x u]_{\alpha,Q_{1/2}} + [D_v^3 u]_{\alpha,Q_{1/2}} \lesssim (|g|_{1+\alpha,Q_1}  + |A|_{1+\alpha,Q_1}^q|u|_{0,Q_1}).\]
\end{enumerate}
The implied constants depend on $d$, $\alpha$, $\beta$, $\lambda$, and $\Lambda$. The exponents $p,q >0$ depend only on $\alpha$. 
\end{theorem}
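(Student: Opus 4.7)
The plan is to reduce the statement for the weak solution $u$ to the a priori estimates of \Cref{t:Schauder,t:higher}, which assume the relevant H\"older seminorms on derivatives are finite in advance. I will split $u = w + v$, with $w$ absorbing the source via \Cref{p:exist} and $v$ a weak solution of the homogeneous equation, and then bootstrap regularity of each piece.

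First, extend the coefficient matrix $A$ from $Q_1$ to all of $\R^{2d+1}$, preserving its $C^{\alpha}$ (resp.\ $C^{1+\alpha}$) regularity and the ellipticity bounds $\lambda I\le A \le \Lambda I$; this can be done by a standard Whitney-type extension. Choose a smooth cutoff $\chi$ with $\chi\equiv 1$ on $\overline{Q_{3/4}}$ and $\supp\chi\subset Q_{7/8}$. \Cref{p:exist}, applied with source $\chi g\in C^\alpha$ (resp.\ $C^{1+\alpha}$) of compact support in $Q_1$, produces a classical weak solution $w$ on $(-1,0]\times \R^{2d}$ with zero initial data at $t=-1$ satisfying $[w]'_{2+\alpha,\beta,Q_1}<\infty$ (resp.\ $[w]''_{3+\alpha,Q_1}<\infty$). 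Thus \Cref{t:Schauder} (resp.\ \Cref{t:higher}) applies directly to $w$, yielding the desired estimate for $w$ on $Q_{1/2}$ in terms of $[g]_{\alpha,Q_1}$ and $|A|_{\alpha,Q_1}^p\,|w|_{0,Q_1}$. The remainder $v := u - w \in C^0(\overline{Q_{3/4}})\cap H^1(Q_{3/4})$ then solves the homogeneous equation $\partial_t v + v\cdot\nabla_x v - Lv = 0$ on $Q_{3/4}$.

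To upgrade $v$ to have the a priori regularity required by \Cref{t:Schauder,t:higher}, I mollify $A$ to smooth $A^{(k)}\to A$ in $C^\alpha$ (resp.\ $C^{1+\alpha}$), with uniform ellipticity, and use the classical existence theory for smooth hypoelliptic equations (H\"ormander's original construction) to build smooth classical solutions $v^{(k)}$ of the corresponding homogeneous problem with boundary and initial data approximating those of $v$. Each $v^{(k)}$ is smooth, so \Cref{t:Schauder,t:higher} apply and give Schauder bounds on slightly smaller cylinders depending only on $|A^{(k)}|_{\alpha}^p\,|v^{(k)}|_{0}$ (or the $C^{1+\alpha}$ analogue). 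These bounds are uniform in $k$ since $v^{(k)}\to v$ in $C^0$ by uniqueness of weak $H^1$ solutions (the maximum principle cited at the end of the proof of \Cref{p:exist}). Passing to the limit via Arzel\`a--Ascoli, iterated over a telescoping sequence of nested cylinders shrinking from $Q_{3/4}$ to $Q_{1/2}$, transfers the bound to $v$. Adding the estimates for $v$ and $w$ yields (a); (b) is identical with \Cref{t:higher} replacing \Cref{t:Schauder}.

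The main obstacle will be this approximation step for $v$: the lateral boundary of $Q_{3/4}$ is not a classical characteristic surface for the kinetic Fokker--Planck operator, so constructing $v^{(k)}$ with matching lateral traces and proving $v^{(k)}\to v$ requires some care. A clean workaround is to first extend $v$ across the lateral boundary of $Q_{3/4}$ (by reflection or by solving a slightly larger problem), mollify, and then solve the Cauchy problem for the smoothed coefficients on all of $\R^{2d}$; the maximum principle controls the error in $C^0$ on slightly smaller cylinders, which is all that is needed to pass to the limit in the Schauder seminorms.
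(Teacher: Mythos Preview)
The paper's own argument is the single sentence preceding the theorem: ``\Cref{p:exist} implies $u$ is smooth enough to apply the estimates of \Cref{t:Schauder} and \Cref{t:higher}.'' No splitting $u=w+v$ and no approximation of a homogeneous piece appear; the paper simply regards \Cref{p:exist} (existence of a regular solution together with uniqueness in the weak $H^1$ class) as already forcing the given weak solution to possess the finite seminorms required by \Cref{t:Schauder,t:higher}, after which those theorems give the quantitative bound. Your route is thus genuinely different and considerably more elaborate.

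Your decomposition and the treatment of $w$ via \Cref{p:exist} are fine. The gap is in the approximation step for the homogeneous piece $v$. Two concrete issues. First, you invoke ``classical existence theory'' to produce $v^{(k)}$ solving the homogeneous $A^{(k)}$-equation on $Q_{3/4}$ with prescribed boundary and initial data matching those of $v$; but the boundary value problem for kinetic Fokker--Planck on a bounded cylinder is not classical (only part of the lateral boundary carries incoming data), and you do not supply a well-posedness result here. Second, the claim ``$v^{(k)}\to v$ in $C^0$ by uniqueness of weak $H^1$ solutions'' does not follow: $v^{(k)}$ and $v$ satisfy \emph{different} equations, so the difference $v-v^{(k)}$ solves an equation with source $\mathrm{tr}\bigl((A^{(k)}-A)D_v^2 v^{(k)}\bigr)$, and controlling this in sup-norm up to the boundary of $Q_{3/4}$ requires exactly the uniform $D_v^2$ bound on $v^{(k)}$ that the Schauder estimate only provides on strictly interior cylinders. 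Your final-paragraph workaround does not close this loop either: a mollification of a $C^0$ extension of $v$ is not a solution of any equation, and the global Cauchy solution sharing its initial slice has no reason to track $v$ inside $Q_{3/4}$, since $v$ there is determined by its full parabolic boundary data, not just the bottom.

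In short, the paper's argument is shorter because it leans entirely on the uniqueness half of \Cref{p:exist} to identify the weak solution with the regular one, whereas your approach trades that step for an approximation scheme whose convergence is not established.
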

%\begin{proof}
%Sketch: 
%
%Claim: this $u$ is unique, if continuous boundary data on the parabolic boundary of $Q_1$ is prescribed.
%
%This claim (almost) follows from Proposition A.1 in \cite{cameron2017landau}.
%
%
%Assume that $u$ is continuous, and let $A^k$, $B^k$, and $s^k$ be smooth coefficients converging to $A$, $B$, and $s$ uniformly on $Q_1$, with $\lambda I \leq A^k \leq \Lambda I$ for all $k$. By the claim, the approximating problem
%\begin{equation}\label{e:approx}
% \partial_t u_k + v\cdot \nabla_x u_k = \nabla_v \cdot (A^k\nabla_v u_k) + B^k \cdot \nabla_v u_k + s^k
%\end{equation}
%with $u_k = u$ on the boundary, has a unique solution for every $k$. By the maximum principle, the $u_k$ are bounded in $L^\infty(Q_1)$, independently of $k$. By Theorem \ref{t:Schauder}, $u_k\in C^{2+\alpha}$, and 
%\begin{equation}\label{e:k}
%[u_k]_{2+\alpha, Q_{1/2}} \leq C \left(|u_k|_{0,Q_1} + [s^k]_{\alpha,Q_1}\right).
%\end{equation}
%This right-hand side is uniformly bounded, so by Arzela-Ascoli, a subsequence of $u_k$ converges in $C^2$ to some limit $w$. Taking $k\to\infty$ in \eqref{e:approx}, we see that $w$ is the unique weak solution $u$ of the original problem. Taking the limit in \eqref{e:k}, we conclude 
%\[ [u]_{2+\alpha,Q_{1/2}} \leq C(|u|_{0,Q_1} + [s]_{\alpha,Q_1}).\]
%
%\end{proof}

\section{Smoothing for weak solutions of the Landau equation}\label{s:landau}

%\subsection{Regularity of the coefficients}
In this section, we apply the estimates of Section \ref{s:schauder} to the Landau equation. The diffusion operator $\tr(\overline a(z)D_v^2 f)$ (or in divergence form, $\nabla_v\cdot (\overline a(z)\nabla_v f)$) is uniformly elliptic in any bounded set, but the ellipticity constants degenerate as $|v|\to \infty$. (See Appendix \ref{s:A}.) To deal with this, we apply a change of variables in a small cylinder around a given point $z_0$, which yields an equation with ellipticity constants that are independent of $z_0$.  In the sequel, we undo this transformation to explicitly see the dependence of the estimates on $|v|$.

The following lemma was first proven in \cite{cameron2017landau} in the case of moderately soft potentials:
\begin{lemma}\label{l:T}
Let $z_0 =(t_0,x_0,v_0)\in \R_+\times \R^{d}\times \R^d$ be such that $|v_0|\geq 2$, and let $T$ be the linear transformation such that
\begin{equation*}
T e = \begin{cases}   |v_0|^{1+\gamma/2} e , & e \cdot v_0 = 0\\
|v_0|^{\gamma/2}e, & e \cdot v_0 = |v_0|.\end{cases}
\end{equation*}
Let $\tilde T(t,x,v) = (t,Tx,Tv)$, and define
\begin{align*}
\mathcal T_{z_0}(t,x,v) &:= \mathcal S_{z_0} \circ \tilde T (t,x,v)\\
& = (t_0+t,x_0+T x + t v_0 ,v_0 + T v).
\end{align*}
Then:
\begin{enumerate}
\item[\textup{(a)}] There exists a constant $C>0$ independent of $v_0\in\R^d\setminus B_2$ such that for all $v\in B_1$,
\[ C^{-1} |v_0| \leq |v_0 + Tv| \leq C |v_0|.\]
\item[\textup{(b)}] Let $f$ be a weak solution of the Landau equation \eqref{e:divergence} satisfying \eqref{e:M0}, \eqref{e:E0}, and \eqref{e:H0}, and if $\gamma < -2$, assume that $f$ satisfies \eqref{e:4thmoment}. Then there exists a radius 
\[r_1 = c_1 %\left(1+ |v_0|^2\right)^{-(1+\gamma/2)_+/2}
		|v_0|^{-(1+\gamma/2)_+}\min\left(1,\sqrt{t_0/2}\right),\] with $c_1$ universal, such that for any $r\in (0,r_1]$, the function $f_{z_0}(t,x,v) := f(\mathcal T_{z_0}(r^2t,r^3x,rv))$ satisfies
\begin{equation}\label{e:isotropic}
\partial_t f_{z_0} + v \cdot \nabla_x f_{z_0} = \nabla_v \cdot\left(\overline A(z)\nabla_v f_{z_0}\right) +  \overline B(z)\cdot \nabla_v f_{z_0} +  \overline C(z) f_{z_0},
\end{equation}
or equivalently,
\begin{equation}\label{e:isotropic-nondivergence}
\partial_t f_{z_0} + v \cdot \nabla_x f_{z_0} = \textup{\tr}\left( \overline A(z)D_v^2 f_{z_0}\right) +  \overline C(z) f_{z_0},
\end{equation}
in $Q_1$, and the coefficients 
\[\begin{split}
	&\overline A(z) = T^{-1}\overline a(\mathcal T_{z_0}(\delta_r(z))) T^{-1},
	 \quad \overline B(z) = rT^{-1}\overline b(\mathcal T_{z_0}(\delta_r(z))),~\text{ and}\\ 
	 &\overline C(z) = r^2\overline c(\mathcal T_{z_0}(\delta_r(z)))
\end{split}\]
satisfy % \CH{THE B POWERS ARE A MESS COME BACK TO THEM BECAUSE I MISSED THE T CONTRIBUTION}
\begin{align*}
\lambda I &\leq \overline A(z) \leq \Lambda I,\\
	|\overline B(z)|
		&\lesssim \begin{cases} 1, &-1\leq \gamma<0 ,\\[2ex]
				|v_0|^{\min\{1+\gamma/2,0\}}\left(1+\|f(t,x,\cdot)\|_{L^\infty(B_\theta(v))}\right)^{-(\gamma+1)/d}, &-2\leq \gamma < -1,\\[2ex]
				|v_0|^{-\gamma/2 - 2 - 2(\gamma +1)/d}\left(1+\|f(t,x,\cdot)\|_{L^\infty(B_\theta(v))}\right)^{-(\gamma+1)/d}, &-d \leq \gamma <-2,
\end{cases}\\
	|\overline C(v)|
		&\lesssim \begin{cases} |v_0|^{-2}\left(1+\|f(t,x,\cdot)\|_{L^\infty(B_\theta(v))}\right)^{-\gamma/d}, &\dfrac{-2d}{d+2}\leq \gamma < 0,\\
|v_0|^{-(2+\gamma)_+ - 2 -2\gamma/d}\left(1+\|f(t,x,\cdot)\|_{L^\infty(B_\theta(v))}\right)^{-\gamma/d}, &-d < \gamma < \dfrac{-2d}{d+2},\end{cases}
\end{align*}
with $\lambda$ and $\Lambda$ universal, and $\theta \lesssim 1+ |v_0|^{-2/d}$. 
\end{enumerate}
\end{lemma}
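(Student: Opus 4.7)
The plan is to prove part (a) by an explicit orthogonal decomposition along $v_0$ and $v_0^\perp$, and then to establish part (b) by first performing the chain-rule computation that shows how \eqref{e:nondivergence} and \eqref{e:divergence} pull back under $\mathcal T_{z_0}\circ \delta_r$, and finally combining this with the anisotropic estimates on $\overline a$, $\overline b$, and $\overline c$ from Appendix \ref{s:A} to read off the claimed bounds on $\overline A$, $\overline B$, and $\overline C$.

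For (a), write $v = \alpha\,\hat v_0 + v_\perp$ with $\alpha = v\cdot\hat v_0$ and $v_\perp\perp v_0$. From the definition of $T$,
\[|v_0 + Tv|^2 = \bigl(|v_0| + |v_0|^{\gamma/2}\alpha\bigr)^2 + |v_0|^{2+\gamma}|v_\perp|^2.\]
Since $\gamma<0$ and $|v_0|\geq 2$ we have $|v_0|^{\gamma/2}\leq 1$ and $|v_0|^{2+\gamma}\leq|v_0|^2$, giving $|v_0+Tv|\lesssim|v_0|$, while the lower bound follows from $(|v_0|+|v_0|^{\gamma/2}\alpha)^2\geq(|v_0|-1)^2\geq|v_0|^2/4$.

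For part (b), writing $(s,y,w)=\mathcal T_{z_0}(\delta_r(z))$ and using that $T$ is symmetric and constant, the chain rule yields
\[\partial_t f_{z_0} + v\cdot\nabla_x f_{z_0} = r^2\bigl[\partial_s f + (v_0+rTv)\cdot \nabla_y f\bigr],\qquad D_v^2 f_{z_0} = r^2\, T\, D_w^2 f\, T.\]
Substituting into \eqref{e:nondivergence} produces \eqref{e:isotropic-nondivergence} with $\overline A = T^{-1}\overline a T^{-1}$ and $\overline C = r^2\overline c$, and an analogous computation applied to \eqref{e:divergence} produces \eqref{e:isotropic} together with $\overline B = rT^{-1}\overline b$. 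The bound $r\leq c_1\sqrt{t_0/2}$ keeps $t_0+r^2 t\geq t_0/2 > 0$ for $t\in(-1,0]$, while $r\leq c_1\min(|v_0|,|v_0|^{-1-\gamma/2})$ forces $rTv$ into a bounded set so that, applying part (a) to $rv$, $|v_0+rTv|\approx|v_0|$ uniformly on $Q_1$.

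Uniform ellipticity of $\overline A$ then follows from the anisotropic bounds on $\overline a$ recorded in Appendix \ref{s:A}: $\overline a(t,x,v)$ has eigenvalue of order $|v|^\gamma$ along $v$ and of order $|v|^{\gamma+2}$ on $v^\perp$, with constants depending on $m_0, M_0, E_0, H_0$ (and, in the very soft case, on $P_0$ and $\|f\|_{L^\infty}$). The operator $T^{-1}$ has eigenvalues $|v_0|^{-\gamma/2}$ along $v_0$ and $|v_0|^{-1-\gamma/2}$ on $v_0^\perp$, precisely designed to cancel these weights, so $T^{-1}\overline a T^{-1}$ has eigenvalues of order one. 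A minor subtlety is that the eigenframe of $\overline a$ is aligned with $v_0+rTv$ rather than $v_0$, but the choice of $r_1$ forces the angle between these two vectors to be $O(r|v_0|^{\gamma/2})\ll 1$, so a change of orthonormal basis only costs a universal constant. The pointwise bounds on $\overline B$ and $\overline C$ reduce to plugging the Appendix \ref{s:A} estimates on $\overline b$ and $\overline c$ into $\overline B = rT^{-1}\overline b$ and $\overline C = r^2\overline c$, using $|v|\approx|v_0|$ and tracking the (direction-dependent) powers of $|v_0|$ produced by $T^{-1}$. The main obstacle, and the only new content relative to the moderately soft case treated in \cite{cameron2017landau}, is the regime $\gamma\in[-d,-2]$: there the bounds on $\overline b$ and $\overline c$ involve negative powers of $\|f(t,x,\cdot)\|_{L^\infty(B_\theta(v))}$ and the extra moment \eqref{e:4thmoment}, and these factors must be carried through the conjugation by $T^{-1}$ and multiplication by $r$ or $r^2$ to match the case-by-case dependence on $|v_0|$ stated in the lemma.
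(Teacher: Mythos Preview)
Your proposal is correct and follows essentially the same approach as the paper's own proof, which cites \cite[Lemma 4.1]{cameron2017landau} for the case $\gamma\in(-2,0)$ and observes that the argument extends to $\gamma\in[-d,-2]$ once the anisotropic bounds on $\overline a,\overline b,\overline c$ from Proposition~\ref{p:a}, Proposition~\ref{p:bc}, and Lemma~\ref{l:very-soft} are in hand. You have spelled out more of the chain-rule computation and the angle-perturbation argument for the eigenframe than the paper does, but the underlying strategy is identical.
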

\begin{proof}
For $\gamma\in (-2,0)$, this lemma is proven in \cite[Lemma 4.1]{cameron2017landau}. In fact, that proof does not use $\gamma>-2$ in an essential way. The necessary ingredients are the upper and lower bounds of Proposition \ref{p:a} and Lemma \ref{l:very-soft} from the Appendix, which hold under our assumptions on $f$. The bounds on $\overline B$ and $\overline C$ come from Proposition \ref{p:bc} and Lemma \ref{l:very-soft}.
\end{proof}

The coefficients $\overline A$, $\overline B$, and $\overline C$ are dependent on $z_0$, which we refer to as the ``base point,'' and~$r$.  %In what follows, when discussing these coefficients we always explicitly state the choice of base point and $r$.

For any $z_0 = (t_0,x_0,v_0)$ with $|v_0|\leq 2$, we define $f_{z_0}(z) = f(\mathcal S_{z_0}\delta_{r_1} z)$, with $r_1$ as in Lemma \ref{l:T}(b). Note that in the notation of \cite{cameron2017landau}, our $f_{z_0}(t,x,v)$ is equal to $f_T(r_1^2t,r_1^3x,r_1v)$. The following proposition shows how the regularity of $f$ depends on the regularity of $f_{z_0}$.
\begin{proposition}\label{p:f0-f}
Let $f:[0,T_0]\times \R^d\times\R^d\to\R_+$ for some $T_0 > 0$. If $f_{z_0}$ is defined with base point $z_0\in (0,T_0]\times \R^d\times\R^d$, and some partial derivative $\partial_t^j \partial_x^\beta\partial_v^\eta f_{z_0}$ of order $M = 2j+3|\beta| +|\eta|$ exists in $C^\alpha(Q_1)$ for some $\alpha \in (0,1)$, then
\begin{align*}
|\partial_t^j \partial_x^\beta\partial_v^\eta f|_{\alpha,Q_{r_1}(z_0)} &\lesssim r_1^{-M-\alpha}(1+|v_0|)^{-\gamma \alpha/2} |\partial_t^j \partial_x^\beta\partial_v^\eta f_{z_0}|_{\alpha,Q_{1}}\\
&\lesssim \left(1+t_0^{-(M+\alpha)/2}\right) (1+|v_0|)^{M(1+\gamma/2)+\alpha}|\partial_t^j \partial_x^\beta\partial_v^\eta f_{z_0}|_{\alpha,Q_1}, 
\end{align*}
with $r_1$ as in Lemma \textup{\ref{l:T}}. %Conversely, if 
\end{proposition}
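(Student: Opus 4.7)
The plan is to unwind the definition $f_{z_0} = f \circ \mathcal T_{z_0} \circ \delta_{r_1}$ via the change of variables $\Psi := \delta_{r_1}^{-1} \circ \mathcal T_{z_0}^{-1}$, apply the chain rule, and then estimate the resulting scaling and matrix factors. Explicitly, for $\tilde z = (\tilde t, \tilde x, \tilde v)$ in the image of $Q_1$ under $\mathcal T_{z_0} \circ \delta_{r_1}$,
\[
\Psi(\tilde z) = \left( \frac{\tilde t - t_0}{r_1^2},\, \frac{T^{-1}(\tilde x - x_0 - (\tilde t - t_0)v_0)}{r_1^3},\, \frac{T^{-1}(\tilde v - v_0)}{r_1}\right),
\]
and $f(\tilde z) = f_{z_0}(\Psi(\tilde z))$. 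The chain rule then expresses $\partial_t^j \partial_x^\beta \partial_v^\eta f(\tilde z)$ as a finite linear combination of terms of the form $r_1^{-M}$ times a product of entries of $T^{-1}$ and $v_0$, times a partial derivative of $f_{z_0}$ of the same total kinetic order $M = 2j + 3|\beta| + |\eta|$, evaluated at $\Psi(\tilde z)$.

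To control the matrix contributions I use that $T$ has eigenvalues $|v_0|^{1+\gamma/2}$ on $v_0^{\perp}$ and $|v_0|^{\gamma/2}$ along $v_0$, so $\|T^{-1}\| \lesssim (1+|v_0|)^{-\gamma/2}$ and $|T^{-1} v_0| \lesssim (1+|v_0|)^{1+\gamma/2}$. Combined with the $r_1^{-M}$ prefactor, this yields the sup-norm portion of the estimate directly from the corresponding sup norm of $f_{z_0}$ and its derivatives of total kinetic order $M$ on $Q_1$.

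For the H\"older seminorm I need to understand how the quasimetric $\rho$ transforms under $\Psi$. Writing $\tilde z' = \mathcal S_{\tilde z}(\xi)$ and using that $\mathcal S_{z_0}$, $\tilde T$, and $\delta_{r_1}$ interact cleanly (the spatial cross-terms cancel against the Galilean shift), a short computation shows
\[ \Psi(\tilde z') = \mathcal S_{\Psi(\tilde z)} \left( \frac{\xi_t}{r_1^2},\, \frac{T^{-1}\xi_x}{r_1^3},\, \frac{T^{-1}\xi_v}{r_1}\right), \]
so that $\rho(\Psi(\tilde z), \Psi(\tilde z')) \lesssim r_1^{-1}(1+|v_0|)^{-\gamma/2}\,\rho(\tilde z, \tilde z')$. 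Raising to the $\alpha$ power contributes an extra factor $r_1^{-\alpha}(1+|v_0|)^{-\gamma\alpha/2}$, which combined with the sup-norm bound yields the first claimed inequality. The second inequality then follows by inserting the lower bound $r_1 \gtrsim (1+|v_0|)^{-1-\gamma/2}\min(1, \sqrt{t_0/2})$ from Lemma~\ref{l:T} and simplifying the resulting product of powers.

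The main bookkeeping to verify is that the many $T^{-1}$ and $v_0$ factors produced by the chain rule combine with the $r_1^{-M}$ scaling and the distortion factor $(\|T^{-1}\|/r_1)^\alpha$ to collapse to the exponents appearing in the statement, and that the region $Q_{r_1}(z_0)$ is appropriately covered by the image of $Q_1$ under $\mathcal T_{z_0}\circ \delta_{r_1}$, up to shrinking the universal constant $c_1$ in the definition of $r_1$ if necessary.
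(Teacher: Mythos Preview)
Your approach is the same as the paper's: write $f=f_{z_0}\circ\Psi$, apply the chain rule, and track how $\rho$ distorts under $\Psi$. The paper's own argument is in fact terser and less careful than yours---it writes the first line as an equality $|\partial f(z)-\partial f(z')|=r_1^{-M}|\partial f_{z_0}(\Psi z)-\partial f_{z_0}(\Psi z')|$, silently omitting the $T^{-1}$ matrix factors that you correctly flag.

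Two points in your write-up need correction. First, $|T^{-1}v_0|=|v_0|^{1-\gamma/2}$, not $(1+|v_0|)^{1+\gamma/2}$: the vector $v_0$ lies in the eigenspace of $T$ with eigenvalue $|v_0|^{\gamma/2}$, so $T^{-1}v_0=|v_0|^{-\gamma/2}v_0$. Second, the chain rule does \emph{not} preserve the kinetic order $2j+3|\beta|+|\eta|$: since $\partial_{\tilde t}\Psi_x=-r_1^{-3}T^{-1}v_0$, a single $t$-derivative of $f$ (order~$2$) produces a term $-r_1^{-3}(T^{-1}v_0)\cdot\nabla_x f_{z_0}$ involving an $x$-derivative of $f_{z_0}$ (order~$3$). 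So your assertion that each term is ``a partial derivative of $f_{z_0}$ of the same total kinetic order $M$'' is false, and the hypothesis on a single multi-index $(j,\beta,\eta)$ of $f_{z_0}$ is not literally sufficient; one needs control of all derivatives up to the relevant order, which is how the proposition is actually used downstream.

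Once you fix these, the bookkeeping will \emph{not} collapse to the precise exponents displayed in the proposition: you will pick up additional powers of $(1+|v_0|)^{-\gamma/2}$ from the $T^{-1}$ factors and from the $t$-to-$x$ cross term. The paper's proof has the same defect (it simply drops these factors). This is harmless for the application, since only polynomial growth in $|v_0|$ and a power of $t_0^{-1}$ are needed---everything is later absorbed into the Gaussian---but you should not expect your careful accounting to reproduce the stated constants exactly.
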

\begin{proof}
%Since $r\lesssim 1+|v_0|$, we have $|f_{z_0}|_{0,Q_r(z_0)}\approx |f|_{0,Q_r(z_0)}$ for any $z_0$.
%
Let $\partial = \partial_t^j\partial_x^\beta\partial_v^\eta$. For $z,z'\in Q_{r_1}(z_0)$ with $|v_0|\geq 2$, we have
\begin{align*}
|\partial f(z) - \partial f(z')| &= r_1^{-M}|\partial f_{z_0}(\delta_{r_1}^{-1}\mathcal S_{z_0}^{-1} \tilde T^{-1} z) - \partial f_{z_0}(\delta_{r_1}^{-1}\mathcal S_{z_0}^{-1} \tilde T^{-1} z')|\\
&\leq [\partial f_{z_0}]_{\alpha,Q_1} r_1^{-M}\rho(\delta_{r_1}^{-1}\mathcal S_{z_0}^{-1} \tilde T^{-1}z,\delta_{r_1}^{-1}\mathcal S_{z_0}^{-1} \tilde T^{-1}z')^{\alpha}\\
&= [\partial f_{z_0}]_{\alpha,Q_1} r_1^{-M-\alpha}\rho(\mathcal S_{z_0}^{-1} \tilde T^{-1}z,\mathcal S_{z_0}^{-1} \tilde T^{-1}z')^{\alpha}\\
&\leq [\partial f_{z_0}]_{\alpha,Q_1} r_1^{-M-\alpha}\rho(\tilde T^{-1}z,\tilde T^{-1}z')^\alpha\\
&\leq [\partial f_{z_0}]_{\alpha,Q_1} r_1^{-M-\alpha} |v_0|^{-\gamma \alpha/2} \rho(z,z')^\alpha.
\end{align*}
%Since $r_1 \approx \min\left(\sqrt{t_0/2}, |v_0|^{-1-\gamma/2}\right) \geq (t_0/2)^{1/4}|v_0|^{-1/2-\gamma/4}$, we have 
%\[r_1^{-M-\alpha} |v_0|^{-\gamma \alpha/2} \lesssim \left(1+t_0^{-(M+\alpha)/4}\right) |v_0|^{(M+\alpha)(2+\gamma)/4 - \gamma \alpha/2}.\]
In the case $|v_0|\leq 2$, we have $f(z) = f_{z_0}(\delta_{r_1}^{-1} \mathcal S_{z_0}^{-1} z)$, and a similar calculation applies.
\end{proof}

%Our strategy will be to apply regularity estimates to $f_{z_0}$, differentiate equation \eqref{e:isotropic-nondivergence}, and iterate. For this, we need to 
Next, we show that if the regularity estimates of $f_{z_0}$ decay sufficiently quickly as $|v|\to\infty$, they imply regularity of the coefficients of \eqref{e:isotropic-nondivergence}. Although it is enough to show that partial derivatives of $\overline A$ and $\overline C$ grow at most polynomially, we derive explicit rates for the sake of concreteness.

\begin{lemma}\label{l:coefficients}
Let $f_{z_0}$ be as in Lemma \textup{\ref{l:T}}. Assume that some partial derivative $\partial_t^j\partial_x^\beta\partial_v^\eta f_{z_0}$ of order $M = j + |\beta| + |\eta|$ exists in $C^{\alpha}(Q_1)$ for every $z_0\in (0,T_0]\times \R^d\times\R^d$, and satisfies
\[[\partial_t^j \partial_x^\beta \partial_v^\eta f_{z_0}]_{\alpha,Q_1} \leq C_0 \left(1+t_0^{-p}\right) (1+|v_0|)^{-q}\]
for some $p\geq 0$ and $q > d+ 2 + \gamma(1-\alpha/2)+\alpha/3$. Then $\overline A(t,x,v)$ and $\overline C(t,x,v)$ enjoy the same regularity as $f_{z_0}$, and for any $z_0\in (0,T_0]\times \R^d\times\R^d$, one has %\CH{Something wrong here.  Maybe want the homogenous semi-norm instead???}
\begin{align*}
\left[\partial_t^j\partial_x^\beta\partial_v^\eta \overline A\right]_{2\alpha/3,Q_1}  &\lesssim \left(1+t_0^{-M/2-p}\right)(1+|v_0|)^{(M+\alpha/3)(1+\gamma/2)_+ +2+\alpha +\alpha\gamma/3}\\
\left[\partial_t^j\partial_x^\beta\partial_v^\eta\overline C\right]_{2\alpha/3,Q_1} &\lesssim \left(1+t_0^{-M/2-p+1}\right)(1+|v_0|)^{(M+\alpha/3-2)(1+\gamma/2)_+ +\alpha+\alpha\gamma/3},
\end{align*}
where $\overline A$ and $\overline C$ are defined with base point $z_0$, and $r_1$ is as in Lemma \textup{\ref{l:T}}.  The implied constant depends on $d$, $\gamma$, $q$, and $C_0$.
%\CH{If, further, $f_{z_0}$ satisfies
%\[
%	|\partial_t^j \partial_x^\beta \partial_v^\eta f_{z_0}|_{\alpha, Q_1}
%		\leq C_0\left(1+ t_0^{-p}\right)e^{-\mu |v|^2},
%\]
%for some $\mu > 0$, then, for all $\mu'< \mu$, we have
%\begin{align*}
%\left|\partial_t^j\partial_x^\beta\partial_v^\eta \overline A\right|_{\alpha,Q_1}  &\lesssim \left(1+t_0^{-M/2-p}\right)e^{-\mu'|v|^2}\\
%\left|\partial_t^j\partial_x^\beta\partial_v^\eta\overline C\right|_{\alpha,Q_1} &\lesssim \left(1+t_0^{-M/2-p+1}\right)e^{-\mu'|v|^2},
%\end{align*}}
%The implied constant depends on $d$, $\gamma$, $q$, $\mu-\mu'$, and $C_0$.
\end{lemma}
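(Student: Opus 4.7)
The plan is to differentiate $\overline A(z) = T^{-1}\overline a(\mathcal T_{z_0}(\delta_r z))T^{-1}$ and $\overline C(z) = r^2 \overline c(\mathcal T_{z_0}(\delta_r z))$ via the chain rule, push derivatives inside the defining convolutions onto $f$, and then bound the resulting integrands by applying Proposition~\ref{p:f0-f} pointwise in the convolution variable with a ``floating'' base point that follows the integration variable.

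For the chain rule, since $\tilde t=t_0+r^2 t$, $\tilde x=x_0+r^3 Tx+r^2 tv_0$, $\tilde v=v_0+rTv$, each $\partial_t$ becomes $r^2\partial_{\tilde t}+r^2 v_0\cdot\nabla_{\tilde x}$, each $\partial_{x_i}$ becomes $r^3(Te_i)\cdot\nabla_{\tilde x}$, and each $\partial_{v_i}$ becomes $r(Te_i)\cdot\nabla_{\tilde v}$. Combining $\|T^{\pm 1}\|\lesssim|v_0|^{\pm(1+\gamma/2)}\vee|v_0|^{\pm\gamma/2}$ with $r\le r_1\lesssim|v_0|^{-1-\gamma/2}\min(1,\sqrt{t_0/2})$ (for $|v_0|\ge 2$), the scaling prefactor assembles into the target $(1+|v_0|)^{(M+\alpha)(1+\gamma/2)+2}(1+t_0^{-M/2-p})$ for $\overline A$ -- the $+2$ coming from the two copies of $T^{-1}$ -- and the analogous expression for $\overline C$ with $+2$ replaced by $-2(1+\gamma/2)$ and an extra factor of $t_0$, owing to the extra $r^2$. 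The $\tilde t,\tilde x$ derivatives of $\overline a(\tilde z)$ commute with the defining integral, and the $\tilde v$ derivatives move onto $f$ via $w\mapsto\tilde v-w$, so each term of $\partial^M\overline A(z)$ is the prefactor times $\int K_a(w)\,\partial^M f(\tilde t,\tilde x,\tilde v-w)\,dw$ with $K_a(w):=(I-\hat w\otimes\hat w)|w|^{\gamma+2}$; the $\overline C$ case is identical with $K_c(w):=|w|^\gamma$.

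For the Hölder seminorm, I fix $z,z'\in Q_1$ and write $\partial^M\overline A(z)-\partial^M\overline A(z')$ as a sum of integrals of $K_a(w)$ against $\partial^M f(\tilde t,\tilde x,\tilde v-w)-\partial^M f(\tilde t',\tilde x',\tilde v'-w)$. For each fixed $w$, I apply Proposition~\ref{p:f0-f} with the floating base point $z^*(w):=(\tilde t,\tilde x,\tilde v-w)$ together with the hypothesis $[\partial^M f_{z^*(w)}]_{\alpha,Q_1}\le C_0(1+\tilde t^{-p})(1+|\tilde v-w|)^{-q}$, bounding the pointwise difference by $\rho(z,z')^\alpha$ times an explicit decay factor in $|\tilde v-w|$. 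Both shifted points lie in $Q_{r_1^*(w)}(z^*(w))$ when $\rho(z,z')$ is small (larger separations fall under the trivial interpolation between Hölder and $L^\infty$ bounds). Integrating the result against $K_a$ yields the estimate on $[\partial^M\overline A]_{\alpha,Q_1}$ up to the prefactor from the first step.

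The near-zero singularity of $K_a$ (resp.\ $K_c$) is integrable because $\gamma>-d$, and the tail at $|w|\gg 1$ is controlled by splitting the integral at $|w|\sim|v_0|$ and using the assumed decay of $[\partial^M f_{z^*(w)}]_\alpha$ in $|\tilde v-w|$; the condition $q>d+2+\gamma(1-\alpha/2)$ is precisely what is needed, after the growth factor $(1+|\tilde v-w|)^{(M+\alpha)(1+\gamma/2)}$ from the Proposition~\ref{p:f0-f} rescaling is absorbed into the $(1+|v_0|)$-prefactor. The case $|v_0|\le 2$ is treated directly with $T=I$, giving stronger bounds subsumed in the stated conclusion. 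The main obstacle is the delicate bookkeeping: tracking how the many $T,r,v_0$ powers from the chain rule, combined with the $|\tilde v-w|$-growth from Proposition~\ref{p:f0-f}, assemble exactly into the claimed exponents, and ensuring that the split-region integration reduces the integrability requirement to the stated hypothesis on $q$.
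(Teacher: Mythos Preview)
Your plan is essentially the paper's own argument: apply Proposition~\ref{p:f0-f} with a floating base point $(t_0,x_0,v_0-w)$ to control $|\partial f(\tilde z-(0,0,w))-\partial f(\tilde z'-(0,0,w))|$, then integrate against the kernel, splitting at $|w|\sim|v_0|/2$. The paper is terser---it suppresses the chain-rule bookkeeping you write out and jumps directly to the bound $|\partial\overline A(z)-\partial\overline A(z')|\le |v_0|^{-\gamma}\int|w|^{\gamma+2}|\partial f(\tilde z_w)-\partial f(\tilde z'_w)|\,dw$---but the underlying mechanism is identical.

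One small correction to your accounting: the ``$+2$'' in the final exponent does not come solely from the two $T^{-1}$'s. Those contribute $\|T^{-1}\|^2\lesssim|v_0|^{-\gamma}$, not $|v_0|^2$. What actually happens in the paper's computation is that the inner-ball integral $\int_{B_{|v_0|/2}}|w|^{\gamma+2}\,dw\approx|v_0|^{d+\gamma+2}$, combined with the decay $(1+|v_0-w|)^{-q-\gamma\alpha/2}\approx|v_0|^{-q-\gamma\alpha/2}$ on that ball and the hypothesis on $q$, leaves a net $|v_0|^{\gamma+2}$; multiplied by the $|v_0|^{-\gamma}$ from the $T^{-1}$'s this yields $|v_0|^{2}$. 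So the $+2$ is the residual after the $q$-condition exactly absorbs the $d$ from the volume and the $\gamma+2$ from the kernel meets the $-\gamma$ from the conjugation. Your concern about bookkeeping is well placed, but aimed at the right target.
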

\begin{proof}
Let $\partial= \partial_t^j \partial_x^\beta \partial_v^\eta$. For some base point $z_0$ with $|v_0|\geq 2$, fix $z,z'\in Q_1$ and let $\tilde z = (\tilde t,\tilde x,\tilde v) = \mathcal T_{z_0}(\delta_{r_1}z)$ and $\tilde z' = \mathcal T_{z_0}(\delta_{r_1}z')$, with $r_1$ as in Lemma \ref{l:T}. For $w\in \R^d$, Proposition \ref{p:f0-f} implies
\begin{align*}
	|\partial f(\tilde t,\tilde x,\tilde v-w) - &\partial f(\tilde t',\tilde x',\tilde v'-w)|
		\leq [\partial f]_{\alpha,Q_{r_1}(t_0,x_0,v_0-w)}\rho((\tilde t, \tilde x, \tilde v-w),(\tilde t', \tilde x', \tilde v'-w))^\alpha\\
		&\lesssim (1+t_0^{-p})r_1^{-M-\alpha}(1+|v_0-w|)^{-q-\gamma\alpha/2}(\rho(\tilde z,\tilde z')^\alpha+ |w|^{\alpha/3}\rho(\tilde z, \tilde z')^{2\alpha/3}).
\end{align*}
Recall $\overline A(z) = T^{-1}\overline a(\mathcal T_{z_0}(\delta_{r_1}z)) T^{-1}$. The formula \eqref{e:a} for $\overline a$ implies
\begin{align*}
|\partial \overline A(z) - \partial \overline A(z')| &\leq |v_0|^{-\gamma} \int_{\R^d} |w|^{\gamma+2}|\partial f(\tilde t,\tilde x,\tilde v-w) - \partial f(\tilde t',\tilde x',\tilde v'-w)|\dd w \\
 &\lesssim (1+t_0^{-p})|v_0|^{-\gamma} r_1^{-M-\alpha} \rho(\tilde z,\tilde z')^{2\alpha/3} \int_{\R^d} |w|^{\gamma+2+\alpha/3} (1+|v_0-w|)^{-q-\gamma\alpha/2}\dd w\\
 % \left(|v_0|^{-q-\gamma\alpha/2}\int_{B_R} |w|^{\gamma+2} \dd w\right.\\
% &\qquad\qquad\qquad\qquad\qquad\qquad\left. +  \int_{\R^d\setminus B_R(v_0)} |v_0-\overline w|^{\gamma+2} |\overline w|^{-q-\gamma\alpha/2}\dd \overline w\right)\\
&\lesssim (1+t_0^{-p})r_1^{-M-\alpha} \rho(\tilde z,\tilde z')^{2\alpha/3} |v_0|^{2+\alpha/3}\\
&\lesssim (1+t_0^{-p})r_1^{-M-\alpha/3} \rho(z,z')^{2\alpha/3} |v_0|^{2+\alpha+\alpha\gamma/3},
%&\lesssim (1+t_0^{-p})r_1^{-M}|v_0|^{\gamma+ 3+\alpha\gamma/3}\rho(z,z')^{2\alpha/3}.
\end{align*}
%$|v_0| \leq 2$ case?
where we have used $\rho(\tilde z,\tilde z') \lesssim |v_0|^{1+\gamma/2} r_1 \rho(z,z')$. A similar calculation applies to $\overline C(z) = r_1^2\overline c(\mathcal T_{z_0}\delta_{r_1}z)$. In the borderline case $\gamma = -d$, we have $\overline C(z) = c_{d,\gamma} r_1^2 f_{z_0}(z)$, and the conclusion of the lemma follows from the even stronger decay of $\partial f_{z_0}$.% and the exponentially decaying case.%The same calculation applies to $\partial \overline c(t,x,v)$, replacing $\gamma+2$ with $\gamma$. %The bounds on $|\partial \overline a|_{0,Q_{r_0}(z_0)}$ and $|\partial \overline c|_{0,Q_{r_0}(z_0)}$ follow by differentiating in Lemma ?.
\end{proof}
\begin{remark}
The decay in the estimates of Lemma \textup{\ref{l:coefficients}} can be improved when $|\eta|> 0$ by integrating by parts in $w$. However, this would still not grant us enough decay to conclude $f\in C^\infty$ without any decay assumption on the initial data.
\end{remark}

Next, we show that Gaussian bounds in the initial data are propagated. This result was established in the case $\gamma\in (-2,0)$ in \cite[Theorem 1.2]{cameron2017landau}%\CH{-- Do you actually mean $(-2,0)$?  or maybe $(-2,0]$?  Looking back at that paper it is not clear from the statement what the assumption is on $\gamma$--}
, under the assumption that the hydrodynamic bounds \eqref{e:M0}, \eqref{e:E0}, and \eqref{e:H0} hold. To prove such a result when $\gamma \in [-d,-2]$, we also need \emph{a priori} bounds on $\|f\|_{L^\infty}$ and on sufficiently high moments of $f$.

\begin{theorem}\label{t:gaussian}
Let $\gamma\in [-d,-2]$, and let $f$ be a bounded weak solution of the Landau equation \eqref{e:nondivergence} satisfying the hydrodynamic bounds \eqref{e:M0}, \eqref{e:E0}, and \eqref{e:H0}. Assume, in addition, that  
\[\int_{\R^d}|v|^p f(v)\dd v \leq P_0,\]
where $p$ is the smallest integer such that $p>\dfrac{d|\gamma|}{2+\gamma+d}$. Then there exists $\mu_0>0$ such that if 
\[f_{\rm in}(x,v) \leq C_0 e^{-\mu|v|^2},\]
for all $x\in \R^d,v\in\R^d$ and $\mu>0$, then 
\begin{equation}\label{e:Gaussian_decay}
	f(t,x,v) \lesssim e^{-\min\{\mu_0,\mu\}|v|^2},
\end{equation}
where $\mu_0$ and the implied constant in~\eqref{e:Gaussian_decay} depend on $C_0$, $M_0$, $E_0$, and $\|f\|_{L^\infty([0,T_0]\times \R^d\times \R^d)}$. If $\gamma \leq -d/2-1$, then the implied constant in~\eqref{e:Gaussian_decay} also depends on the time of existence $T_0$.    %\CH{This proof gives us bounds independent of $\|f\|_\infty$ at the expense of getting dependence on $T_0$.  But it seems like it would be better to have dependence on $T_0$ instead of $\|f\|_\infty$ since dependence on $T_0$ cannot cause blow-up, whereas dependence on $\|f\|_\infty$ does...} {\color{blue} There is no way around the dependence on $\|f\|_{L^\infty}$, since the bounds on the coefficients depend on it when $\gamma \leq -2$. The only question is whether the bounds depend on $T_0$ or not, and that is determined by $\gamma$ and how many moments of $f$ we assume to be finite.}
\end{theorem}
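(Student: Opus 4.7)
The approach mirrors the barrier argument developed in \cite{cameron2017landau} for $\gamma\in(-2,0)$. The plan is to construct a pointwise supersolution of \eqref{e:nondivergence} independent of $x$, of the form $\bar f(t,v) = C_0 e^{Kt}\exp(-\bar\mu |v|^2)$ with $\bar\mu := \min(\mu, \mu_0)$ and suitable universal constants $\mu_0 > 0$ and $K > 0$, and then invoke the comparison principle for weak subsolutions (cf.\ \cite[Proposition~A.1]{cameron2017landau}) to conclude $f \leq \bar f$ on $[0,T_0] \times \R^d \times \R^d$.

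The first step is to extract uniform pointwise bounds on the coefficients from the Appendix estimates (Propositions~\ref{p:a}, \ref{p:bc}, and Lemma~\ref{l:very-soft}). Specifically, one needs
\[
\overline a(t,x,v) v \cdot v \lesssim \langle v \rangle^{\gamma+2}, \qquad |\overline c(t,x,v)| \lesssim 1, \qquad \tr \overline a(t,x,v) \gtrsim 1,
\]
the first two of which are uniform in $(t,x,v)$ because $\gamma + 2 \leq 0$, and the third is an ellipticity lower bound. For $\gamma \in [-d,-2]$, the kernels $|w|^{\gamma+2}$ and $|w|^\gamma$ defining these coefficients are singular at the origin; one controls the near-origin integrals using $\|f\|_{L^\infty}$ and the far-field integrals using the $p$-th moment of $f$. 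The exponent threshold $p > d|\gamma|/(2+\gamma+d)$ is exactly what is needed to balance these two pieces in the interpolation.

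A direct computation then gives
\[
\partial_t \bar f - \tr(\overline a D_v^2 \bar f) - \overline c \bar f = \bigl(K + 2\bar\mu \tr \overline a - 4\bar\mu^2 \overline a v \cdot v - \overline c\bigr) \bar f,
\]
and by Step~1 the bracket is bounded below by $K - C(\mu_0)$ for some $C(\mu_0)$ depending on the coefficient bounds and on $\bar\mu \leq \mu_0$. Choosing $K > C(\mu_0)$ makes $\bar f$ a supersolution of \eqref{e:nondivergence}. The initial comparison $\bar f(0,v) \geq f_{\rm in}(x,v)$ follows from the Gaussian hypothesis on $f_{\rm in}$ after replacing $\mu$ by $\min(\mu,\mu_0)$ if $\mu > \mu_0$, which only weakens the initial bound. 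Combined with the polynomial decay of $f$ from \cite[Theorem~1.3]{cameron2017landau}, the comparison principle gives $f \leq \bar f$, and hence \eqref{e:Gaussian_decay} with implied constant $C_0 e^{K T_0}$.

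The main obstacle is the interpolation of Step~1 when $\gamma$ is very negative: the $L^\infty$ bound on $f$ only controls the singular integral inside a radius depending on $\|f\|_{L^\infty}$, and the moment of order $p$ controls the complementary tail, so a uniform-in-$v$ bound on $\overline a v\cdot v$ and $\overline c$ forces the threshold $p > d|\gamma|/(2+\gamma+d)$. When $\gamma \leq -d/2-1$ this becomes even more delicate because the growth of $\overline a v \cdot v$ degenerates and the constant $K$ in the barrier must grow with $T_0$, explaining the extra $T_0$-dependence recorded in the theorem.
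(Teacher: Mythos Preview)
Your barrier argument is essentially correct and coincides with the paper's treatment of the range $\gamma \leq -d/2-1$: there the authors set $h=f e^{\mu|v|^2}$, observe that the zeroth-order coefficient in the equation for $h$ is bounded (using exactly the bounds $\overline a_{ij}v_iv_j\lesssim 1$ and $|\overline c|\lesssim 1$ that you invoke), and apply the maximum principle to $e^{-C_0t}h$. This is dual to your supersolution $C_0 e^{Kt}e^{-\bar\mu|v|^2}$, so for that sub-range your sketch and the paper agree.

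However, your argument does \emph{not} prove the theorem as stated. The factor $e^{Kt}$ forces the implied constant to depend on $T_0$ for \emph{every} $\gamma\in[-d,-2]$, whereas the theorem asserts $T_0$-independence when $\gamma\in(-d/2-1,-2]$. For that sub-range the paper uses a different, time-\emph{independent} barrier $\phi(v)=e^{-\bar\mu|v|^2}$. The point is that for $\gamma>-d/2-1$, Lemma~\ref{l:very-soft} gives the sharper decay $\overline c\lesssim(1+|v|)^{\gamma+2-\eps}$; combined with the \emph{lower} bound $\overline a_{ij}e_ie_j\gtrsim(1+|v|)^{\gamma+2}$ in directions $e\perp v$, one computes $\overline a_{ij}\partial_{v_iv_j}\phi+\overline c\,\phi\leq -C(1+|v|)^{\gamma+2}\phi<0$ for $|v|\geq R_0$, provided $\mu_0$ is small. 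Thus $\phi$ is already a supersolution outside a compact set in $v$, and one chooses the multiplicative constant large enough to dominate $\|f\|_{L^\infty}$ on $\{|v|\leq R_0\}$. No exponential-in-time factor is needed. Your proposal discards the favorable term $2\bar\mu\,\tr\overline a$ and therefore misses this mechanism.

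Two smaller points: your claimed bound $\tr\overline a\gtrsim 1$ is false when $\gamma<-2$ (the lower ellipticity degenerates like $(1+|v|)^{\gamma+2}$), though your supersolution computation does not actually use it. And your final paragraph misidentifies the source of the $T_0$-dependence: it is not that ``$K$ must grow with $T_0$'' but simply that $e^{KT_0}$ appears in the bound; the threshold $\gamma=-d/2-1$ is where the decay of $\overline c$ in Lemma~\ref{l:very-soft} ceases to beat $(1+|v|)^{\gamma+2}$, so the time-independent barrier argument breaks down.
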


\begin{proof}
First, assume that $\gamma \in (-d/2-1,-2]$. Fix $\mu_0>0$ to be determined and let $\overline \mu = \min\{\mu,\mu_0\}$.  Proceeding as in the proof of \cite[Theorem 1.2]{cameron2017landau}, we claim that $\phi(t,x,v) = e^{-\overline\mu|v|^2}$ is a supersolution to the linear Landau equation 
\begin{equation}\label{e:linear-landau}
\partial_t \phi + v\cdot \nabla_x \phi = \tr(\overline a D_v^2 \phi) + \overline c \phi,
\end{equation}
for $|v|$ large, where $\overline a$ and $\overline c$ are defined in terms of $f$. Since $\phi$ is radial in $v$, we have
\begin{equation*}
\partial_{v_i}\partial_{v_j}\phi = \frac {\partial_{rr}\phi}{|v|^2}   v_i v_j + \frac{\partial_r\phi}{|v|} \left( \delta_{ij} - \frac{v_iv_j}{|v|^2}\right) = \left[\frac {4\overline\mu^2|v|^2 - 2\overline\mu}{|v|^2}   v_i v_j -  2\overline\mu  \left( \delta_{ij} - \frac{v_iv_j}{|v|^2}\right)\right] e^{-\overline\mu |v|^2}.
\end{equation*}
Proposition \ref{p:a} and Lemma \ref{l:very-soft} from the appendix imply
\begin{align*}
\overline a_{ij}\partial_{v_i}\partial_{v_j}\phi &\leq \left[(4\overline\mu^2(1+|v|)^2 - 2\overline\mu)C_1 (1+|v|)^{\gamma}  -  2\overline\mu C_2 (1+|v|)^{\gamma+2}\right] e^{-\overline\mu |v|^2}\\
&= \left((4\overline\mu^2 C_1 - 2\overline\mu C_2) (1+|v|)^{\gamma+2} - 2\overline\mu C_1 (1+|v|)^{\gamma}\right) e^{-\overline\mu|v|^2}\\
&\leq -C (1+|v|)^{\gamma+2} \phi(v),
\end{align*}
for $|v|$ sufficiently large, provided that we choose $\mu_0 < C_2 / (2C_1)$, where we use the convention that repeated indices are summed over. With the bound on $\overline c$ from Lemma \ref{l:very-soft}, this implies
\[\overline a_{ij} \partial_{v_i}\partial_{v_j}\phi  + \overline c \phi \leq \left[-C(1+|v|)^{\gamma+2} + C(1+|v|)^{\gamma+2-\eps}\right]\phi(v).\]
The first term on the right-hand side dominates for large $|v|$, and we have 
\begin{equation}\label{e:aij-phi}
\overline a_{ij} \partial_{v_i}\partial_{v_j}\phi  + \overline c \phi \leq -C |v|^{\gamma+2} \phi
\end{equation} 
for $|v|\geq R_0$ for some large $R_0$. Choose $C_f$ such that $C_f\phi(t,x,v) > \|f\|_{L^\infty}$ for all $|v|\leq R_0$ and such that $C_f\phi(0,x,v) > f(0,x,v)$ for all $(x,v) \in \R^d \times \R^d$.  In the second inequality we used that $\overline\mu \leq \mu$.  Define the function
\[g(t,x,v) := [f(t,x,v) - C_f \phi(t,x,v)]_+.\]
%In particular, there is some $C_0$ such that $\overline a_{ij} \partial_{v_i}\partial_{v_j}\phi  + \overline c \phi \leq C_0 \phi$ on the entire domain $(0,T_0]\times \R^d\times \R^d$, which implies $C_1e^{C_0 t}\phi(t,x,v)$ is a supersolution of \eqref{e:linear-landau} and $f(t,x,v) \leq C_1 e^{C_0 t} \phi(t,x,v)$ for all $t>0$, $x\in\R^d$, $v\in \R^d$. Next, choose $t_0>0$ such that $C_1 e^{C_0 t_0}\phi(t_0,x,v) > \|f\|_{L^\infty}$ for $|v|<R_0$. Define the function
%\[g(t,x,v) := [f(t_0+t,x,v) - C_1 e^{C_0t_0}\phi(t_0,x,v)]_+.\]
If $|v|\leq R_0$, then $g(t,x,v) = 0$ by our choice of $C_1$. If $|v|> R_0$, then by \eqref{e:aij-phi}, $\phi$ is a supersolution to \eqref{e:linear-landau}. We conclude $g(t,x,v)$ is a subsolution of $\partial_t g + v\cdot \nabla_x g \leq \overline a_{ij}\partial_{v_iv_j} g + \overline c g$ in its entire domain; hence, by the maximum principle~\cite[Lemma A.2]{cameron2017landau}, we have $g\leq 0$ for all $t>0$, so $f(t,x,v) \leq C_1 \phi(t,x,v)$ for all $t>0$ for which $f$ is defined. %\CH{Why do we have to shift forward in time by $t_0$?  This seems unnecessary...} {\color{blue} Hopefully with more detail it makes sense now? If we didn't shift in time, we wouldn't know that $f-Ce^{-\mu|v|^2}$ was a subsolution everywhere for some $C$. We could only say $f-e^{Ct} e^{-\mu|v|^2}$ was a subsolution everywhere.}

If $\gamma \leq -d/2-1$, the above argument does not apply because we do not have enough \emph{a priori} decay in $\overline c$ to conclude \eqref{e:aij-phi}. For this case, we define $h(t,x,v) = f(t,x,v)e^{\mu|v|^2}$. From the equation \eqref{e:nondivergence} for $f$, we have
\begin{align*}
\partial_t h + v\cdot \nabla_x h &= e^{\mu|v|^2}\left( \tr\left[\overline a D_v^2(e^{-\mu|v|^2} h)\right] + \overline c e^{-\mu|v|^2} h\right)\\
&= \tr\left[\overline a D_v^2 h\right] - 4 \mu v\cdot (\overline a \nabla_v h) + \left(\overline c - 2\mu \, \tr (\overline a)  + 4 \mu^2 \overline a_{ij} v_i v_j\right) h.
\end{align*}
Lemma \ref{l:very-soft} implies that $\|\overline c -2\mu \, \tr (\overline a)  + 4 \mu^2 \overline a_{ij} v_i v_j\|_{L^\infty([0,T_0]\times \R^{2d})} \leq C_0$ for some $C_0$, so that $\tilde h(t,x,v) = e^{-C_0t}h(t,x,v)$ is a supersolution of $\partial_t \tilde h + v\cdot \nabla_x \tilde h = \tr(\overline a D_v^2 \tilde h) + \tilde b\cdot \nabla_v \tilde h$ with bounded drift $\tilde b_j = -4\mu v_i \overline a_{ij}$. The maximum principle for this class of equations (see for example \cite[Proposition A.1]{cameron2017landau}) implies $h(t,x,v) \leq e^{C_0t}f_{\rm in}(x,v)e^{\mu|v|^2}$, which is uniformly bounded on any finite time interval. Note that, since $\|f\|_{L^\infty([0,T_0]\times \R^d\times\R^d)}$ is finite, this argument also applies in the case $\gamma = -d$. %{\color{blue} Should we state these maximum principles in an Appendix B, since we use them several times?} \CH{My impulse is that we don't need to since the statement is basically the same as in the smooth case (even if the proof is quite different) so I can't imagine anyone feeling frustrated with it.  But maybe we can write a remark somewhere in the intro about that}
\end{proof}

%\begin{remark}
%If the main theorem of \cite{cameron2017landau}, which states that any bounded weak solution decays like $|v|^{-1}$ in the case $\gamma\in (-2,0)$, were extended to $\gamma = -d$, we could remove the dependence on $T$ in Theorem \textup{\ref{t:gaussian}}.
%\end{remark}

We are now in a position to prove our main result.

\begin{proof}[Proof of Theorem \textup{\ref{t:main}}]
Let $f$ be a weak solution of the Landau equation \eqref{e:divergence} such that $f_{\rm in}(x,v) = f(0,x,v) \lesssim e^{-\mu|v|^2}$ for some $\mu>0$. Without loss of generality, we may assume $\mu \leq \mu_0$, with $\mu_0$ as in the statement of the theorem.  By applying \cite[Theorem~1.2]{cameron2017landau} if $\gamma \in (-2,0)$ or \Cref{t:gaussian} if $\gamma \in [-d,-2]$, we see that, for all $(t,x,v) \in [0,T_0]\times\R^d \times \R^d$,
\begin{equation}\label{e:decay}
f(t,x,v) \lesssim e^{-\mu|v|^2},
\end{equation}
where the implied constant is independent of $T_0$ if $\gamma>-d/2 -1$. The dependence of the implied constant in~\eqref{e:decay} on $T_0$ in the case $\gamma \leq -d/2-1$ propagate to the rest of our estimates. Throughout this proof, as we absorb algebraic-in-$v$ factors into factors with Gaussian decay in $v$, $\mu'$ denotes a changing, positive constant, with $\mu'<\mu \leq \mu_0$.  The constant $\mu'$ changes only finitely many times, by an arbitrarily small amount, so the final conclusion is valid for any $\mu'<\mu$.

Let $f_{z_0}$ be as in Lemma \ref{l:T} with base point $z_0\in [0,T_0]\times \R^d\times \R^d$.  Since \Cref{l:T} locally controls the coefficients in the equation for $f_{z_0}$~\eqref{e:isotropic}, we may apply \cite[Theorem~2]{golse2016} to obtain:
\begin{align*}
|f_{z_0}|_{\alpha,Q_{1/2}} \lesssim \|f_{z_0}\|_{L^2(Q_1)}+|\overline C f_{z_0}|_{0,Q_1},
\end{align*}
for some $\alpha\in (0,1)$. Using the Gaussian decay of $f$~\eqref{e:decay}, this implies $|f_{z_0}|_{\alpha,Q_{1/2}} \lesssim e^{-\mu'|v_0|^2}$. By rescaling, we have $|f_{z_0}|_{\alpha,Q_{1}} \lesssim e^{-\mu'|v_0|^2}$. Next, Lemma~\ref{l:coefficients} with $M=p=0$, along with the local upper bounds on $\overline A$ and $\overline C$ in Lemma \ref{l:T}, implies that the coefficients $\overline A$ and $\overline C$ in \eqref{e:isotropic-nondivergence} satisfy
% By Proposition \ref{p:f-Holder}, we have that $f$ is locally H\"older continuous in $(0,T]\times\R^d\times\R^d$, and that for any $z_0$ and $r_0 = \min\left(1,\sqrt{t_0/2}\right)$, 
%\[|f|_{\alpha,Q_{r_0}(z_0)} \leq Cr_0^{-d}e^{-\mu|v_0|^2}.\]
%Fix $z_0\in (0,T]\times \R^d\times\R^d$, and let $f_{z_0}$ be as in Lemma \ref{l:T}. By Proposition \ref{p:f-Holder}, the coefficients $\overline A(z)$ and $\overline C(z)$ in \eqref{e:isotropic-nondivergence} satisfy 
\[ \left|\overline A\right|_{2\alpha/3,Q_{1}} + \left|\overline C\right|_{2\alpha/3,Q_{1}} \lesssim (1+|v_0|)^{k_0},\]
for some $k_0\in \R$, with $\alpha$ as above. %\CH{This is wrong.  Maybe we just want the homogeneous semi-norm here?  $\overline A$ should be uniformly elliptic...  Also, we didn't prove that exponential decay of $\overline A$ and $\overline C$, just algebraic (which is enough but we shouldn't claim more than we show...)}%, in addition to the ellipticity bounds $\lambda I \leq \overline A(z) \leq \Lambda I$. 
We apply the Schauder estimate\footnote{Technically, \Cref{t:weak-schauder} does not apply to $f$ since it is not sufficiently regular; however, a standard mollification argument allows us to sidestep this potential issue.  We omit the details.}, Theorem \ref{t:weak-schauder}(a), to $f_{z_0}$ in $Q_1$ with $\alpha' = 2\alpha/3$ to obtain
\[ [f_{z_0}]_{1+\alpha'/3,Q_{1/2}} \leq C([\overline C f_{z_0}]_{\alpha',Q_1} + |\overline A|_{\alpha',Q_1}^{p}|f_{z_0}|_{0,Q_1}) \lesssim  e^{-\mu'|v_0|^2},\]
for any $z_0\in (0,T_0]\times \R^d\times \R^d$, where $p>0$ depends on $\alpha$. By Lemma \ref{l:coefficients} again, this implies $\overline A, \overline C\in C^{1+\alpha''}(Q_{1/2})$, with $\alpha'' = 2\alpha'/3$ and 
\begin{align*}
 \left|\overline A\right|_{1+\alpha'',Q_{1/2}} &\lesssim r_1^{-2} (1+|v_0|)^{k_1} \lesssim (1+t_0^{-1})(1+|v_0|)^{k_1},\\
  \left|\overline C\right|_{1+\alpha'',Q_{1/2}}  &\lesssim  (1+|v_0|)^{\ell_1}, 
\end{align*}
for $k_1, \ell_1 \in \R$. We can now apply Theorem \ref{t:weak-schauder}(b) to obtain
\begin{align*}
 [\partial_t f_{z_0}]_{\alpha'',Q_{1/4}}& + [\nabla_x f_{z_0}]_{\alpha'',Q_{1/4}} + [D_v^3 f_{z_0}]_{\alpha'',Q_{1/4}}\\
 &\lesssim (|\overline C f_{z_0}|_{1+\alpha'',Q_{1/2}} + |\overline A|_{1+\alpha'',Q_{1/2}}^q|f_{z_0}|_{0,Q_{1/2}})\\
 &\lesssim (1+t_0^{-q}) e^{-\mu'|v_0|^2}.
%  &\leq C(1+t_0^{-1}) (|\overline C(z)f_{z_0}|_{(1+\alpha)/3,x,Q_{1/2}} + |D_v^2(\overline C(z)f_{z_0})|_{\alpha,Q_{1/2}} + |f_{z_0}|_{0,Q_{1/2}}) \\
% &\leq C (1+t_0^{-1})e^{-\mu|v_0|^2},
 \end{align*}
where $q>0$ depends on $\alpha$. Again, by taking a larger constant we have
\[[D_v^3 f_{z_0}]_{\alpha'',Q_{1}} +  [\partial_t f_{z_0}]_{\alpha'',Q_{1}} + [\nabla_x f_{z_0}]_{\alpha'',Q_{1}} \lesssim (1+t_0^{-q}) e^{-\mu'|v_0|^2}. \]
From here, we can inductively apply Theorem \ref{t:weak-schauder}(a) and (b) to conclude $f_{z_0}\in C^{\infty}(Q_1)$. In more detail, assume that all partial derivatives $\partial_t^j \partial_x^\beta\partial_v^\eta f_{z_0}$ with
\begin{equation}\label{e:M}
 2j + 3|\beta| + |\eta| \leq M 
 \end{equation}
 exist in $C^{\alpha}(Q_1)$ for some $\alpha>0$, and that for every such partial derivative $\partial f_{z_0}$ and $z_0\in (0,T_0]\times \R^d\times \R^d$, we have
 \begin{equation}\label{e:partial}
  [\partial f_{z_0}]_{\alpha,Q_{1}} \leq C\left(1+t_0^{-q}\right) e^{-\mu'|v_0|^2},
 \end{equation}
for some $q>0$. Then Lemma \ref{l:coefficients} implies that $\overline A$ and $\overline C$ in \eqref{e:isotropic-nondivergence} satisfy 
\begin{equation}\label{e:AC}
\begin{split} 
\left[\partial\overline A\right]_{1+\alpha',Q_{1/2}} &%\lesssim r_1^{-M} (1+t_0^{-q})(1+|v_0|)^{k} 
	\lesssim (1+t_0^{-q'})(1+|v_0|)^k\\
  \left[\partial\overline C\right]_{1+\alpha',Q_{1/2}} &%\lesssim r_1^{-M+2} (1+t_0^{-q}) (1+|v_0|)^k 
  	\lesssim (1+t_0^{-q'+1}) (1+|v_0|)^\ell,
\end{split}
\end{equation}
for some $q'>0$ and $k,\ell\in \R$. Letting $\partial = \partial_t^j\partial_x^\beta\partial_v^\eta$ be a partial derivative satisfying \eqref{e:M}, we can therefore differentiate equation \eqref{e:isotropic-nondivergence} to obtain an equation for $\partial f_{z_0}$ of the form
\begin{align*}
\partial_t (\partial f_{z_0}) + v\cdot \nabla_x (\partial f_{z_0})  &= \tr(\overline A(z)\partial f_{z_0}) + \overline C(z)\partial f_{z_0} + \mathcal F(f_{z_0}(z),\overline A(z),\overline C(z)),
\end{align*} 
for some differential operator $\mathcal F$ of order at most $M$ (counted with the scaling of \eqref{e:M}). Applying Theorem \ref{t:weak-schauder}(a) and our inductive hypothesis \eqref{e:partial}, we have
\begin{align*} 
[\partial f_{z_0}]_{1+\alpha',Q_{1/2}} &\lesssim \left([\overline C(z) \partial f_{z_0}+ \mathcal F(f_{z_0}(z),\overline A(z),\overline C(z))]_{\alpha',Q_1} + |A|_{\alpha',Q_1}^p|f_{z_0}|_{0,Q_1}\right)\\
 &\lesssim \left(1+t_0^{-q''}\right)e^{-\mu'|v_0|^2},
\end{align*}
with $q''>0$. By \eqref{e:AC}, we have enough regularity of $\overline C(z)$ and $\mathcal F(f_{z_0}(z),\overline A(z),\overline C(z))$ to apply Theorem \ref{t:weak-schauder}(b):
\[[D_v^3 \partial f_{z_0}]_{\alpha'',Q_{1/4}} +  [\partial_t \partial f_{z_0}]_{\alpha'',Q_{1/4}} + [\nabla_x \partial f_{z_0}]_{\alpha'',Q_{1/4}} \lesssim  \left(1+t_0^{-q'''}\right) e^{-\mu'|v_0|^2}. \]
As above, we may replace $Q_{1/4}$ with $Q_1$ by taking a larger implied constant. Such an estimate holds for each partial derivative $\partial f_{z_0}$ satisfying \eqref{e:M}, so we have shown \eqref{e:partial} holds with some $q>0$ for $\partial_t^j\partial_x^\beta\partial_v^\eta f_{z_0}$ whenever
\[ 2j + 3|\beta| + |\eta| \leq M + 3.\]
We conclude $f_{z_0}\in C^\infty(Q_1)$ for any $z_0\in (0,T_0]\times \R^d\times \R^d$. By Proposition \ref{p:f0-f}, we have that $f\in C^\infty((0,T_0]\times \R^d\times \R^d))$ with the pointwise estimates \eqref{e:pointwise}.
\end{proof}

\appendix

\section{Bounds on the coefficients of the Landau equation}\label{s:A}

In this appendix, we collect the available bounds on the coefficients $\overline a$, $\overline b$, and $\overline c$ in the Landau equation \eqref{e:divergence} with soft potentials ($\gamma\in [-d,0)$). The estimates in Propositions \ref{p:a} and \ref{p:bc} were derived in \cite{silvestre2015landau} and \cite{cameron2017landau}. Earlier, corresponding bounds in the case $\gamma \geq 0$ were shown in \cite{desvillettes2000landau}.
\begin{proposition}\label{p:a}
Let $f:[0,T_0]\times \R^d\times\R^d\to \R_+$ satisfy the bounds \eqref{e:M0}, \eqref{e:E0}, and \eqref{e:H0}, and let $\overline a$ be defined by \eqref{e:a}. If $\gamma\in [-d,0)$, then for unit vectors $e\in \R^d$,
\begin{equation}\label{e:aij-lower}
 \overline a_{ij}(t,x,v) e_i e_j \geq c\begin{cases} (1+|v|)^{\gamma}, & e\in \mathbb S^{d-1}, \\
(1+|v|)^{\gamma+2},& e\cdot v = 0.\end{cases}
\end{equation}
If $\gamma\in [-2,0)$, then $\overline a$ satisfies the upper bound
\begin{equation}\label{e:aij-upper}
 \overline a_{ij}(t,x,v) e_i e_j \leq C\begin{cases} (1+|v|)^{\gamma+2}, &e\in \mathbb S^{d-1},\\
(1+|v|)^{\gamma},  & e\cdot v = |v|,\end{cases}
\end{equation}
and if $\gamma\in [-d,-2)$, 
\begin{equation}
\overline a_{ij}(t,x,v)e_i e_j \leq C\|f(t,x,\cdot)\|_{L^\infty(\R^d)}^{-(\gamma+2)/d}, \quad e\in \mathbb S^{d-1}.
\end{equation}
The constants $c$ and $C$ depend on $d$, $\gamma$, $m_0$, $M_0$, $E_0$, and $H_0$.
\end{proposition}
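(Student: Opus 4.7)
My plan is to start from the explicit quadratic form. Using the structure $I - \frac{w}{|w|}\otimes\frac{w}{|w|}$ in \eqref{e:a} and substituting $u = v - w$, for any unit vector $e$,
\[
\overline a_{ij}(t,x,v)\, e_i e_j = a_{d,\gamma} \int_{\R^d} |v-u|^{\gamma} \bigl(|v-u|^2 - ((v-u)\cdot e)^2\bigr) f(t,x,u) \, du.
\]
The integrand is $|v-u|^\gamma$ multiplied by the squared length of the component of $v-u$ perpendicular to $e$, so the three angular cases (arbitrary $e$, $e\perp v$, $e\parallel v$) are controlled by how small this transverse component can be.

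\textbf{Upper bounds.} For $\gamma\in[-2,0)$, the crude bound $|v-u|^\gamma(|v-u|^2 - ((v-u)\cdot e)^2)\le |v-u|^{\gamma+2}$ reduces the general estimate to controlling $\int |v-u|^{\gamma+2} f(u)\,du$, which I would split at $|u|\le |v|/2$ (using \eqref{e:M0} and the trivial bound $|v-u|\le 3|v|/2$) and $|u|>|v|/2$ (where $\gamma+2\in[0,2)$ interpolates between mass \eqref{e:M0} and energy \eqref{e:E0}), yielding $\lesssim(1+|v|)^{\gamma+2}$. For the sharper upper bound in~\eqref{e:aij-upper} when $e=v/|v|$, direct expansion gives $|v-u|^2 - ((v-u)\cdot e)^2 = |u|^2 - (u\cdot e)^2 \le |u|^2$. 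On $\{|u|\le |v|/2\}$ we then have $|v-u|\ge |v|/2$, and the integrand is bounded by $|v|^\gamma|u|^2 f(u)$, controlled by $|v|^\gamma E_0$; the region $\{|u|>|v|/2\}$ is handled similarly, splitting by whether $|v-u|\lessgtr 1$. For $\gamma\in[-d,-2)$ the kernel is singular at $w=0$, so I would split $\int|w|^{\gamma+2}f(v-w)\,dw$ at $|w|\le r$ (using $\|f\|_\infty$, which is integrable against $|w|^{\gamma+2}$ exactly because $\gamma+2+d>0$) and $|w|>r$ (using \eqref{e:M0}), then optimize at $r\sim(M_0/\|f\|_\infty)^{1/d}$ to produce the stated bound $\|f\|_\infty^{-(\gamma+2)/d}$.

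\textbf{Lower bounds.} Here the main content is a \emph{non-concentration} statement: the hydrodynamic bounds \eqref{e:M0}, \eqref{e:E0}, \eqref{e:H0} force the existence of universal $\ell,\delta,\mu>0$ such that, for every $(t,x)$ and every line $L\subset\R^d$ through the origin, the set
\[
G(t,x,L) := \bigl\{ u \in B_\ell(0) : f(t,x,u)\ge \delta,\ \text{dist}(u,L)\ge \mu \bigr\}
\]
has Lebesgue measure at least a universal constant. This is proved by contradiction: if $f$ were concentrated either in a $\mu$-neighborhood of $L$ or outside $B_\ell(0)$, the energy bound would remove the far tail, the mass lower bound $m_0$ would guarantee substantial mass in $B_\ell$, and the superlinearity of $s\log s$ (via Jensen's inequality applied on the thin slab) would force the entropy to exceed $H_0$. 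Granting this, the $(1+|v|)^\gamma$ lower bound is obtained by taking $L$ parallel to $e$ and restricting the integral to $G(t,x,L)$: there $|v-u|^\gamma\gtrsim (1+|v|)^\gamma$ and $|v-u|^2-((v-u)\cdot e)^2\gtrsim \mu^2$. The stronger $(1+|v|)^{\gamma+2}$ bound when $e\perp v$ follows from \emph{any} such $G$ (the direction of $L$ is irrelevant): for $|u|\le \ell$ and $|v|$ large we have $|v-u|\approx |v|$, and since $e\cdot v=0$ the transverse projection is $|v-u|^2-(u\cdot e)^2\ge (|v|-\ell)^2-\ell^2\gtrsim |v|^2$; for bounded $|v|$ the conclusion follows by adjusting the constant.

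\textbf{Main obstacle.} The quantitative non-concentration statement is the crux. All the kernel estimates are pointwise and essentially elementary once one knows the density cannot concentrate on a tubular neighborhood of a line. Making this quantitative with constants depending only on $d$, $\gamma$, $m_0$, $M_0$, $E_0$, $H_0$ — and in particular with the non-trivial control in the $(1+|v|)^\gamma$ case, which requires the full hyperplane freedom rather than a single ball — is where the argument of \cite{silvestre2015landau} and \cite{cameron2017landau} does the real work, via the convex inequality $s\log s\ge s\log \delta - (\delta - s)\mathbf 1_{s<\delta/2}$ combined with Markov's inequality.
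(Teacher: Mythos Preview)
The paper does not actually prove Proposition~\ref{p:a}: it is stated in Appendix~\ref{s:A} and immediately attributed to \cite{silvestre2015landau} and \cite{cameron2017landau} without argument. Your sketch is consistent with the approach in those references. The upper-bound arguments (splitting the convolution at radii comparable to $|v|$, using the identity $|v-u|^2-((v-u)\cdot e)^2=|u|^2-(u\cdot e)^2$ when $e=v/|v|$, and optimizing over a cutoff radius $r$ in the very-soft case) are exactly the computations carried out there, and you have correctly isolated the non-concentration lemma---mass, energy, and entropy together prevent $f(t,x,\cdot)$ from collapsing onto a tubular neighborhood of any line---as the substantive input for the lower bounds. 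So there is nothing to compare against in the paper itself, and your outline matches the cited proofs.
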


\begin{proposition}\label{p:bc}
Let $f$ be as in Proposition \textup{\ref{p:a}}. The coefficients $\overline b$ and $\overline c$ defined by \eqref{e:b} and \eqref{e:c} respectively, satisfy the upper bounds
\begin{equation}\label{e:b-upper}
\left|\overline b(t,x,v)\right| \leq C\begin{cases} (1+ |v|)^{\gamma+1}, &-1 \leq \gamma <0,\\[2ex]
(1+|v|)^{\gamma+1}(1+\|f\|_{L^\infty(B_{1}(v))})^{-(\gamma+1)/d}, &\dfrac{-3d-2}{d+2}\leq \gamma < -1,\\[2ex]
(1+|v|)^{-2-2(\gamma+1)/d}\left(1+\|f\|_{L^\infty(B_{1}(v))}  \right)^{-\gamma/d}, &-d \leq \gamma < \dfrac{-3d-2}{d+2},\end{cases}
\end{equation}
and 
\begin{equation}
\overline c(t,x,v) \leq C\begin{cases} (1+|v|)^\gamma(1+\|f\|_{L^\infty(B_{1}(v))})^{-\gamma/d}, &\dfrac{-2d}{d+2}\leq \gamma < 0,\\
(1+|v|)^{-2-2\gamma/d}\left(1+\|f\|_{L^\infty(B_{1}(v))}  \right)^{-\gamma/d}, &-d < \gamma < \dfrac{-2d}{d+2},\end{cases}
\end{equation}
where the constants depend on $d$, $\gamma$, $M_0$, and $E_0$.
%\[\rho = \begin{cases} 1, &|v|<2,\\
%|v|^{-2/d}, &|v|\geq 2.
%\end{cases}
%\]
\end{proposition}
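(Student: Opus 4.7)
The plan is to estimate both $\overline b$ and $\overline c$ by splitting the defining integrals into a region near the singularity of the kernel and a tail region, then balancing the two contributions via an interpolation argument that trades off the $L^\infty$ bound against the $L^1$ mass (or the second moment). After the change of variables $u = v-w$, the integrals become
\[
\overline b(t,x,v) = b_{d,\gamma}\int_{\R^d}|v-u|^\gamma(v-u)f(t,x,u)\,\dd u, \qquad \overline c(t,x,v) = c_{d,\gamma}\int_{\R^d}|v-u|^\gamma f(t,x,u)\,\dd u,
\]
which I will treat as convolutions against a singular kernel.

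First I would treat $\overline c$. For $r\in(0,1]$ to be chosen, I split the integral into $B_r(v)$ and its complement. On $B_r(v)$ the bound $f \leq \|f\|_{L^\infty(B_r(v))}$ and the local integrability of $|v-u|^\gamma$ (valid since $\gamma > -d$) give a contribution bounded by $C\|f\|_{L^\infty(B_r(v))} r^{\gamma+d}$. On the complement of $B_r(v)$, we use $|v-u|^\gamma \leq r^\gamma$ together with $\|f\|_{L^1}\leq M_0$, giving a contribution of $C M_0 r^\gamma$. Choosing $r$ to equate these two terms yields the interpolation exponent $-\gamma/d$ and produces the bound $C\|f\|_{L^\infty(B_1(v))}^{-\gamma/d}$ for bounded $|v|$. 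For large $|v|$, I would refine the tail estimate: on the subregion $\{|u|\leq |v|/2\}$ we have $|v-u|\geq |v|/2$, so the contribution is only $(|v|/2)^\gamma M_0$; the complementary subregion has small mass $O(|v|^{-2})$ by Chebyshev via the energy bound, and this improvement is what produces the gain $(1+|v|)^\gamma$ or, in the very soft case, the sharper $(1+|v|)^{-2-2\gamma/d}$ factor once one re-interpolates using the energy bound in place of the mass bound. The threshold $\gamma = -2d/(d+2)$ arises as the value at which these two interpolation schemes give the same power of $(1+|v|)$.

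The bound on $\overline b$ follows the same template, but the integrand has the extra factor $|v-u|$, so the effective singularity is $|v-u|^{\gamma+1}$. In the case $\gamma \geq -1$, this singularity is mild enough that the $L^\infty$ splitting produces an $r^{\gamma+1+d}$ piece and the tail produces an $r^{\gamma+1}$ piece with mass weight; balancing as above, together with the observation that the kernel now scales like $|v|^{\gamma+1}$ in the large-$|v|$ regime, gives the clean bound $C(1+|v|)^{\gamma+1}$ with no $L^\infty$ dependence. For $-(3d+2)/(d+2) \leq \gamma < -1$, the same interpolation is carried out but now the $L^\infty$ term is required, yielding the exponent $-(\gamma+1)/d$. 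For the most singular regime $\gamma < -(3d+2)/(d+2)$, the mass bound on the far region is replaced by the energy bound, producing the algebraic gain $(1+|v|)^{-2-2(\gamma+1)/d}$; as with $\overline c$, the transition point is determined by equating the two candidate decay rates in $|v|$.

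The main obstacle is organizing the case analysis so that all three of the available bounds on $f$ (namely $L^\infty$ locally, mass, and energy) are combined in the right proportion in each regime of $\gamma$, and verifying that the critical thresholds are precisely those stated. A secondary subtlety is that one must keep track of the local $L^\infty$ norm on $B_1(v)$ (rather than a global $L^\infty$ norm) throughout the splitting; this is natural once one localizes the optimization radius $r$ to satisfy $r \leq 1$. Since the argument is entirely pointwise in $(t,x)$, no regularity of $f$ beyond local boundedness is required, and the overall structure follows the scheme established in \cite{silvestre2015landau} and \cite{cameron2017landau}.
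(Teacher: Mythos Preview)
Your proposal is correct and follows exactly the approach used in the cited references. Note that the paper itself does not supply a proof of this proposition; it simply states that the estimates were derived in \cite{silvestre2015landau} and \cite{cameron2017landau}, and your outline reproduces the splitting-and-balancing scheme from those works, including the correct identification of the thresholds $\gamma = -2d/(d+2)$ and $\gamma = -(3d+2)/(d+2)$ as the crossover points between the mass-based and energy-based interpolations.
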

Finally, we show that when $\gamma \in [-d,-2]$, the coefficients $\overline a$ and $\overline c$ still have the appropriate decay to prove Theorem \ref{t:gaussian}, if sufficiently many moments of $f$ are finite.
\begin{lemma}\label{l:very-soft}
Let $\gamma\in [-d,-2]$, and let $f:[0,T_0]\times \R^d\times\R^d\to\R$ be a bounded function satisfying \eqref{e:M0} and \eqref{e:E0}. Assume in addition that 
\[\int_{\R^d}|v|^p f(v)\dd v \leq P_0,\]
where $p$ is the smallest integer such that $p>\dfrac{d|\gamma|}{2+\gamma+d}$. Then the upper bounds \eqref{e:aij-upper} hold, 
%for any unit vector $e\in \mathbb S^{d-1}$, $\overline a$ defined by \eqref{e:a} satisfies
%\begin{equation}\label{e:aij-very-soft}
%\overline a(t,x,v) \leq C\begin{cases} (1+|v|)^{\gamma+2}, &\dfrac{-4d-4}{d+2}\leq \gamma < -2,\\
%(1+|v|)^{-2-2(\gamma+2)/d}, &-d \leq \gamma < \dfrac{-4d - 4}{d+2},\end{cases}
%\end{equation}
%$e\cdot v = |v|$, we have the upper bound
%\begin{equation}\label{e:aij-parallel}
%\overline a(t,x,v) \leq C(1+|v|)^{\gamma}.
%\end{equation}
with constants depending on $d$, $\gamma$, $M_0$, $E_0$, $P_0$, and $\|f\|_{L^\infty([0,T_0]\times \R^{d}\times\R^d)}$. 

If, in addition, $\gamma > -d/2-1$, there is an $\eps>0$ depending on $d$ and $\gamma$ such that 
\[\overline c(t,x,v) \leq C(1+|v|)^{\gamma+2-\eps},\]
with $C$ depending on the same quantities.
\end{lemma}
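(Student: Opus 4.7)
My plan is to prove both upper bounds by substituting $u = v-w$ and splitting each integral into three regions: (i) $|u|\leq |v|/2$, (ii) $|u|>|v|/2$ with $|v-u|\geq 1$, and (iii) $|u|>|v|/2$ with $|v-u|<1$. Region (i) uses $|v-u|\geq |v|/2$ combined with $M_0$ or $E_0$. Region (ii) uses $|v-u|^\gamma, |v-u|^{\gamma+2} \leq 1$ together with the $p$-moment bound $\int_{|u|>|v|/2} f(u)\,du \leq (2/|v|)^p P_0$ from Markov's inequality. Region (iii) is where the moment hypothesis really enters: I will apply H\"older's inequality to decouple the singular kernel from $f$, then interpolate $\|f\|_{L^{a'}}^{a'} \leq \|f\|_\infty^{a'-1} \int_{|u|>|v|/2} f\,du$ to convert the tail $L^1$ decay into $L^{a'}$ decay.

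For $\overline a_{ij} e_i e_j$ with $e \in \mathbb S^{d-1}$, the integrand is bounded by $|v-u|^{\gamma+2} f(u)$. Regions (i) and (ii) produce $C|v|^{\gamma+2}$ and $C|v|^{-p}$ respectively, and the inequality $p > |\gamma|-2$ needed for the latter follows easily from the hypothesis. For region (iii), choosing $a$ just below $d/(|\gamma|-2)$ puts $|v-u|^{\gamma+2}$ in $L^a(B_1)$ and forces $a' > d/(d+2-|\gamma|)$, producing a decay factor of $|v|^{-p/a'}$; the requirement $p/a' \geq |\gamma|-2$ becomes exactly the hypothesis $p > d|\gamma|/(d+2-|\gamma|)$ as $a'$ approaches its infimum. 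For the direction $e = v/|v|$, a short computation identifies the integrand as $|u_\perp|^2 |v-u|^\gamma f(u)$, where $u_\perp$ denotes the component of $u$ perpendicular to $v$. On region (i) I use $|u_\perp|^2 \leq |u|^2$ together with $E_0$ to get $C|v|^\gamma$. On regions (ii) and (iii) I use the identity $u_\perp = (u-v)_\perp$ (since $v_\perp = 0$) to obtain $|u_\perp|^2 \leq |v-u|^2$, which reduces the integrand to $|v-u|^{\gamma+2} f$ and allows the previous argument to be re-run; the target is now $|v|^\gamma$ rather than $|v|^{\gamma+2}$, which only strengthens the required inequality to $p/a' \geq |\gamma|$, and this again matches the hypothesis in the limit.

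For $\overline c(t,x,v)$, the same three-region split with kernel $|v-u|^\gamma$ gives $C|v|^\gamma$, $C|v|^{-p}$, and $C|v|^{-p/a'}$ respectively, now with $a < d/|\gamma|$ and $a' > d/(d-|\gamma|)$. The first two are easily $\leq C(1+|v|)^{\gamma+2-\varepsilon}$ for some $\varepsilon>0$, but the third requires $p > d(|\gamma|-2)/(d-|\gamma|)$. The main obstacle is an elementary but sharp algebraic check: a direct cross-multiplication shows
\[
\frac{d|\gamma|}{d+2-|\gamma|} > \frac{d(|\gamma|-2)}{d-|\gamma|} \quad \Longleftrightarrow \quad |\gamma| < \tfrac{d}{2}+1,
\]
which is precisely the additional restriction $\gamma > -d/2 - 1$. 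This explains why that restriction appears only in the $\overline c$ estimate: the moment exponent $p$ in the hypothesis is tuned to make the H\"older step for $\overline a$ just close, and $\gamma > -d/2-1$ is exactly what is needed for the same $p$ to still close the H\"older step for $\overline c$.
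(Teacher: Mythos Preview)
Your proof is correct and takes a genuinely different route from the paper's.

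After the substitution $u = v-w$, you split uniformly into three regions based on whether $|u| \le |v|/2$ and whether $|v-u| < 1$. The paper instead works in the original variable $w$ and introduces a $\gamma$-dependent intermediate radius $r = \tfrac12|v|^{(\gamma+2)/(\gamma+2+d)}$: it bounds $f$ by $\|f\|_{L^\infty}$ on $B_r$, uses the $p$-moment on the annulus $B_{|v|/2}\setminus B_r$, and the mass outside. Your near-diagonal region~(iii) is handled by H\"older together with the interpolation $\|f\|_{L^{a'}}^{a'} \le \|f\|_{L^\infty}^{a'-1}\int f$, which makes the sharpness of the moment hypothesis very transparent: the exponent $p > d|\gamma|/(d+2-|\gamma|)$ is exactly what closes region~(iii) in the parallel direction $e = v/|v|$. (Your remark that it ``becomes exactly the hypothesis'' for general $e$ is a slight overstatement---there the requirement is only $p > d(|\gamma|-2)/(d+2-|\gamma|)$---but this is harmless since the parallel case is the binding constraint.) Your observation $u_\perp = (u-v)_\perp$ is a clean way to reduce the parallel direction to the general-$e$ integrand; the paper treats that direction separately via a H\"older step with the somewhat opaque auxiliary exponent $q = p(p-2)/(p+d)$. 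Overall, your decomposition is more uniform across the three bounds, while the paper's argument for the first bound avoids H\"older entirely at the cost of the tuned radius~$r$.
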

\begin{proof}
For any $e\in \mathbb S^{d-1}$, the formula \eqref{e:a} implies
\begin{align*}
\overline a_{ij}(t,x,v)e_i e_j &= a_{d,\gamma} \int_{\R^d} \left(1 - \left(\frac {w\cdot e}{|w|}\right)^2\right)|w|^{\gamma+2} f(v-w)\dd w \\
&\lesssim \int_{\R^d} |w|^{\gamma+2} f(v-w)\dd w.
\end{align*}
%Since $\gamma+2 \in [d+2,0)$, this expression is bounded by $\overline c(t,x,v)$ up to a constant, and \eqref{e:aij-very-soft} follows from Proposition \ref{p:bc}.
Let $r := \frac 1 2 |v|^{(\gamma+2)/(\gamma+2+d)}$, $R = |v|/2$, and define
\begin{align*}
I_1 = \int_{B_{r}} |w|^{\gamma+2}  &  f(v-w)\dd w, \quad I_2 = \int_{B_{R}\setminus B_{r} } |w|^{\gamma+2} f(v-w)\dd w,
\\& \quad I_3 = \int_{\R^d \setminus B_{R}} |w|^{\gamma+2} f(v-w)\dd w.
\end{align*}
We have
\[I_1 \lesssim \|f\|_{L^\infty} r^{d+\gamma+2} \lesssim |v|^{\gamma+2},\]
\begin{align*}
I_2 \lesssim r^{\gamma+2} |v|^{-p}\int_{B_{R}} |v-w|^p f(v-w) \dd w &\lesssim P_0 |v|^{-p+(\gamma+2)^2/(d+\gamma+2)}.
\end{align*}
Our choice of $p$ implies $-p< d(\gamma+2)/(d+\gamma+2)$, so that $I_2 \lesssim |v|^{\gamma+2}$. Finally, for $|w|\geq |v|/2$, we have $|w|^{\gamma+2} \lesssim  |v|^{\gamma+2}$, and
\begin{align*}
I_3 \lesssim |v|^{\gamma+2} \int_{\R^d\setminus B_R}f(v-w)\dd w \leq M_0|v|^{\gamma+2}.
 \end{align*}

If $e$ is parallel to $v$, then proceeding as in \cite[Lemma 2.1]{cameron2017landau}, 
 we have
\begin{align*}
 \int_{\R^d} \left(1 - \left(\frac {w\cdot e}{|w|}\right)^2\right)|w|^{\gamma+2} f(v-w)\dd w & = \int_{\R^d} \left(1 - \left(\frac {(v-z)\cdot e}{|v-z|}\right)^2\right)|v-z|^{\gamma+2} f(z)\dd z\\
&=  \int_{\R^d} \left(|v-z|^2 - \left(|v|-z\cdot e\right)^2\right)|v-z|^{\gamma} f(z)\dd z\\
&=  \int_{\R^d} \left(|z|^2 - (z\cdot e)^2\right)|v-z|^{\gamma} f(z)\dd z\\
&= \int_{\R^d} |z|^2 \sin^2\theta|v-z|^{\gamma} f(z)\dd z,
\end{align*}
where $\theta$ is the angle between $v$ and $z$. We may assume $|v|>2$. Let $R = |v|/2$ and $q = \dfrac{p(p-2)}{p+d}$. By our choice of $p$, we have $(\gamma+2)p/q > -d$. If $z\in B_R(v)$, then $|\sin \theta| \leq |v-z|/|v|$, $|z|\lesssim |v|$, and
\begin{align*}
\int_{B_R(v)} |z|^2\sin^2\theta|v-z|^\gamma f(z)\dd z &\leq |v|^{-2}\int_{B_R(v)} |z|^2 |v-z|^{\gamma+2}f(z)\dd z\\
&\leq |v|^{-p+q}\|f\|_{L^\infty}^{q/p}\int_{B_R(v)} |z|^{p-q} f(z)^{(p-q)/p}|v-z|^{\gamma+2} \dd z\\
&\lesssim |v|^{-p+q}\left(\int_{B_R(v)}|z|^pf(z)\dd z\right)^{(p-q)/p}\left(\int_{B_R(v)} |v-z|^{(\gamma+2)p/q}\dd z\right)^{q/p}\\
&\lesssim |v|^{-p+q} E_0^{(p-q)/p} \left(|v|^{(\gamma+2)p/q + d}\right)^{q/p} \lesssim |v|^{\gamma}.
\end{align*}
If $|v-z| \geq R=|v|/2$, then $|v-z|^\gamma \lesssim |v|^\gamma$, and we have
\begin{align*}
\int_{\R^d\setminus B_R(v)} |z|^2 \sin^2\theta|v-z|^\gamma f(z)\dd z &\lesssim  |v|^{\gamma} \int_{\R^d\setminus B_R(v)} |z|^2 f(z)\dd z \lesssim E_0 |v|^{\gamma}.
\end{align*}

For $\overline c$, our choice of $p$ and the restriction that $\gamma > -d/2 -1$ implies there is an $\eps>0$ such that $-p + \dfrac{\gamma(\gamma+2-\eps)}{d+\gamma} < \gamma + 2$. Define $r = \frac 1 2 |v|^{(\gamma+2-\eps)/(d+\gamma)}$, $R = |v|/2$, and $I_1$, $I_2$, $I_3$ as above. The same method implies $I_1 + I_2 + I_3 \lesssim |v|^{\gamma+2-\eps}$.
%The same proof applied to $\overline a$ above, with $\gamma$ replacing $\gamma+2$, establishes that 
\end{proof}

\section*{Ethical Statement}

\noindent {\bf Funding:} Both authors were partially supported by National Science Foundation grant DMS-1246999. CH was partially supported by NSF grant DMS-1907853. SS was partially supported by a Ralph E. Powe Award from ORAU.

\noindent {\bf Conflict of interest:} The authors declare that they have no conflicts of interest.

\bibliographystyle{abbrv}

%\bibliography{landau}

\end{document}